\newif\ifarxiv
\arxivtrue
\newif\ifpublish
\publishfalse
\ifpublish
\input{./prelude-asl.tex}
\else
\documentclass[a4paper]{article}
\usepackage[utf8]{inputenc} \usepackage[usenames,dvipsnames]{xcolor}
\usepackage[leqno]{amsmath} \usepackage{amssymb}
\usepackage{euscript} 
\usepackage{mathrsfs}
\usepackage{bbm}
\usepackage[amsmath,thmmarks]{ntheorem} 
\usepackage[ntheorem]{empheq}
\usepackage[normalem]{ulem} \usepackage[colorlinks=true,linkcolor=blue,anchorcolor=blue,citecolor=blue,
     		filecolor=blue,urlcolor=blue]{hyperref}
\usepackage{stmaryrd}\SetSymbolFont{stmry}{bold}{U}{stmry}{m}{n}\usepackage{bussproofs}\EnableBpAbbreviations
\usepackage{array}
\usepackage{tabularx}
\usepackage{enumitem}
\usepackage[small]{titlesec}
\usepackage[hmarginratio={1:1},top=3.1cm,bottom=3.85cm,textwidth=390pt]{geometry}
\usepackage[shortcuts]{extdash}
\usepackage{booktabs}
\usepackage[nameinlink,noabbrev,capitalize]{cleveref}

\usepackage{tocloft}
\setlength{\cftaftertoctitleskip}{0pt}
\setlength{\cftbeforesecskip}{0.4em}
\setlength{\cftsecnumwidth}{2em}
\setlength{\cftsubsecnumwidth}{2.5em}
\setlength{\cftsecindent}{1em}

\usepackage{xspace}

\DeclareSymbolFont{sansops}{OT1}{\sfdefault}{m}{n}
\SetSymbolFont{sansops}{bold}{OT1}{\sfdefault}{b}{n}
\makeatletter
\renewcommand\operator@font{\mathgroup\symsansops}
\makeatother

\DeclareSymbolFont{bbold}{U}{bbold}{m}{n} \DeclareSymbolFontAlphabet{\mathbbold}{bbold}
\DeclareMathAlphabet{\mathpzc}{OT1}{pzc}{m}{it}

\numberwithin{equation}{section}
\setlist[enumerate]{label=(\arabic*)}

\newcommand{\dito}{\rightarrowtriangle}
\newcommand{\sco}{\,,\,}

\newcommand{\bbtwo}{\mathbbold{2}}

\newcommand{\Spaces}{\EuScript{S}}

\newcommand{\LFP}{\mathsf{LFP}}

\newcommand{\angs}[1]{\langle #1\rangle}

\newcommand{\pit}{\pi_2}

\newcommand{\mcc}{\mathcal{C}}

\newcommand{\mcj}{\mathcal{J}}
\newcommand{\mcl}{\mathcal{L}}
{}\newcommand{\mcs}{\mathcal{S}}
\newcommand{\mct}{\mathcal{T}}

\newcommand{\mfx}{\mathfrak{X}}

\newcommand{\FP}{{\mathsf{FP}}}
\newcommand{\FL}{{\mathsf{FL}}}

\newcommand{\catc}{\mathbb{C}}
\newcommand{\catj}{\mathbb{J}}

\newcommand{\comp}{\mathsf{p}}

{}\newcommand{\msep}{\mathrel|}		\newcommand{\op}{^\mathsf{op}}				\newcommand{\id}{\mathrm{id}}				

\newcommand{\ap}{\mathclose\cdot}			\newcommand{\qdot}{\,.\,}				

\newcommand{\setof}[2]{\{#1\msep #2\}}
\newcommand{\fami}[2]{(#1)_{#2}}

\newcommand{\N}{\mathbb{N}}

\newcommand{\too}{\longrightarrow}
\newcommand{\stoo}{\;\longrightarrow\;}
\newcommand{\simplies}{\;\Longrightarrow\;}
\newcommand{\mono}{\rightarrowtail}
\newcommand{\incl}{\hookrightarrow}
\newcommand{\epi}{\twoheadrightarrow}

\newcommand{\abs}[1]{\lvert#1\rvert}

\newcommand{\adj}{\dashv}
\newcommand{\ent}{\vdash}
\newcommand{\sent}{\;\vdash\;}

\def\signed #1{{\leavevmode\unskip\nobreak\hfil\penalty50\hskip2em
  \hbox{}\nobreak\hfil(#1)\parfillskip=0pt \finalhyphendemerits=0 \endgraf}}

\newsavebox\mybox

\newcommand{\gcons}{{\textstyle\int}}

\newcommand{\mor}{\mathsf{mor}}

\newcommand{\lrfami}[2]{\left(#1\right)_{#2}}

\newcommand{\image}{\mathsf{im}}
\newcommand{\subs}{\subseteq}

\newcommand{\classl}{\mathcal{L}}
\newcommand{\classr}{\mathcal{R}}

\newcommand{\fa}{\forall}
\newcommand{\ex}{\exists}

\makeatletter
\DeclareRobustCommand\onedot{\futurelet\@let@token\@onedot}
\def\@onedot{\ifx\@let@token.\else.\null\fi\xspace}
\newcommand{\ie}{i.e\onedot}

\newcommand{\aka}{a.k.a\onedot}
\newcommand{\wrt}{w.r.t\onedot}
\newcommand{\llp}{l.l.p\onedot}
\newcommand{\rlp}{r.l.p\onedot}

\newcommand{\ax}[1]{\AXC{$#1$}}
\newcommand{\ui}[1]{\UIC{$#1$}}
\newcommand{\bi}[1]{\BIC{$#1$}}
\newcommand{\ti}[1]{\TIC{$#1$}}

\newcommand{\drap}{\DP}

\let\lim\relax\DeclareMathOperator*{\lim}{\mathsf{lim}}
\DeclareMathOperator*{\colim}{\mathsf{colim}}
\newcommand{\bracetext}[1]{\{\text{#1}\}}
\newcommand{\braces}[1]{\{#1\}}

\DeclareFontFamily{U}{min}{}
\DeclareFontShape{U}{min}{m}{n}{<-> udmj30}{}
\newcommand\yo{\!\text{\usefont{U}{min}{m}{n}\symbol{'210}}}

\newcommand{\wfs}{w.f.s\onedot}
\newcommand{\locfp}{locally finitely presentable\xspace}
\newcommand{\lfp}{l.f.p\onedot}

\newcommand{\qtext}[1]{\quad\text{#1}\quad}

\newcommand{\pnc}{\pgfmatrixnextcell}
\newcommand{\oo}[1]{$\infty$\nobreakdash-\hspace{0pt}}

\newenvironment{inumerate}{\begin{enumerate}[label=\normalfont(\roman*)]}{\end{enumerate}}

\newcommand{\classe}{\mathsf{E}}
\newcommand{\classf}{\mathsf{F}}
\newcommand{\wfsef}{(\classe,\classf)}
{}\newcommand{\h}{\text{-}}
\newcommand{\clans}{\mcs}
\newcommand{\clant}{\mct}
{}\newcommand{\Alg}{\mathsf{Alg}}
\newcommand{\Set}{\mathsf{Set}}
\newcommand{\Cat}{\mathsf{Cat}}
\renewcommand{\AA}{\mathbb{A}}
\newcommand{\CC}{\mathbb{C}}
\newcommand{\DD}{\mathbb{D}}
\newcommand{\II}{\mathbb{I}}
\newcommand{\JJ}{\mathbb{J}}

\newcommand{\PP}{\mathbb{P}}
\newcommand{\RR}{\mathbb{R}}

\newcommand{\TT}{\mathbb{T}}

\makeatletter
\newcommand*{\twoheadrightsquigarrow}{\rightsquigarrow\joinrel\mathrel{\mathpalette\@twoheadrightsquigarrow\relax}}
\newcommand*{\@twoheadrightsquigarrow}[2]{\clipbox{{.7\width} 0pt 0pt {-.2\height}}{$\m@th#1\rightsquigarrow$}}

\newcommand{\congto}{\stackrel{\cong}{\to}}
\newcommand{\longsimto}{\stackrel{\simeq}{\longrightarrow}}
\newcommand{\Pos}{\mathsf{Pos}}
\newcommand{\idl}[2]{{#1}_{\leq #2}}
\newcommand{\sidl}[2]{{#1}_{< #2}}
\newcommand{\setmin}{{\setminus}}
\newcommand{\Lex}{\mathsf{Lex}}
\newcommand{\DLat}{\mathsf{DLat}}

\DeclareFontFamily{U} {MnSymbolC}{}
\DeclareFontShape{U}{MnSymbolC}{m}{n}{
  <-6> MnSymbolC5
  <6-7> MnSymbolC6
  <7-8> MnSymbolC7
  <8-9> MnSymbolC8
  <9-10> MnSymbolC9
  <10-12> MnSymbolC10
  <12-> MnSymbolC12}{}
\DeclareFontShape{U}{MnSymbolC}{b}{n}{
  <-6> MnSymbolC-Bold5
  <6-7> MnSymbolC-Bold6
  <7-8> MnSymbolC-Bold7
  <8-9> MnSymbolC-Bold8
  <9-10> MnSymbolC-Bold9
  <10-12> MnSymbolC-Bold10
  <12-> MnSymbolC-Bold12}{}
\DeclareSymbolFont{MnSyC} {U} {MnSymbolC}{m}{n}
\DeclareMathSymbol{\smalltriangleright}{\mathbin}{MnSyC}{72}
\newcommand{\codito}{\mathrel{\smalltriangleright}\mathrel{\!}\rightarrow}

\newcommand{\Ty}{\mathsf{Ty}}

\newcommand{\x}{\times}
\newcommand{\Mon}{\mathsf{Mon}}

\newcommand{\Fam}{\mathsf{Fam}}
\newcommand{\diag}[1]{[#1,\Set]}
\newcommand{\psh}[1]{\widehat{#1}}
\newcommand{\ini}{{i\in I}}
\newcommand{\z}[1]{$0$\nobreakdash-\hspace{0pt}}
\renewcommand{\^}[1]{\widehat{#1}}
\newcommand{\Mod}{\mathsf{Mod}}
\newcommand{\ox}{\otimes}
\newcommand{\opsimeq}{\stackrel{\mathsf{op}}{\simeq}}
\newcommand{\coclanc}{\mathcal{C}}
\newcommand{\cocland}{\mathcal{D}}

\newcommand{\czext}{compact $0$-ex\-ten\-sion\xspace}
\newcommand{\czexts}{compact $0$-ex\-ten\-sions\xspace}
\newcommand{\zext}{$0$-ex\-ten\-sion\xspace}
\newcommand{\zexts}{$0$-ex\-ten\-sions\xspace}
\newcommand{\fullmap}{\epi}
\newcommand{\extension}{\codito}
\newcommand{\cac}{clan-algebraic category\xspace}
\newcommand{\Clan}{\mathsf{Clan}}
\newcommand{\Kan}{\mathsf{Kan}}
\newcommand{\ClanAlg}{\mathsf{ClanAlg}}
\newcommand{\efcat}{$\wfsef$-category\xspace}
\newcommand{\efcats}{$\wfsef$-categories\xspace}
\newcommand{\EFCat}{\mathsf{EFCat}}
\newcommand{\czer}[1]{\mathfrak C(#1)}
\newcommand{\catcocont}{\mathsf{CoCont}}
\newcommand{\sm}{}
\newcommand{\cc}{_{\mathsf{cc}}}
\renewcommand{\dag}{_\dagger}
\newcommand{\CoClan}{\mathsf{CoClan}}
\newcommand{\efl}{\EuScript L}
\newcommand{\efm}{\EuScript M}

\newcommand{\clop}{\czer{\efl}\op}
\newcommand{\clopalg}{\clop\h\Mod}
\newcommand{\FC}{\mathsf{FC}}

\newcommand{\copalg}{\coclanc\op\!\h\Alg}
\newcommand{\fcc}{finite $\coclanc$-complex\xspace}
\newcommand{\fccs}{finite $\coclanc$-complexes\xspace}
\newcommand{\FCC}{\FC(\coclanc)}
\newcommand{\PD}{{(P,D)}}
\newcommand{\QE}{{(Q,E)}}
\newcommand{\hmod}{\h\Mod}

\newcommand{\PCC}{\widehat{\CC}}
\newcommand{\Colim}{\mathsf{Colim}}
\renewcommand{\comp}{\mathsf{comp}}
\newcommand{\CoMod}{\mathsf{CoMod}}
\renewcommand{\Lex}{\FL}
\newcommand{\ecxt}{\ast} \newcommand{\smod}{\clans\h\Mod}
\newcommand{\tmod}{\clant\!\h\Mod}
\newcommand{\dep}[1]{(#1)}

\newcommand{\El}{\mathsf{El}} 

\newcommand{\nerve}{_N}
\newcommand{\reali}{_\otimes}

\newcommand{\codisplay}{codisplay\xspace}
\newcommand{\sOA}{_{\mathsf{OA}}}
\newcommand{\sO}{_{\mathsf{O}}}
\newcommand{\sA}{_{\mathsf{A}}}

\newtheorem{theorem}{Theorem}[section]
  \AtBeginEnvironment{theorem}{\setlist[enumerate]{label=\normalfont(\roman*)}}
\newtheorem{proposition}[theorem]{Proposition}
  \AtBeginEnvironment{proposition}
        {\setlist[enumerate]{label=\normalfont(\roman*)}}
\newtheorem{lemma}[theorem]{Lemma}
  \AtBeginEnvironment{lemma}{\setlist[enumerate]{label=\normalfont(\roman*)}}

  \AtBeginEnvironment{conjecture}
        {\setlist[enumerate]{label=\normalfont(\roman*)}}
\newtheorem{corollary}[theorem]{Corollary}
  \AtBeginEnvironment{corollary}
        {\setlist[enumerate]{label=\normalfont(\roman*)}}

  \AtBeginEnvironment{claim}{\setlist[enumerate]{label=\normalfont(\roman*)}}
\theorembodyfont{\normalfont}
\theoremsymbol{\ensuremath{\diamondsuit}}

  \AtBeginEnvironment{exercise}{\setlist[enumerate]{label=(\alph*)}}
\newtheorem{remark}[theorem]{Remark}
  \AtBeginEnvironment{remark}{\setlist[enumerate]{label=(\alph*)}}

  \AtBeginEnvironment{remnot}{\setlist[enumerate]{label=(\alph*)}}

  \AtBeginEnvironment{notation}{\setlist[enumerate]{label=(\alph*)}}

  \AtBeginEnvironment{terminology}
        {\setlist[enumerate]{label=\normalfont(\alph*)}}
\newtheorem{remarks}[theorem]{Remarks}
  \AtBeginEnvironment{remarks}{\setlist[enumerate]{label=(\alph*)}}
\newtheorem{example}[theorem]{Example}
  \AtBeginEnvironment{example}{\setlist[enumerate]{label=(\alph*)}}
\newtheorem{examples}[theorem]{Examples}
  \AtBeginEnvironment{examples}{\setlist[enumerate]{label=(\alph*)}}

  \AtBeginEnvironment{thought}{\setlist[enumerate]{label=(\alph*)}}

  \AtBeginEnvironment{convention}{\setlist[enumerate]{label=(\alph*)}}

  \AtBeginEnvironment{conventions}{\setlist[enumerate]{label=(\alph*)}}
\newtheorem{void}[theorem]{$\!\!$}
  \AtBeginEnvironment{void}{\setlist[enumerate]{label=(\alph*)}}

\newtheorem{definition}[theorem]{Definition}
  \AtBeginEnvironment{definition}{\setlist[enumerate]{label=(\roman*)}}
\theoremsymbol{\ensuremath{_\square}}
\qedsymbol{\ensuremath{_\square}}
\theoremstyle{nonumberplain}
\theoremheaderfont{\itshape}
\newtheorem{proof}{Proof.}

\theorembodyfont{\itshape}
\theoremheaderfont{\normalfont\bfseries}

\makeatletter
\def\namedlabel#1#2{\begingroup
    #2\def\@currentlabel{#2}\phantomsection\label{#1}\endgroup
}
\makeatother

\ifarxiv
\usepackage{tikz-cd}
\newenvironment{mytikzcd}{\begin{tikzcd}}{\end{tikzcd}}
\newcommand{\tikzsetnextfilename}[1]{}
\else
\usepackage{tikz,etoolbox,environ}
\usetikzlibrary{cd,external}
\tikzexternalize
\tikzsetexternalprefix{diagrams/}
\def\temp{&} \catcode`&=\active \let&=\temp
\newcommand{\mytikzcdcontext}[2]{
  \begin{tikzpicture}[baseline=(maintikzcdnode.base)]
    \node (maintikzcdnode) [inner sep=0, outer sep=0] {\begin{tikzcd}[#2]
        #1
    \end{tikzcd}};
  \end{tikzpicture}}
\NewEnviron{mytikzcd}[1][]{\def\myargs{#1}\edef\mydiagram{\noexpand\mytikzcdcontext{\expandonce\BODY}{\expandonce\myargs}}\mydiagram }
\fi

\newcommand{\Subsection}{Subsection~}
\newcommand{\Section}{Section~}
 \fi

\title{Duality for clans: an extension of Gabriel--Ulmer Duality}
\author{Jonas Frey}
\ifpublish
\revauthor{Frey, Jonas}
\address{Université Sorbonne Paris Nord, CNRS, Laboratoire d'Informatique de
Paris Nord, LIPN, F-93430 Villetaneuse, France}
\fi

\begin{document} 

\maketitle

\begin{abstract}
\emph{Clans} are categorical representations of generalized algebraic theories
that contain more information than the finite-limit categories associated to the
locally finitely presentable categories of models via Gabriel--Ulmer duality.
Extending Gabriel--Ulmer duality to account for this additional information, we
present a duality theory between clans and locally finitely presentable
categories equipped with a weak factorization system of a certain kind.
\end{abstract}

\setcounter{tocdepth}{1}\tableofcontents

\section{Introduction}\label{se:intro}

\emph{Gabriel--Ulmer duality}~\cite{gabriel1971lokal} is a contravariant
biequivalence 
\begin{equation*}
\Lex\;\opsimeq\;{\LFP}
\end{equation*}
between the $2$-category $\Lex$ of \emph{small finite-limit categories}, and the
$2$-category $\LFP$ of \emph{locally finitely presentable categories}, i.e.\
locally small cocomplete categories admitting a dense set of compact (a.k.a.\
finitely presentable) objects. The duality assigns to every small finite-limit
category $\mcc$ the category $\Lex(\mcc,\Set)$ of finite-limit preserving
functors to $\Set$, and conversely it associates to every locally finitely
presentable $\mfx$ the category $\LFP(\mfx,\Set)$ of finitary right adjoints
to $\Set$, which is equivalent to the opposite of the full subcategory
$\comp(\mfx)\subs\mfx$ of compact objects\footnote{Strictly speaking we have to
choose a small category which is \emph{equivalent} to $\comp(\mfx)\op$, since
the latter is only \emph{essentially small} in general.}.

We view Gabriel--Ulmer duality as a \emph{theory-model duality}: small
finite-limit categories $\mcc$ are viewed as theories (which we call
`finite-limit theories'), and---in the spirit of Lawverian functorial
semantics~\cite{lawvere1963functorial}---the functor category $\Lex(\mcc,\Set)$
is viewed as the category of \emph{models} of the finite-limit theory $\mcc$. 

It is well known that finite-limit theories are equally expressive as various
syntactically defined classes of theories, including
\begin{enumerate}
\item Freyd's \emph{essentially algebraic theories}~\cite{freyd1972aspects}, which permit a controlled form of partiality, \item Cartmell's \emph{generalized algebraic theories
(GATs)}~\cite{cartmell1978generalised,cartmell1986generalised}, which extend algebra by `dependent sorts', \item Johnstone's \emph{cartesian
theories}~\cite[Definition~D1.3.4]{elephant2}, which permit a limited form of existential quantification, 
and
\item\label{it:last} Palmgren--Vickers' \emph{Partial Horn
theories}~\cite{palmgren2007partial}, which are based on a calculus of first
order logic with partial terms and also admit relation symbols, 
\end{enumerate}
in the sense that for any theory $\TT$ from one of these classes, the category
$\TT\h\Mod$ of models is locally finitely presentable, and conversely for every
locally finitely presentable category $\mfx$ there exists a theory from that
class whose category of models is equivalent to $\mfx$. While from a certain
perspective this means that the classes (1)--\ref{it:last} of theories are all
equivalent to finite-limit theories, the syntactic representations of theories
contain additional information that is not reflected in the categories of
models, nor in the finite-limit theories. This `abstracting away' of syntactic
details is typically viewed as a \emph{strength} of the categorical/functorial
approach, and indeed in mathematical practice we no more want to distinguish
between the classical axiomatization of groups and Higman--Neumann's
axiomatization in terms of one operation and one
equation~\cite{higman1952groups}, than we want to distinguish between the
symmetric group $S_3$ and the dihedral group $D_3$.

However, it turns out that in the case of GATs, the theories contain additional
information that is not reflected in the corresponding finite-limit category,
but nevertheless goes beyond mere syntactic details. This information is related
to the structure of \emph{sort dependency} in the theories, and we show here
that it is reflected by certain weak factorization systems on the l.f.p.\
categories of models. For example, the $2$-sorted theory of graphs
\begin{align*}
            & \sent V
\\          & \sent E 
\\ x : E    & \sent s(x):V
\\ x : E    & \sent t(x):V
\end{align*}
with sorts $V$ and $E$ and source and target operations $s$ and $t$, has the
same category of models as the dependently sorted theory 
\begin{align*} & \sent V
\\ x\, y : V & \sent E(x,y)
\end{align*}
with a base sort $V$ of vertices and a dependent sort $E$ of edges (and
\emph{no} operations). But the syntactic categories of the theories are
different, and this is reflected by different weak factorization systems on the
categories of models: in the first case, the w.f.s.\ is cofibrantly generated by
the initial inclusions $\varnothing\incl (\bullet)$ and
$\varnothing\incl(\bullet{\to}\bullet)$ of the free graphs on one vertex and one
edge, respectively, whereas in the second case the w.f.s.\ is generated by the
inclusions $\varnothing\incl(\bullet)$ and
$(\bullet{\phantom{{\to}}}\bullet)\incl(\bullet{\to}\bullet)$. The non-trivial
domain of the second generator reflects the dependency of the sort of edges on
the sort of vertices.

\medskip

Concretely, the present work gives in Theorem~\ref{thm:main} a duality 
\begin{equation}\label{eq:bieq-clanalg}
\Clan\cc\;\opsimeq\;\ClanAlg
\end{equation}
of $2$-categories which extends Gabriel--Ulmer duality by incorporating this
additional structure. On the right we have the $2$-category of
\emph{clan-algebraic categories}, which are locally finitely presentable
categories equipped with a well-behaved kind of weak factorization system
(Definition \ref{def:cac}), while on the left we have a $2$-category of
Cauchy-complete \emph{clans} (Definition \ref{def:clan}). These are categorical
representations of GATs which can be viewed as a non-strict variant of
Cartmell's \emph{contextual categories} (Definition \ref{def:cxtcat}), and are
given by small categories equipped with a class of `display maps' representing
type families, admitting certain (but not all) finite limits. 

Besides extending Gabriel--Ulmer duality, the duality~\eqref{eq:bieq-clanalg}
recovers Adámek--Rosický--Vitale's duality between \emph{algebraic theories} and
\emph{algebraic categories}~\cite[Theorem 9.15]{adamek2010algebraic} as a
special case, and the latter duality was in fact inspirational for the present
work. See Remark~\ref{rems:char-algcat}\ref{rems:char-algcat-arv}.

\subsection{Structure of the paper}

\Section\ref{se:clans} introduces clans (Definition~\ref{def:clan}), the category of models of a clan $\tmod$ (Definition~\ref{def:tmod}), and the extension--full weak factorization system on models
(Definition~\ref{def:wfsef}). 

\Section\ref{se:comod-ump} gives a characterization of $\tmod$ as a kind of
cocompletion of $\clant\op$ (Theorem~\ref{thm:univ-prop-tmod}), and uses this to give presentations of slice categories $\tmod/A$, and of
coslice categories $H(\Gamma)/\tmod$ under representable models, as categories
of models of derived clans (Propositions \ref{prop:repr-coslice-tmod} and
\ref{prop:slice-tmod}).

\Section\ref{se:efcats} introduces in Definition~\ref{def:efcat} the auxiliary
notion of \emph{\efcat} (a l.f.p.\ category with a w.f.s.\ $\wfsef$), and shows
that the mapping $\clant\mapsto\tmod$ gives rise to a contravariant $2$-functor
from clans to \efcats which admits a left biadjoint
(Proposition~\ref{prop:biadj}).

\Section\ref{se:cc} shows that this biadjunction is idempotent, and that its
fixed points in clans are precisely the \emph{Cauchy complete} clans
(Definition~\ref{def:cauchy}). For this we use the notion of \emph{flat model}
(Definition~\ref{def:flat-model}), and the \emph{fat small object argument}, a
Corollary of which we state in Corollary~\ref{cor:fat-small}, but whose
systematic treatment we defer to Appendix~\ref{se:fsoa}.
Lemma~\ref{lem:compact-arrows} is an argument about compact objects in coslice
categories which was not found in the literature.

\Section\ref{se:cac} characterizes the fixed points of the biadjunction among
\efcats as \emph{clan-algebraic categories}, which are \efcats satisfying a
density and an exactness condition (Definition~\ref{def:cac}). The
characterization is given by Theorems~\ref{thm:tmod-clanalg} and
\ref{thm:clanalg-equiv}, where the proof of the latter requires a quite lot of
machinery including a Reedy-like resolution argument. As a consequence, we
obtain our main result in Theorem~\ref{thm:main}. As an application,
\Subsection\ref{suse:caws-cat} gives additional clan-algebraic w.f.s.s on the
example $\Cat$, which by the duality result correspond to additional
clan-representations of $\Cat$. 

\Section\ref{se:counterexample} contains a common counterexample to two natural
questions about clan-algebraic w.f.s.s, and \Section\ref{se:higher-types} 
discusses $\infty$-models of clans in higher types.

Appendix~\ref{se:lfp-soa} contains basic facts about locally finitely
presentable categories, weak factorization systems, and Quillen's small-object
argument, and Appendix \ref{se:gats} is an informal introduction to Cartmell's
generalized algebraic theories.

Finally, Appendix~\ref{se:fsoa} contains a careful development of the \emph{fat
small object argument} for clans. The fat small object is a variant of Quillen's
small object argument due to Makkai, Rosický, and Vokrinek~\cite{makkai2014fat}
(based on ideas by Lurie), which allows a more fine grained analysis of the
process of saturating a class of maps. We use it to show that \emph{\zexts are
flat} (Corollary~\ref{cor:zext-flat}), which is needed in Sections~\ref{se:cc}
and \ref{se:cac}.

\subsection{Acknowledgements}

Thanks to Steve Awodey, Andrew Swan, Fernando Larrain, and especially to Mathieu
Anel for many discussions on the topic of this paper. Thanks to Reid Barton for
telling me about the fat small object argument. Thanks to Benjamin Steinberg for
locating the reference~\cite{head1982/83expanded} for me after I asked about it
on MathOverflow\footnote{\url{https://mathoverflow.net/a/90747/51432}}. Thanks
to Kian Cho for feedback on the manuscript and catching many typos.

This material is based upon work supported by the Air Force Office of Scientific
Research under award number FA9550-20-1-0305, and the Army Research Office under
award number W911NF-21-1-0121.

\section{Clans}\label{se:clans}

\begin{definition}\label{def:clan} A \emph{clan} is a small category $\clant$
with a distinguished class $\clant\dag$
of arrows called \emph{display maps}, such that:
\begin{enumerate}
\item Pullbacks of display maps along arbitrary maps exist and are display maps,
i.e.\  if $p:\Gamma'\dito\Gamma$ is a display map and $s:\Delta\to\Gamma$ is
arbitrary, then there exists a pullback square
\vspace{-1.5ex}
\begin{equation}\label{eq:display-pullback}
\tikzsetnextfilename{p2luowhb}
\begin{mytikzcd}
    |[label={[overlay, label distance=-2.5mm]-45:\lrcorner}]|
    \Delta'  \ar[r, "s'"]
                \ar[-{Triangle[open]},d, "q"']
\pnc   \Gamma'  \ar[-{Triangle[open]},d, "p"]
\\    \Delta    \ar[r, "s"]
\pnc   \Gamma
\end{mytikzcd}
\end{equation}
where $q$ is a display map.
\item Isomorphisms and compositions of display maps are display maps.
\item $\clant$ has a terminal object, and terminal projections are display maps.
\end{enumerate}
A \emph{clan morphism} is a functor between clans which preserves display maps,
pullbacks of display maps, and the terminal object. We write $\Clan$ for the
$2$-category of clans, clan-morphisms, and natural transformations.
\end{definition}
\begin{remarks}
\begin{enumerate}
\item Definition \ref{def:clan} (apart from the smallness condition), and the term
`display map', were introduced by Taylor in his
thesis~\cite[4.3.2]{taylor1987recursive}, the explicit link to Cartmell's work
was made in his textbook~\cite[Chapter~VIII]{taylor1999practical}. The name
`clan' is due to Joyal~\cite[Definition~1.1.1]{joyal2017notes}.
\item Following Cartmell, we use the arrow symbol $\dito$ for display maps.
\item We have defined clans to be \emph{small} by default, since this fits with
our point of view of clans as theories, and makes the duality theory work.

However, it is also reasonable to consider non-small, `semantic' clans, and we
will mention them occasionally (e.g.\ in
Example~\ref{ex:clan-examples}\ref{ex:clan-examples:kan} below), using the term
\emph{large clan} in this case.
\end{enumerate}
\end{remarks} 

\begin{examples}\label{ex:clan-examples}
\begin{enumerate}
\item\label{ex:clan-examples:fl-clan}
Small finite-limit categories can be viewed as clans where \emph{all}
morphisms are display maps. We call such clans \emph{finite-limit clans}.
\item\label{ex:clan-examples:fp-clan} Small finite-product categories can be
viewed as clans where the display maps are the morphisms that are (isomorphic
to) product projections. We call such clans \emph{finite-product clans}.
\item\label{ex:clan-examples:kan} $\Kan$ is the \emph{large} clan whose
underlying category is the full subcategory of the category $[\Delta\op,\Set]$
of simplicial sets on \emph{Kan complexes}, and whose display maps are the
\emph{Kan fibrations}.
\item\label{ex:clan-examples:gat} The syntactic category of every
\emph{generalized algebraic theory} in the sense of
Cartmell~\cite{cartmell1978generalised,cartmell1986generalised} is a clan. This
is explained in \Section\ref{se:gats}, and we discuss the example of the clan
for categories in greater detail in \Subsection \ref{suse:cat-clan} below.
\end{enumerate}
\end{examples}
Since it seems to lead to a more readable exposition, we introduce explicit
notation and terminology for the dual notion.
\begin{definition}
A \emph{coclan} is a small category $\coclanc$ with a distinguished class
$\coclanc\dag$ of arrows called \emph{\codisplay maps} satisfying the dual axioms
of clans. The $2$-category $\CoClan$ of coclans is defined dually to that of
clans, i.e.\
\begin{equation*}
\CoClan(\coclanc,\cocland)=\Clan(\coclanc\op,\cocland\op)\op
\end{equation*}
for coclans $\coclanc,\cocland$. We use the arrow symbol $\codito$ for
co-display maps.
\end{definition}
\begin{remark}\label{rem:henry-cofib-cat} Coclans appear under the name
\emph{cofibration categories} in \cite[Def~2.1.5]{henry2016algebraic}. This is
however in conflict with Baues' notion of cofibration category, which also
includes a notion of weak equivalence~\cite[Section~I.1]{baues1989algebraic}.
See also Remark~\ref{rem:wfsef}\ref{rem:wfsef:henry} below.
\end{remark}

\subsection{Models}\label{suse:models}

\begin{definition}\label{def:tmod}A \emph{model} of a clan $\clant$ is a functor
$A:\clant\to\Set$ which preserves the terminal object and pullbacks of display
maps. We write $\tmod$ for the category of models of $\clant$, viewed as a full
subcategory of the functor category $\diag\clant$.
\end{definition}
\begin{remark}
In other words, a model of a clan $\clant$ is a clan morphism into the large
clan with underlying category $\Set$ set and the maximal (i.e.\ finite-limit)
clan structure.

In the spirit of functorial semantics, it is possible to consider models of
clans in other categories than sets, and even in other (typically large) clans.
However, the duality theory presented here is about models in $\Set$ and we
don't consider any other kind (apart from some speculations about
$\infty$-categorical models in \Section \ref{se:higher-types}).
\end{remark}

\begin{examples}
\begin{enumerate}
\item If $\mcc$ is a finite-limit clan
(Example \ref{ex:clan-examples}\ref{ex:clan-examples:fl-clan}) then $\Mod(\mcc)$
coincides with the category $\FL(\mcc,\Set)$ of finite-limit preserving functors
into $\Set$, which we also view as category of models of $\mcc$ qua finite-limit
theory. This means that it makes sense to view finite-limit theories as a
special case of clans.
\item If $\mcc$ is a finite-product clan, then $\Mod(\mcc)$ is the category
$\FP(\mcc,\Set)$ of finite-product preserving functors into $\Set$.

In Adámek, Rosický and Vitale's textbook~\cite[Def.~1.1]{adamek2010algebraic},
small finite-product categories are called \emph{algebraic theories}, and models
of algebraic theories are defined to be finite-product preserving functors into
$\Set$. Thus, we recover their notions as a special case, i.e.\ finite-product
clans correspond to algebraic theories, and models correspond to algebras. To
emphasize the analogy to the finite-limit case, we refer to algebraic theories
also as \emph{finite-product theories}.
\item If $\TT$ is a GAT, then the category of models of its syntactic category
$\mcc[\TT]$ (with the clan structure described in \Subsection
\ref{suse:gat-syncat}) is equivalent to the models of $\TT$, which Cartmell
defines\footnote{Strictly speaking, Cartmell does not `define' the models of
$\TT\h\Mod$ to be $\mathsf{ConFunc}(\mcc[\TT],\Fam)$ but `asserts' that the
categories are equivalent~\cite[pg.~2.77]{cartmell1978generalised}. But since he
refrains from giving a formal definition of $\TT\h\Mod$---writing only `It
should be quite clear what we mean by
model'~\cite[pg.~1.45]{cartmell1978generalised}---we take the assertion as a
definition.} to be the category $\mathsf{ConFunc}(\mcc[\TT],\Fam)$ of contextual
functors and natural transformations into the contextual category $\Fam$ of
iterated families of sets.
\end{enumerate} 
\end{examples}

The following remarks discuss some categorical properties of the category
$\tmod$ of models of a clan, establishing in particular that it is locally
finitely presentable. We refer to \Section \ref{se:lfp-soa} for the relevant
definitions.
\begin{remarks}\label{rem:mods}
\begin{enumerate}
\item\label{rem:mods:refl-lim-fcolim}As a category of models of a finite-limit sketch, $\tmod$ is reflective (and
therefore closed under arbitrary limits) in $\diag\clant$, and moreover it is
closed under filtered colimits~\cite[Section 1.C]{adamek1994locally}. In
particular, $\tmod$ is \locfp. 
\item\label{rem:mods:clan-to-mods}The representable functors $\clant(\Gamma,-):\clant\to\Set$ are models of
$\clant$ for all $\Gamma\in\clant$, thus the Yoneda embedding
$\yo:\clant\op\to\diag\clant$ lifts along the inclusion $\tmod\incl\diag\clant$
to a fully faithful functor $H:\clant\op\to\tmod$.
\tikzsetnextfilename{cpYXlqrK}
\begin{equation*}
\begin{mytikzcd}
\pnc   {\tmod}
        \ar[d,hook]
\\  {\clant\op}
        \ar[ru,"H",dashed]
        \ar[r,"\yo"'] \pnc   {\diag\clant}
\end{mytikzcd}
\end{equation*}
\vspace{-4mm}

\item
For $\Gamma\in\clant$, the representable functor
\begin{equation*}
    \tmod(H(\Gamma),-):\tmod\to\Set
\end{equation*}
is isomorphic to the evaluation functor $A\mapsto A(\Gamma)$, hence it preserves
filtered colimits as those are computed in $\diag\clant$ and therefore
pointwise. This means that $H(\Gamma)$ is \emph{compact}\footnote{Following
Lurie~\cite{lurie2009higher} we use the shorter term `compact' instead of the
more traditional `finitely presented' for objects whose covariant representable
functor preserves filtered colimits.} in $\tmod$.
\end{enumerate}
\end{remarks}

\subsection{The clan for categories}\label{suse:cat-clan}

\Subsection \ref{suse:gat-syncat} describes how the syntactic category $\mcc[\TT]$ of every
GAT $\TT$ can be viewed as a clan. The present section elaborates this for the
specific case of the GAT $\TT_\Cat$ of categories~\eqref{eq:gat-cat}. We will
use this clan and variations as a running example throughout the article.

Recall from Definition \ref{def:syncat} that the objects of $\mct_\Cat:=\mcc[\TT_\Cat]$
are equivalence classes of contexts, and the arrows are equivalence classes of
substitutions. By inspection of the axioms we see that sorts in $\TT_\Cat$
cannot depend on non-variable terms, since the only non-constant sort symbol is
$x\,y:O\ent A\dep{x,y}$ and there are no function symbols of type~$O$. This
means that up to reordering, all contexts are of the form
\begin{equation}\label{eq:cat-cxt}
(x_1\dots x_n:O,y_1:A\dep{x_{s_1},x_{t_1}},\dots,y_k:A\dep{x_{s_k},x_{t_k}})
\end{equation}
where $n,k\geq 0$ (such that $n> 0$ whenever $k>0$) and $1\leq s_l,t_l\leq n$
for $1\leq l\leq k$; declaring first a list of object variables and then a list
of arrow variables, each depending on a pair of the object variables. Given
another context $(u_1\dots u_m,v_1 \dots v_h)$, a substitution 
\begin{equation*}
u_1\dots u_m,v_1 \dots v_h\sent \sigma \;:\; x_1\dots x_n,y_1 \dots y_k
\end{equation*}
is a tuple $\sigma = (u_{i_1}\dots u_{i_n},f_1\dots f_k)$ where 
$1\leq i_1,\dots,i_n\leq m$ and the $f_l$ are terms
\begin{equation*}
u_1\dots u_m,v_1 \dots v_h\sent f_l\;:\; A(u_{i_{s_l}},u_{i_{t_l}}).
\end{equation*}
Some reflection shows that $\mcc[\TT_\Cat]$ is dual to the full subcategory of
$\Cat$ on free categories on finite graphs: the data of a
context~\eqref{eq:cat-cxt} is that of finite sets $V=\braces{x_1,\dots,x_n}$,
$E=\braces{y_1,\dots,y_k}$ of \emph{vertices} and \emph{edges}, and source
and target functions $s,t:E\to V$, and a substitution $\sigma$ as above consists
of a mapping from $\braces{x_1,\dots,x_n}$ to $\braces{u_1,\dots,u_m}$ and a
mapping from $\braces{y_1,\dots,y_k}$ to suitable paths in the graph represented
by the domain. This is not surprising, since every clan embeds contravariantly
into its category of models by Remark~\ref{rem:mods}\ref{rem:mods:clan-to-mods}.
Finally, the display maps in $\mct_\Cat$, which syntactically correspond to
projections `from longer contexts to shorter ones', correspond to functors
$G^*\incl H^*$ between free categories induced by inclusions (i.e.\
monomorphisms) $G\incl H$ of finite graphs in the dual presentation.

\subsection{The weak factorization system on models}

Next we introduce the \emph{extension--full weak factorization system} on the
category of models of a clan. We refer to \Section \ref{se:lfp-soa} for basic
facts about lifting properties and weak factorization systems (w.f.s.s) as well
as pointers to the literature.
\begin{definition}\label{def:wfsef}Let $\clant$ be a clan.
\begin{enumerate}
\item We call a map $f:A\to B$ in $\tmod$ \emph{full}, if it has the \emph{right
lifting property} (\rlp, see~Definition \ref{def:wfs}\ref{def:wfs:llp-rlp}) \wrt
all maps $H(p)$ for $p$ a display map.
\item We call $f:A\to B$ an \emph{extension}, if it has the \emph{left lifting
property} (\llp) \wrt all full maps. 
\item We call $A\in\tmod$ a \emph{\zext}, if $0\to A$ is an extension.
\end{enumerate}
\end{definition}
\begin{remarks}\label{rem:wfsef}
\begin{enumerate}
\item\label{rem:wfsef:arrow-symbols}We use the arrow symbols `$\codito$' for extensions (just as for \codisplay
maps), and `$\fullmap$' for full maps. We write $\classe$ and $\classf$ for the
classes of extensions and full maps in $\tmod$, respectively. By the \emph{small
object argument} (Theorem~\ref{thm:soa}), extensions and full maps form a
weak factorization system $\wfsef$.
\item\label{rem:wfsef:wpb-regepi}A map $f:A\to B$ in $\tmod$ is full if and only if the naturality square
\begin{equation*}
\tikzsetnextfilename{srwFEAZF}
\begin{mytikzcd}
    A(\Delta)
        \ar[r,"A(p)"]
        \ar[d,"f_\Gamma"']
\pnc   A(\Gamma)
        \ar[d,"f_\Delta"]
\\  B(\Delta)
        \ar[r,"B(p)"] 
\pnc   B(\Gamma)
\end{mytikzcd}
\end{equation*}
is a \emph{weak pullback}\footnote{Meaning that the comparison map to the actual
pullback is a surjection.} in $\Set$ for all display maps $p:\Delta\dito\Gamma$.
Setting $\Gamma=1$ we see that full maps are pointwise surjective and therefore
regular epimorphisms (the pointwise kernel pair $p,q: R\to A$ of $f$ is in
$\tmod$ since $\tmod\incl\diag\clant$ creates limits, and pointwise surjective
maps are coequalizers of their kernel pairs in $\diag\clant$, hence all the more
so in $\tmod$).
\item\label{rem:wfsef:display-extension}For every display map $p:\Delta\dito\Gamma$ in $\clant$, the arrow
$H(p):H(\Gamma)\extension H(\Delta)$ is an extension---these are precisely the
generators of the \wfs. In particular, all representable models $H(\Gamma)$ are
\zexts, since all terminal projections $\Gamma\dito 1$ are display maps in
$\clant$.
\item\label{rem:wfsef:henry} The same w.f.s.\ was already defined by Simon Henry
in~\cite[Definition~2.4.2]{henry2016algebraic}, using the terminology of
`cofibration categories' mentioned in Remark~\ref{rem:henry-cofib-cat}. There,
extensions are called cofibrations, and full maps trivial fibrations. We have
not used this homotopical terminology here since we don't want to think about
full maps as being `trivial' in any way.
\end{enumerate}
\end{remarks}
\begin{examples}\label{ex:wfsef}
\begin{enumerate}
\item\label{ex:wfsef:fp}If $\clant$ is a finite-product clan, then $\wfsef$ is cofibrantly generated by
initial injections $0\cong H(1)\extension H(\Gamma)$, since for every display
map $p:\Delta\times\Gamma\dito\Delta$ the generator $H(p)$ is a pushout
\begin{equation*}
\tikzsetnextfilename{NMqSaNfz}
\begin{mytikzcd}
    H(1)
        \ar[r,{Triangle[open,reversed]->}]
        \ar[d,{Triangle[open,reversed]->}]
\pnc   H(\Delta)
        \ar[d,"H(p)",{Triangle[open,reversed]->}]
\\  H(\Gamma)
        \ar[r,{Triangle[open,reversed]->}]
\pnc   |[label={[overlay, label distance=-3mm]135:\ulcorner}]|
    H(\Gamma\times\Delta)
\end{mytikzcd}
\end{equation*}
in $\tmod$ of an initial inclusion, and left classes of \wfs{}s are closed under
pushout. It follows that the full maps are \emph{precisely} the pointwise
surjective maps, which in this case also coincide with the regular epis, since
finite-product preserving functors are closed under image factorization in
$\diag\clant$ (and thus every non-surjective arrow factors through a strict
subobject). Thus, the \zexts are precisely the regular-projective objects in the
finite-product case, which also play a central role
in~\cite{adamek2010algebraic}.
\item\label{ex:wfsef:fl}If $\clant$ is a finite-limit clan then \emph{all} naturality squares of
full maps $f:A\fullmap B$ are weak pullbacks, including the naturality squares
\begin{equation*}
\tikzsetnextfilename{NiTWluiY}
\begin{mytikzcd}
    A(\Gamma)
        \ar[r]
        \ar[d]
\pnc   A(\Gamma\x\Gamma)\cong A(\Gamma)\x A(\Gamma)
        \ar[d]
\\  B(\Gamma)
        \ar[r]
\pnc   B(\Gamma\x\Gamma)\cong B(\Gamma)\x B(\Gamma)
\end{mytikzcd}
\end{equation*}
of diagonals $\Gamma\to\Gamma\x\Gamma$. From this it follows easily that
$f_\Gamma$ is injective, and since we have shown that it is surjective above, we
conclude that only isomorphisms are full in the finite-limit case.
\item\label{ex:wfsef:cat} The w.f.s.\ on $\Cat$ induced by the presentation
$\Cat=\mct_\Cat\hmod$ (see \Subsection\ref{suse:cat-clan}) has as full maps
functors $F:\CC\to\DD$ which have the r.l.p.\ w.r.t.\ all functors $G^*\incl
H^*$ for inclusions $G\incl H$ of finite graphs. It is not difficult to see that
these are precisely the functors which are full in the classical sense and
moreover surjective on objects, and that the w.f.s.\ is already generated by the
functors $(0\incl 1)$ and $(2\incl\bbtwo)$, where $2$ is the discrete category
with two objects and $\bbtwo$ is the interval category.
\end{enumerate}
\end{examples}

\section{Comodels and the universal property of \texorpdfstring{$\tmod$}{T-Alg}}
\label{se:comod-ump}

\subsection{Nerve--realization adjunctions}

We recall basic facts about \emph{nerve--realization} adjunctions, to establish
notation and conventions. Recall that for small $\CC$ the presheaf category
$\PCC=[\CC\op,\Set]$ is the \emph{small-colimit completion} of $\CC$, in the
sense that for every cocomplete category $\mfx$, precomposition with the Yoneda
embedding $\yo:\CC\to\PCC$ induces an equivalence 
\begin{equation}\label{eq:pcc-ump}
\catcocont(\PCC,\mfx)\longsimto[\CC,\mfx]
\end{equation}
between  the categories of  cocontinuous functors $\PCC\to\mfx$, and of functors
$\CC\to\mfx$. Specifically, the cocontinuous functor $F\reali:\PCC\to\mfx$
corresponding to $F:\CC\to\mfx$ is the {left Kan extension} of $F$ along
$\yo:\CC\to\PCC$, whose value at $A\in\^\CC$ admits alternative representations
\begin{align*}
F\reali(A)=F\ox A&=\textstyle\int^{C\in\CC}F(C)\x A(C)
\\&=\colim(\El(A)\to\CC\xrightarrow{F}\mfx)
\end{align*}
as a coend and as a colimit indexed by the category $\El(A)$ of elements of $A$.
If $\mfx$ is locally small then $F\reali$ has a right adjoint
$F\nerve:\mfx\to\PCC$ given by $F\nerve(X)=\mfx(F(-),X)$. We call $F\nerve$ and
$F\reali$ the \emph{nerve} and \emph{realization} functors of $F$, respectively,
and $F\reali\adj F\nerve$ the \emph{nerve--realization adjunction} of $F$.

\subsection{Comodels and the universal property of \texorpdfstring{$\tmod$}{T-Mod}}

The universal property of $\tmod$ is an equivalence between cocontinuous
functors out of $\tmod$ and coclan morphisms out of $\clant\op$. Following a
suggestion by Mathieu Anel, we refer to the latter as \emph{comodels} of the
clan. We will only use this term for coclan morphisms with cocomplete codomain.
\begin{definition}
A \emph{comodel} of a clan $\clant$ in a cocomplete category $\mfx$ is a functor
$F:\clant\op\to\mfx$ which sends $1$ to $0$, and display-pullbacks to pushouts.
We write $\clant\!\h\CoMod(\mfx)$ for the category of comodels of $\clant$ in
$\mfx$, as a full subcategory of the functor category.
\end{definition}
\begin{remark}
In other words, a comodel of $\clant$ in $\mfx$ is a coclan morphism from 
$\clant\op$ to the large coclan with underlying category $\mfx$ and the maximal 
coclan structure.
\end{remark}
\begin{theorem}[The universal property of $\tmod$]\label{thm:univ-prop-tmod} 
Let
$\clant$ be a clan. 
\begin{enumerate}
\item\label{thm:univ-prop-tmod:h-comod}The functor $H:\clant\op\to\tmod$ from
Remark~\ref{rem:mods}\ref{rem:mods:clan-to-mods} is a comodel.
\item\label{thm:univ-prop-tmod:equiv}For every cocomplete $\mfx$ and comodel $F:\clant\op\to\mfx$, the restriction of
$F\reali:[\clant,\Set]\to\mfx$ to $\tmod$ is cocontinuous. Thus, precomposition
with $H$ gives rise to an equivalence 
\begin{equation}\label{eq:univ-prop-talg}
\catcocont(\tmod,\mfx)\longsimto\clant\!\h\CoMod(\mfx)
\end{equation}
between categories of continuous functors and of comodels.
\item\label{thm:univ-prop-tmod:nerve}If $F:\clant\op\to\mfx$ is a comodel and $\mfx$ is locally small, then the nerve
functor $F\nerve:\mfx\to[\clant,\Set]$ factors through the inclusion
$\tmod\incl[\clant,\Set]$, giving rise to a \emph{restricted nerve realization
adjunction} $F\reali:\tmod\leftrightarrows\mfx:F\nerve$.
\tikzsetnextfilename{BfhewVAX}
\begin{equation*}\begin{mytikzcd}[row sep = large]
    {\clant\op}       
        \ar[dr,"F"', shorten > = 5pt]
        \ar[r,"H",hook]
\pnc   {\tmod}
        \ar[d,"F\reali"' near start, shift right = 2.5, shorten > = 2pt]
        \ar[d,phantom,"\adj"]
        \ar[from=d,"F\nerve"' near end, shift right = 2.5, shorten < = 2pt]
        \ar[r,hook,gray]
\pnc   |[gray]|{[\clant,\Set]}
        \ar[dl,"F\reali"' pos=.35, shift right = 1.5,gray, shorten = 8pt]
        \ar[ld,phantom,"\adj",gray, shift left = 1]
        \ar[from=dl,"F\nerve"' pos=.6, shift right = 3.5,gray, shorten = 8pt]
\\\pnc \mfx
\end{mytikzcd}\end{equation*}
\end{enumerate}
\end{theorem}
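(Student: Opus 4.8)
The plan is to treat all three parts through a single observation: $\tmod$ is a reflective localization of the presheaf category $\diag\clant$ at an explicit \emph{set} of maps, and the representables are already local. For part~\ref{thm:univ-prop-tmod:h-comod} I would argue purely by Yoneda. Since $H(\Gamma)=\clant(\Gamma,-)$, the co-Yoneda isomorphism gives $\tmod(H(\Gamma),A)\cong A(\Gamma)$, naturally in $\Gamma\in\clant$ and $A\in\tmod$. Taking $\Gamma=1$, and using that every model sends the terminal object to a singleton, $\tmod(H(1),A)$ is a singleton for all $A$, so $H(1)$ is initial, i.e.\ $H(1)=0$. Applying the representable $\tmod(-,A)$ to the $H$-image of a display-pullback square and using the same isomorphism turns it into the square $A(-)$ on the four corners, which is a pullback in $\Set$ exactly because $A$ is a model; as this holds for every $A\in\tmod$, the $H$-image is a pushout. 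Hence $H$ is a comodel.

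For part~\ref{thm:univ-prop-tmod:equiv}, note first that $(-)\circ H$ lands in comodels, since $H$ is one and cocontinuous functors preserve the initial object and pushouts. The real content is to identify $\tmod\subs\diag\clant$ as the full subcategory of objects orthogonal to the set $S$ comprising (a) the map from the initial object of $\diag\clant$ to $\yo(1)$, and (b) for each display-pullback square the comparison $\yo(\Delta)\sqcup_{\yo(\Gamma)}\yo(\Gamma')\to\yo(\Delta')$ from the pushout of representables. Unwinding orthogonality by Yoneda, the $S$-local objects are precisely the functors preserving the terminal object and display-pullbacks, i.e.\ the models; and $\tmod$ is reflective by Remark~\ref{rem:mods}\ref{rem:mods:refl-lim-fcolim}, with reflector $L$. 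Under the free-cocompletion equivalence~\eqref{eq:pcc-ump}, a cocontinuous $G=F\reali:\diag\clant\to\mfx$ inverts $S$ if and only if $F=G\circ\yo$ is a comodel: inverting (a) says $F(1)=0$, inverting (b) says $F$ carries display-pullbacks to pushouts. Granting the universal property of the localization, precomposition with $L$ then identifies $\catcocont(\tmod,\mfx)$ with the cocontinuous functors $\diag\clant\to\mfx$ inverting $S$, and composing with~\eqref{eq:pcc-ump} gives exactly $(-)\circ H$, because $L\circ\yo=H$ (representables are already local).

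The step I expect to be the main obstacle is the nontrivial half of this localization universal property, which also delivers the asserted cocontinuity of $F\reali|_{\tmod}$: a cocontinuous functor inverting the generating set $S$ must invert \emph{every} reflection map $\eta_P:P\to LP$, so that it factors through $L$ as a cocontinuous functor whose value on $\tmod$ is the restriction of $F\reali$. I would prove this by a closure argument. As $\diag\clant$ is locally presentable and $S$ is a set, the orthogonal reflector $L$ is produced by the small-object argument, exhibiting each $\eta_P$ as a transfinite composite of pushouts of coproducts of maps in $S$ and of their codiagonals. The maps inverted by a cocontinuous functor form a class containing the isomorphisms and closed under coproducts, pushouts, and transfinite composition, and such a functor inverts the codiagonal of any map it already inverts; hence inverting $S$ forces inverting every $\eta_P$. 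This is the only point at which infinitary machinery enters.

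Finally, part~\ref{thm:univ-prop-tmod:nerve} is a direct check followed by an adjunction restriction. The presheaf $F\nerve(X)=\mfx(F(-),X)$ sends $1$ to $\mfx(F(1),X)=\mfx(0,X)=\{\ast\}$, and, since $F$ turns display-pullbacks into pushouts while $\mfx(-,X)$ turns pushouts into pullbacks, it sends display-pullbacks to pullbacks; thus $F\nerve(X)$ is a model and $F\nerve$ corestricts to $\tmod$. The restricted adjunction is then immediate from the ambient $F\reali\adj F\nerve$ on $\diag\clant$: for $A\in\tmod$ and $X\in\mfx$, fullness of $\tmod\incl\diag\clant$ gives $\mfx(F\reali(A),X)\cong\diag\clant(A,F\nerve(X))=\tmod(A,F\nerve(X))$, exhibiting $F\reali|_{\tmod}\adj F\nerve$.
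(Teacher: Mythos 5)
Your proposal is correct, but it takes a genuinely different route from the paper: for parts (i) and (ii) the paper gives no argument of its own, instead appealing to the general theory of small \emph{realized limit sketches} and citing Pultr's theorem (with Brandenburg's account for the non-small case), and it only verifies part (iii) directly --- exactly as you do. What you supply is a self-contained proof of the sketch-theoretic fact specialized to clans: you identify $\tmod$ as the full subcategory of $S$-orthogonal objects of $\diag\clant$ for the explicit set $S$ consisting of $0\to\yo(1)$ and the comparisons $\yo(\Delta)\sqcup_{\yo(\Gamma)}\yo(\Gamma')\to\yo(\Delta')$; you observe via the free-cocompletion property that a cocontinuous functor on $\diag\clant$ inverts $S$ exactly when its restriction along $\yo$ is a comodel; and you establish the localization universal property by the orthogonal small object argument, noting that each unit $\eta_P:P\to LP$ is a transfinite composite of pushouts of coproducts of maps in $S$ and of their codiagonals, all of which a cocontinuous $S$-inverting functor inverts --- this single step simultaneously yields the asserted cocontinuity of $F\reali$ restricted to $\tmod$ and the factorization through $L$, and with $L\yo\cong H$ (representables are models, hence local) it gives the equivalence \eqref{eq:univ-prop-talg}. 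Two details are worth making explicit, though both are unproblematic in your sketch: the codiagonals are indispensable (orthogonality demands \emph{unique} lifts, so the reflection must coequalize as well as attach cells, and you correctly note that inverting a map forces inverting its codiagonal), and convergence of the transfinite construction is guaranteed here because the domains and codomains of the maps in $S$ are finite colimits of representables, hence compact; strictly speaking one should also check that precomposition with $L$ is fully faithful on natural transformations, which follows as usual from $L\eta$ being invertible and $L$ restricting to an equivalence on $\tmod$. Your part (i) (pushouts detected by applying $\tmod(-,A)$ and Yoneda) and part (iii) (nerve lands in models since $\mfx(-,X)$ turns pushouts into pullbacks, then restrict the adjunction by fullness of $\tmod\incl\diag\clant$) match what the paper states or leaves implicit. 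The trade-off: the paper's citation is short and situates the theorem in the general context of limit sketches, applying verbatim to any realized sketch; your argument is longer but self-contained and exposes the actual mechanism, at the cost of invoking the transfinite machinery explicitly.
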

\begin{proof}
Analogous statements to \ref{thm:univ-prop-tmod:h-comod} and
\ref{thm:univ-prop-tmod:equiv} hold more generally for arbitrary small
realized\footnote{A sketch is called `realized' if all its designated cones are
limiting.} limit sketches. As Brandenburg points out on
MathOverflow\footnote{https://mathoverflow.net/q/403653}, the earliest reference
for this seems to be~\cite[Theorem~2.5]{pultr1970right}. See
also~\cite{brandenburg2021large} which gives a careful account of an even more
general statement for non-small sketches.

For claim \ref{thm:univ-prop-tmod:nerve}, it's easy to see that for $X\in\mfx$,
the functor $F\nerve(X)=\mfx(F(-),X)$ is a model since $F$ is a comodel.
\end{proof}

\subsection{Slicing and coslicing}

As an application of Theorem~\ref{thm:univ-prop-tmod}, this subsection gives
statements about clan presentations of slice categories $\tmod/A$ of categories
of models (Proposition~\ref{prop:slice-tmod}), and of coslice categories
$H(A)/\tmod$ under representable models
(Proposition~\ref{prop:repr-coslice-tmod}).

\begin{definition}
For $\clant$ a clan and $\Gamma\in\clant$, we write $\clant_\Gamma$ for the
full subcategory of $\clant\!/\Gamma$ on display maps. Then $\clant_\Gamma$ is a
clan where an arrow in $\clant_\Gamma$ is a display map if its underlying map
is one in $\clant$. Compare~\cite[Proposition~1.1.6]{joyal2017notes}.
\end{definition}
\begin{proposition}\label{prop:repr-coslice-tmod}Let $\Gamma$ be an object of a clan $\clant$.
Then the functor 
\begin{equation}\label{eq:coslice-functor}
\Gamma/H:(\clant_\Gamma)\op\to H(\Gamma)/\tmod
\end{equation} 
which sends $d:\Delta\dito\Gamma$ to $H(d):H(\Gamma)\codito H(\Delta)$ is a
comodel. Moreover, its restricted  nerve--realization adjunction (in the sense
of Theorem~\ref{thm:univ-prop-tmod}\ref{thm:univ-prop-tmod:nerve})
\begin{equation}\label{eq:Gamma/H-nra}
\tikzsetnextfilename{BfhewVAY}
\begin{mytikzcd}[row sep = large]
        {(\clant_\Gamma)\op}
            \ar[dr,"\Gamma/H"', shorten > = 5pt]
            \ar[r,"H",hook]
\pnc    {\clant_\Gamma\hmod}
            \ar[d,"(\Gamma/H)\reali"' near start, shift right = 2.5, shorten > = 2pt]
            \ar[d,phantom,"\adj"]
            \ar[from=d,"(\Gamma/H)\nerve"' near end, shift right = 2.5, shorten < = 2pt]
\\\pnc {H(\Gamma)/\tmod}
\end{mytikzcd}
\end{equation}
is an equivalence and identifies the extension--full \wfs on
$\clant_\Gamma\hmod$ with the coslice \wfs on $H(\Gamma)/\tmod$.
\end{proposition}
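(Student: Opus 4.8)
The plan is to prove this by establishing three things: that $\Gamma/H$ is a comodel, that its restricted nerve–realization adjunction is an equivalence, and that this equivalence matches up the two weak factorization systems. Let me think through each.

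For the comodel claim: a comodel of $\clant_\Gamma$ in $H(\Gamma)/\tmod$ must send the terminal object to the initial object and send display-pullbacks to pushouts. The terminal object of $\clant_\Gamma$ is $\id_\Gamma : \Gamma \dito \Gamma$, and $\Gamma/H$ sends it to $H(\id_\Gamma) = \id_{H(\Gamma)}$, which is exactly the initial object of the coslice $H(\Gamma)/\tmod$. For pullbacks: display-pullbacks in $\clant_\Gamma$ are computed as display-pullbacks in $\clant$, and since $H : \clant\op \to \tmod$ is a comodel (Theorem~\ref{thm:univ-prop-tmod}\ref{thm:univ-prop-tmod:h-comod}) it sends these to pushouts in $\tmod$; one then checks these are also pushouts in the coslice, using that colimits in coslice categories are created by the forgetful functor to $\tmod$ (for connected colimits, and pushouts are connected).

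For the equivalence: this is where I expect the real work, and the key is to recognize $H(\Gamma)/\tmod$ as itself a category of models of a clan. The natural strategy is to identify $\clant_\Gamma\hmod$ with $H(\Gamma)/\tmod$ directly.

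First I would describe the objects. A model of $\clant_\Gamma$ is a functor on display maps over $\Gamma$ preserving the terminal object and display-pullbacks. An object of $H(\Gamma)/\tmod$ is a model $A$ together with a map $H(\Gamma) \to A$, equivalently (by Yoneda-type reasoning, since $H(\Gamma) = \clant(\Gamma, -)$ and $\tmod(H(\Gamma), A) \cong A(\Gamma)$ by Remark~\ref{rem:mods}) an element $a \in A(\Gamma)$. I would show that such a pointed model $(A, a)$ corresponds to a model of $\clant_\Gamma$ by sending $(d : \Delta \dito \Gamma)$ to the fiber of $A(d) : A(\Delta) \to A(\Gamma)$ over $a$—that is, the set of elements of $A(\Delta)$ mapping to $a$. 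The preservation of the terminal object and display-pullbacks in $\clant_\Gamma$ corresponds precisely to the fiber construction respecting these structures, which holds because $A$ preserves display-pullbacks in $\clant$. The hard part will be checking that this assignment is functorial, fully faithful, and essentially surjective in a way compatible with the two descriptions, and that it is exactly the restricted nerve–realization adjunction of $\Gamma/H$ rather than merely some abstractly equivalent functor. For this I would unwind the formula $(\Gamma/H)\nerve(A, a)(d) = (H(\Gamma)/\tmod)(H(d), (A,a))$ and verify it computes the fiber above; since $\Gamma/H$ is a comodel, Theorem~\ref{thm:univ-prop-tmod}\ref{thm:univ-prop-tmod:nerve} already guarantees the nerve lands in $\clant_\Gamma\hmod$, so the remaining task is to show this nerve is an equivalence, e.g.\ by exhibiting unit and counit isomorphisms, or by checking the nerve is fully faithful (the images $H(d)$ are dense/compact generators of the coslice) and that every coslice model arises as such a fiber system.

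For the weak factorization systems: both w.f.s.s are defined via the same right-lifting data. The full maps in $\clant_\Gamma\hmod$ are defined by the r.l.p.\ against $\{H(q) : q \text{ a display map of } \clant_\Gamma\}$, and the coslice w.f.s.\ on $H(\Gamma)/\tmod$ is the one induced/lifted from $\wfsef$ on $\tmod$ along the forgetful functor. Under the equivalence, the generators $H(q)$ for $q$ a display map over $\Gamma$ map (via $\Gamma/H$) to the maps $H(q)$ in the coslice, which are exactly the images of the generating extensions of $\tmod$ restricted to those lying under $H(\Gamma)$. Since an equivalence of categories carries one cofibrantly generated w.f.s.\ to another exactly when it identifies the generating sets (up to the left/right closure they generate), and since lifting properties are preserved and reflected by equivalences, the two w.f.s.s are identified once the generators match. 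I would make this precise by noting that a map in the coslice is full iff its underlying map in $\tmod$ is full (the forgetful functor creates the relevant lifting problems, since the generators $H(q)$ live under $H(\Gamma)$ and lifting problems against them in the coslice are the same as in $\tmod$), and that under the equivalence these correspond exactly to the full maps of $\clant_\Gamma\hmod$.

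I expect the main obstacle to be the equivalence in the middle step—specifically, verifying that the fiber-over-$a$ construction genuinely agrees with the nerve functor and is an equivalence, including the essential surjectivity (every pointed model decomposes into its fiber system and can be reconstructed from it). The compatibility of the w.f.s.s should then follow formally once the generators are seen to correspond, since lifting properties transport across equivalences.
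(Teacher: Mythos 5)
Your overall architecture matches the paper's proof: you check that $\Gamma/H$ is a comodel, identify $H(\Gamma)/\tmod$ with $\Gamma$-pointed models $(A,a)$, compute the nerve as the fiber construction $(\Gamma/H)\nerve(A,a)(d)=\setof{y\in A(\Delta)}{d\cdot y=a}$, and then compare generating sets of the two weak factorization systems. But the step you yourself flag as ``the main obstacle''---that the nerve is an equivalence---is exactly where the one genuinely non-obvious idea lives, and your proposal does not supply it. Essential surjectivity requires reconstructing, from an arbitrary model $B$ of $\clant_\Gamma$, a $\clant$-model on \emph{all} of $\clant$ together with a point; the fibers only tell you the values over objects that happen to live over $\Gamma$. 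The paper's resolution is the weakening-plus-diagonal construction: define $\Phi(B)=(B(-\times\Gamma\stackrel{\pit}{\dito}\Gamma),\,\delta\cdot\star)$, i.e.\ evaluate $B$ on the weakened display maps $\pit:\Delta\x\Gamma\dito\Gamma$ (which exist since products with $\Gamma$ are pullbacks of display maps along $\Gamma\dito 1$), with the point given by the diagonal $\delta:\Gamma\to\Gamma\x\Gamma$ applied to the unique element of $B(\id_\Gamma)$. One then checks $\Phi$ is inverse to the nerve, using $A(\Delta\x\Gamma)\cong A(\Delta)\x A(\Gamma)$ so that the fiber of $A(\pit)$ over $a$ recovers $A(\Delta)$. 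Without this (or an equivalent construction) your alternatives---``unit and counit isomorphisms'' or ``fully faithful plus essentially surjective via density of the $H(d)$''---remain programmatic; in particular, density of $\Gamma/H$ in the coslice is not easier than the equivalence itself.

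There is also a smaller inaccuracy in your w.f.s.\ comparison: the generating sets do \emph{not} literally match. By Hirschhorn's theorem the coslice \wfs is generated by the triangles built from product projections, i.e.\ maps $H(d\x\Gamma):H(\Delta\x\Gamma)\to H(\Theta\x\Gamma)$ under $H(\pit):H(\Gamma)\to H(\Delta\x\Gamma)$ (equivalently, the left adjoint $X\mapsto (H(\Gamma)+X)$ applied to the generators of $\tmod$---this adjunction is also the correct justification for your claim that a coslice map is in the right class iff its underlying map is full, which your parenthetical about ``the same lifting problems'' garbles, since coslice lifting problems are the point-compatible ones). Meanwhile $(\Gamma/H)\reali$ sends the generators of $\clant_\Gamma\hmod$ to triangles $H(d):H(\Delta)\to H(\Theta)$ under \emph{arbitrary} display maps $e:\Delta\dito\Gamma$, $f:\Theta\dito\Gamma$, a strictly larger class. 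The paper closes this gap with a sandwich argument: the projection triangles are among the general ones, and the general ones lie in the saturation (left class) of the coslice \wfs because their underlying maps are extensions and fillers in $\tmod$ are automatically coslice maps; hence both sets generate the same \wfs. This two-line saturation argument is what your phrase ``once the generators are seen to correspond'' needs to become in order for the last part of your proof to go through.
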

\begin{proof}
It is easy to see that $\Gamma/H$ is a comodel. For the second claim, since
arrows $H(\Gamma)\to A$ correspond to elements of $A(\Gamma)$, we can identify
the coslice category $H(\Gamma)/\tmod$ with the category of `$\Gamma$-pointed
models of $\clant$', \ie pairs $(A,x)$ of a model $A$ and an element
$x\in A(\Gamma)$, and morphisms preserving chosen elements. 

Under this identification, we first verify that the functor $(\Gamma/H)\nerve$ is
given by 
\begin{equation*}
(\Gamma/H)\nerve(A,x)(\Delta\stackrel{d}{\dito}\Gamma)=\setof{y\in A(\Delta)}{d\cdot 
    y = x}, 
\end{equation*}
and then that it is an equivalence with inverse $\Phi:\clant_\Gamma\hmod\to
H(\Gamma)/\tmod$ given by 
\begin{equation*}
\Phi(B)=(B(-\times\Gamma\stackrel{\pit}{\dito}\Gamma),\delta\cdot \star)
\end{equation*}
where $\star$ is the unique element of $B(\id_\Gamma)$ and
$\delta:\Gamma\to\Gamma\times\Gamma$ is the diagonal map viewed as global
element of $\pit:\Gamma\x\Gamma\dito\Gamma$ in $\clant_\Gamma$. 
Thus, $(\Gamma/H)\reali=\Phi$.

Finally we note that the w.f.s.\ on $H(\Gamma)/\tmod$
is cofibrantly generated by commutative triangles
\begin{equation}\label{eq:gen-proj}
\tikzsetnextfilename{qFUGebtd}
\begin{mytikzcd}[column sep = large]
        H(\Gamma)       
            \ar[dr,"H(\pit)",{Triangle[open,reversed]}->]
            \ar[d,"H(\pit)"',{Triangle[open,reversed]}->]
\pnc\\  H(\Delta\x\Gamma)       
            \ar[r,"H(d\x\Gamma)"',{Triangle[open,reversed]}->]
\pnc    H(\Theta\x\Gamma)
\end{mytikzcd}\end{equation}
for display maps
$d:\Theta\dito\Gamma$~\cite[Theorem~2.7]{hirschhorn2021overcategories}. On the
other hand, since $(\Gamma/H)\reali\circ H = \Gamma/H$ (see~\eqref{eq:Gamma/H-nra}),
the functor $(\Gamma/H)\reali$ sends the generators of the extension--full w.f.s.\ on
$\clant_\Gamma\hmod$ to triangles
\begin{equation}\label{eq:gen-gen}
\tikzsetnextfilename{qOyoyNnT}
\begin{mytikzcd}
        H(\Gamma)       
            \ar[dr,"H(f)",{Triangle[open,reversed]}->]
            \ar[d,"H(e)"',{Triangle[open,reversed]}->]
\pnc\\  H(\Delta)       
            \ar[r,"H(d)"',{Triangle[open,reversed]}->]
\pnc    H(\Theta)
\end{mytikzcd}\end{equation}
for arbitrary display maps $d,e,f$ in $\clant$. Now the triangles of
shape~\eqref{eq:gen-gen} contain the triangles  of shape \eqref{eq:gen-proj},
but are contained in their saturation, which is the left class of the coslice
w.f.s.\ Thus, the two w.f.s.s are equal.
\end{proof}

\begin{proposition}\label{prop:slice-tmod}Let $A$ be a model of a clan $\clant$. Then the projection functor
$\El(A)\to\clant\op$ creates a coclan structure on $\El(A)$, \ie $\El(A)$ is a
coclan with \codisplay maps those arrows that are mapped to display maps in
$\clant$. Moreover, the canonical functor
\begin{equation*}
H/A:\El(A)\cong\clant\op/A\to\tmod/A
\end{equation*}
is a comodel, and its restricted nerve--realization adjunction 
\begin{equation*}
\tikzsetnextfilename{BfhewVAZ}
\begin{mytikzcd}[row sep = large]
    {\El(A)}
        \ar[dr,"H/A"', shorten > = 5pt]
        \ar[r,"H",hook]
\pnc   {\El(A)\op\hmod}
        \ar[d,"(H/A)\reali"' near start, shift right = 2.5, shorten > = 2pt]
        \ar[d,phantom,"\adj"]
        \ar[from=d,"(H/A)\nerve"' near end, shift right = 2.5, shorten < = 2pt]
\\\pnc {\tmod/A}
\end{mytikzcd}
\end{equation*}
is an equivalence which identifies the extension--full w.f.s.\ on
$\El(A)\op\hmod$ and the slice w.f.s.\ on $\tmod/A$.
\end{proposition}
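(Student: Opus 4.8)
The plan is to handle the three assertions in order, reducing the final equivalence to the universal property of Theorem~\ref{thm:univ-prop-tmod} together with the classical description of slices of presheaf categories. For the coclan structure I would argue dually: the opposite $\El(A)\op$ is the Grothendieck construction of the model $A$, a discrete opfibration over $\clant$, and it suffices to check that its projection $\El(A)\op\to\clant$ creates a \emph{clan} structure with display maps exactly those lying over display maps of $\clant$. The clan axioms of Definition~\ref{def:clan} are then verified by lifting: isomorphisms and composites lift uniquely along the opfibration, giving axiom~(ii); for axiom~(iii) I use that $A$ preserves the terminal object, so $A(1)$ is a singleton, $(1,\star)$ is terminal, and its terminal projections lie over display maps; and for axiom~(i), given a display map and an arbitrary map, I form the pullback of their images in $\clant$ and observe that, \emph{precisely because $A$ is a model and so preserves pullbacks of display maps}, the set assigned by $A$ to the pulled-back vertex is the relevant fibre product, which supplies the unique lifted element. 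Dualizing yields the coclan structure on $\El(A)$; note that modelhood of $A$ is essential here, as no such structure exists for a general functor.

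Next, to see that $H/A$ is a comodel I would observe that post-composing it with the forgetful functor $\tmod/A\to\tmod$ recovers $H$ precomposed with the projection $\El(A)\to\clant\op$. Since colimits in $\tmod/A$ are created by this forgetful functor, since $H$ is a comodel by Theorem~\ref{thm:univ-prop-tmod}\ref{thm:univ-prop-tmod:h-comod} (sending $1$ to $0$ and display-pullbacks to pushouts), and since the projection carries the codisplay data of $\El(A)$ to that of $\clant\op$, it follows that $H/A$ sends the initial object to $0$ and codisplay-pushouts to pushouts. As $\tmod/A$ is cocomplete and locally small (being a slice of the l.f.p.\ category $\tmod$, Remark~\ref{rem:mods}\ref{rem:mods:refl-lim-fcolim}), Theorem~\ref{thm:univ-prop-tmod}\ref{thm:univ-prop-tmod:nerve} then produces the restricted nerve--realization adjunction $(H/A)\reali\adj(H/A)\nerve$.

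The heart of the argument is that this adjunction is an equivalence. My plan is to identify $(H/A)\nerve$ with the \emph{fibre functor} sending $(B\to A)$ to the assignment $(\Gamma,x)\mapsto b_\Gamma^{-1}(x)$, and $(H/A)\reali$ with the \emph{total-space functor} $M\mapsto(\Gamma\mapsto\coprod_{x\in A(\Gamma)}M(\Gamma,x))$ equipped with its projection to $A$. On the level of \emph{all} presheaves these are the classical mutually inverse equivalence $[\clant,\Set]/A\simeq[\El(A)\op,\Set]$, and since $(H/A)\reali\circ H=H/A$, the comparison functor supplied by Theorem~\ref{thm:univ-prop-tmod}\ref{thm:univ-prop-tmod:equiv} must coincide with this equivalence by uniqueness of cocontinuous extensions. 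The main obstacle, and the only step that genuinely uses the (co)clan axioms on both sides, is verifying that this presheaf-level equivalence \emph{restricts} to the reflective subcategories of models: one must show that $B$ preserves the terminal object and pullbacks of display maps of $\clant$ if and only if its fibre functor preserves the terminal object and pullbacks of display maps of $\El(A)\op$. Here once more the decomposition $A(\Delta')\cong A(\Delta)\times_{A(\Gamma)}A(\Gamma')$, valid because $A$ is a model, is exactly what makes the fibrewise pullback condition equivalent to the total one, so that the restricted functors are inverse equivalences between $\El(A)\op\hmod$ and $\tmod/A$.

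Finally, for the weak factorization systems I would compare cofibrant generators as in Proposition~\ref{prop:repr-coslice-tmod}. The extension--full \wfs on $\El(A)\op\hmod$ is generated by the images $H(c)$ of display maps $c$ of $\El(A)\op$ (Remark~\ref{rem:wfsef}\ref{rem:wfsef:display-extension}); because $(H/A)\reali\circ H=H/A$, these are carried by the equivalence to maps of $\tmod/A$ whose underlying $\tmod$-map is a generator $H(p)$ of $\wfsef$ together with a structure map of its codomain to $A$. Since a display map of $\El(A)\op$ lying over $p\colon\Delta\dito\Gamma$ is precisely the choice of an element $w\in A(\Delta)$, these transported generators are exactly the pairs of a generator of $\wfsef$ and a structure map on its codomain; and such a pair set cofibrantly generates the slice \wfs, since a map over $A$ lies in the right class of the slice \wfs iff its underlying $\tmod$-map is full, so the r.l.p.\ against these generators in $\tmod/A$ reduces to the r.l.p.\ against the generators of $\wfsef$ in $\tmod$. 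The two weak factorization systems therefore have the same saturation and coincide.
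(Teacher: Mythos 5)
Your proposal is correct and follows essentially the same route as the paper: the coclan structure on $\El(A)$ and the comodel property of $H/A$ are the paper's ``straightforward'' verifications spelled out (rightly flagging that $A$ being a model is what supplies the lifted display-pullbacks), the equivalence is obtained exactly as the restriction of the classical equivalence $\widehat{\clant\op}/A\simeq\widehat{\El(A)}$, with your fibre/total-space description and the decomposition $A(\Delta')\cong A(\Delta)\times_{A(\Gamma)}A(\Gamma')$ being precisely the content behind the paper's one-line appeal to that equivalence, and the w.f.s.s are identified by transporting generators along $(H/A)\reali\circ H=H/A$ just as in the paper. The only divergence is the final step, where the paper cites \cite[Theorem~1.5]{hirschhorn2021overcategories} for the fact that the transported triangles generate the slice w.f.s., whereas you verify it directly---and correctly, since for a map $g$ over $A$ any filler of the underlying square against a generator $H(d)$ is automatically a map over $A$ (its structure map factors as $a_Y\circ g\circ l$), so the r.l.p.\ in $\tmod/A$ against the transported generators, ranging over all elements $y\in A(\Delta)$, is equivalent to fullness of the underlying map; this makes your account slightly more self-contained at the cost of re-proving a known lemma.
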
 
\begin{proof}
The verification that $\El(A)\op$ is a clan and $H/A$ is a coclan morphism is
straightforward. The equivalence is a restriction of the well-known equivalence
$\widehat{\clant\op}/A\simeq\widehat{\clant\op/A}$. The w.f.s.s coincide
since---again by $(H/A)\reali\circ H=H/A$---the functor $(H/A)\reali$ sends the
generators of the w.f.s.\ on $\El(A)\op\hmod$ to commutative triangles 
\tikzsetnextfilename{PezDXLjb}
\begin{equation*}\begin{mytikzcd}
    H(\Gamma)
            \ar[dr,"\hat x"']
            \ar[r,"d", {Triangle[open,reversed]}->]
\pnc   H(\Delta)
            \ar[d,"\hat y"]
\\\pnc A
\end{mytikzcd}\end{equation*} 
in $\tmod/A$, where $d:\Delta\dito\Gamma$ is a display map in $\clant$ and $x\in
A(\Gamma)$ and $y\in A(\Delta)$ are elements with $d\cdot y = x$.
By~\cite[Theorem~1.5]{hirschhorn2021overcategories}, these form a set of
generators for the slice w.f.s.\ on $\tmod/A$.
\end{proof}

\section{\texorpdfstring{\efcats}{(E,F)-categories)} and the biadjunction}
\label{se:efcats}

\begin{definition}\label{def:efcat}An \efcat is a l.f.p.\ category $\efl$ with a \wfs $\wfsef$ whose maps we call
\emph{extensions} and \emph{full maps}. A \emph{morphism  of \efcats} is a
functor $F:\efl\to\efm$ preserving small limits, filtered colimits, and full
maps. We write $\EFCat$ for the $2$-category of \efcats, morphisms of \efcats,
and natural transformations.
\end{definition}
\begin{lemma}\label{lem:ef-ladj}
If $F:\efl\to\efm$ is a morphism of \efcats, then it
has a left adjoint $L:\efm\to\efl$ which preserves compact objects and
extensions. Conversely, if $L:\efm\to\efl$ is a cocontinuous functor preserving
compact objects and extensions, then it has a right adjoint $F:\efl\to\efm$
which is a morphism of \efcats. Writing $\EFCat_L(\efm,\efl)$ for the category
of cocontinuous functors $\efm\to\efl$ preserving extensions and compact
objects, we thus have $\EFCat_L(\efm,\efl)\simeq\EFCat(\efl,\efm)\op$.
\end{lemma}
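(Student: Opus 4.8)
The plan is to produce $F$ and $L$ as an adjoint pair and then transport the two preservation properties across the adjunction. First I would obtain the adjoints from the standard theory of locally presentable categories (recalled in Appendix~\ref{se:lfp-soa}; see \cite{adamek1994locally}): a functor between \locfp categories is a right adjoint if and only if it preserves small limits and is accessible, and a left adjoint if and only if it is cocontinuous. In the first half $F$ preserves small limits and filtered colimits, hence is accessible and admits a left adjoint $L$; in the second half $L$ is cocontinuous and admits a right adjoint $F$, which then automatically preserves small limits.

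Next I would deal with compact objects and filtered colimits, which are exchanged by the familiar duality for an adjunction $L\adj F$: the left adjoint preserves compact objects if and only if the right adjoint preserves filtered colimits. One direction uses the isomorphism $\efl(LC,-)\cong\efm(C,F(-))$ directly, so that compactness of $C$ and preservation of filtered colimits by $F$ make $LC$ compact; the other tests the comparison map $\colim_i F(X_i)\to F(\colim_i X_i)$ against compact objects $C$ of $\efm$, using that each $LC$ is compact and that the compacts form a strong generator, to see it is an isomorphism. This simultaneously supplies ``$L$ preserves compacts'' in the first half and ``$F$ preserves filtered colimits'' in the second.

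The structural step, and the one I expect to be the crux, is matching extensions with full maps through the adjunction bijection on lifting problems. For $L\adj F$, a square from $Le$ to a map $g$ in $\efl$ transposes to a square from $e$ to $Fg$ in $\efm$, and the transposition is a bijection on squares and on their diagonal fillers; hence $Le$ has the \llp against $g$ iff $e$ has the \llp against $Fg$, and symmetrically for the \rlp. I would then combine this with the characterizations built into a weak factorization system (Definition~\ref{def:wfs}): the extensions are exactly the maps with the \llp against all full maps, and the full maps exactly those with the \rlp against all extensions. In the first half, for an extension $e$ in $\efm$ and a full $g$ in $\efl$, the map $Fg$ is full since $F$ is a morphism of \efcats, so $e$ lifts against $Fg$ and therefore $Le$ lifts against $g$; letting $g$ range over all full maps shows $Le$ is an extension. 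In the second half, for a full $g$ in $\efl$ and an arbitrary extension $e$ in $\efm$, the map $Le$ is an extension since $L$ preserves them, so $g$ lifts against $Le$ and therefore $Fg$ has the \rlp against $e$; letting $e$ range over all extensions shows $Fg$ is full, so that $F$ is a morphism of \efcats.

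Finally, for the equivalence $\EFCat_L(\efm,\efl)\simeq\EFCat(\efl,\efm)\op$ I would invoke the mate correspondence. By the two halves, the assignments $F\mapsto L$ and $L\mapsto F$ are mutually inverse up to isomorphism on objects, and a natural transformation $\alpha\colon F\Rightarrow F'$ of right adjoints corresponds bijectively and naturally to its mate $L'\Rightarrow L$ of left adjoints, which reverses direction; this contravariance is exactly the $(-)\op$. The only delicate points are ensuring accessibility before invoking the adjoint functor theorem, and checking that the comparison map in the filtered-colimit argument is genuinely the identity under the stated isomorphisms, but both are routine given the \locfp hypotheses. The real content is the lifting-problem transposition, where the compatibility of the adjunction with the two weak factorization systems is used.
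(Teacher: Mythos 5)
Your proposal is correct and takes essentially the same route as the paper: the paper's proof likewise obtains $L$ from the adjoint functor theorem for presentable categories and $F$ from the special adjoint functor theorem, and then dismisses the two preservation transfers (compacts vs.\ filtered colimits, extensions vs.\ full maps) as ``standard arguments''---precisely the hom-set and lifting-problem transpositions you spell out. The mate correspondence you use for the final equivalence is also the intended (unstated) mechanism, so your write-up is a fleshed-out version of the paper's citation-level proof.
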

\begin{proof}
That morphisms of \efcats have left adjoints follows from the \emph{adjoint
functor theorem for presentable
categories}~\cite[Theorem~1.66]{adamek1994locally}, and conversely the
\emph{special adjoint functor theorem}~\cite[Section~V-8]{maclanecwm} implies
that cocontinuous functors between \lfp categories have right adjoints. It
follows from standard arguments that the left adjoint preserves compact objects
iff the right adjoint preserves filtered colimits, and that the left adjoint
preserves extensions iff the right adjoint preserves full maps.
\end{proof} 
\begin{lemma}\label{lem:precomp-inverse-ef}
For any morphism $F:\clans\to\clant$ of clans, the precomposition
functor 
\begin{equation*}
(-)\circ F\;:\;\tmod\;\to\;\smod
\end{equation*}
is a morphism of \efcats. Thus the assignment $\clant\mapsto\tmod$ extends to a
\mbox{$2$-functor}
\begin{equation*}
(-)\hmod\;:\;\Clan\sm\op\;\to\;\EFCat
\end{equation*}
from clans to \efcats.
\end{lemma}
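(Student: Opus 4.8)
The plan is to verify that precomposition with a clan morphism $F:\clans\to\clant$ lands in models, preserves the relevant (co)limits, and preserves full maps. I would begin by checking that $(-)\circ F$ restricts to a functor $\tmod\to\smod$: if $A:\clant\to\Set$ preserves the terminal object and pullbacks of display maps, then since $F$ is a clan morphism it preserves the terminal object, display maps, and pullbacks of display maps, so $A\circ F$ preserves the terminal object and pullbacks of display maps in $\clans$, hence is a model. This is a routine diagram chase using only Definition~\ref{def:clan} and Definition~\ref{def:tmod}.

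Next I would address limits and filtered colimits. By Remark~\ref{rem:mods}\ref{rem:mods:refl-lim-fcolim}, both limits and filtered colimits in $\tmod$ and $\smod$ are computed as in the ambient functor categories $\diag\clant$ and $\diag\clans$, where they are pointwise. Precomposition with any functor $F$ preserves pointwise limits and colimits, so $(-)\circ F$ preserves all small limits and filtered colimits. This step is essentially formal once the computation of (co)limits in $\tmod$ is granted.

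The main point—and the one requiring genuine use of the clan structure rather than mere functoriality—is preservation of full maps. Here I would invoke the pointwise characterization of full maps from Remark~\ref{rem:wfsef}\ref{rem:wfsef:wpb-regepi}: a map $f:A\to B$ in $\tmod$ is full iff for every display map $p:\Delta\dito\Gamma$ in $\clant$ the naturality square of $f$ at $p$ is a weak pullback in $\Set$. Given such an $f$, its precomposite $f\circ F$ has, at a display map $p':\Delta'\dito\Gamma'$ in $\clans$, a naturality square that is exactly the naturality square of $f$ at the display map $F(p'):F(\Delta')\dito F(\Gamma')$ in $\clant$; since $F$ preserves display maps, this square is a weak pullback by hypothesis. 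Hence $f\circ F$ is full. This is the key step, and the only obstacle is making sure the pointwise criterion is applied at a \emph{display} map of $\clant$, which is guaranteed precisely because clan morphisms preserve display maps.

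Having shown that $(-)\circ F$ is a morphism of \efcats, the final claim—that $\clant\mapsto\tmod$ is a contravariant $2$-functor—follows formally. Precomposition is strictly functorial in $F$, namely $(-)\circ\id=\id$ and $(-)\circ(GF)=((-)\circ G)\circ F$, and it acts on a natural transformation $\alpha:F\Rightarrow F'$ by whiskering, giving a natural transformation between the precomposition functors; the $2$-functoriality identities are the standard interchange laws for whiskering. I expect no difficulty here beyond bookkeeping, so the substance of the lemma is concentrated in the full-map preservation step above.
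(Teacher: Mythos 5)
Your proposal is correct and matches the paper's own proof essentially step for step: the paper likewise dispatches limits and filtered colimits by noting they are computed pointwise, and proves preservation of full maps by observing that the naturality square of $f\circ F$ at a display map $p$ of $\clans$ equals the naturality square of $f$ at the display map $F(p)$ of $\clant$, invoking the weak-pullback characterization of fullness. Your additional remarks (that precomposition lands in models, and that $2$-functoriality is formal) are routine points the paper leaves implicit, so there is no substantive difference.
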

\begin{proof}
The preservation of small limits and filtered colimits is obvious since they are
computed pointwise (Remark~\ref{rem:mods}\ref{rem:mods:refl-lim-fcolim}). To
show that $(-\circ F)$ preserves full maps, let $f:A\to B$ be full in $\tmod$.
It is sufficient to show that the $(f\circ F)$-naturality squares are weak
pullbacks at all display maps $p:$ in $\smod$. But the $(f\circ F)$-naturality
square at $p$ is the same as the $f$-naturality square at $F(p)$ so the claim
follows since $f$ is full and $F$ preserves display maps.
\end{proof}
\begin{definition}\label{def:czer}
Given an \efcat $\efl$, write $\czer{\efl}\subs\efl$ for the full subcategory 
on \czexts. 
\end{definition}
\begin{lemma}
$\czer{\efl}$ is a coclan with extensions as \codisplay maps. \qed
\end{lemma}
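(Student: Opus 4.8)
The plan is to verify the three dual clan axioms (Definition~\ref{def:clan}) for $\czer{\efl}$, taking the \codisplay maps to be the extensions between \czexts. Throughout I would use only two standard inputs: that the left class $\classe$ of a \wfs is closed under isomorphisms, composition, and cobase change, and that the compact objects of an \lfp category are closed under finite colimits (see \Section\ref{se:lfp-soa}). Essential smallness is immediate, since $\czer{\efl}$ is a full subcategory of the compact objects of $\efl$, which already form an essentially small category; I tacitly replace it by a small skeleton.

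For the isomorphism-and-composition axiom, note that extensions constitute the left class $\classe$ of the \wfs $\wfsef$, so they contain all isomorphisms and are closed under composition; since the endpoints of any such composite lie in $\czer{\efl}$ by hypothesis, the axiom holds verbatim. For the initial-object axiom, the initial object $0$ of $\efl$ is compact (the functor $\efl(0,-)$ is constant, hence preserves filtered colimits) and is a \zext (since $\id_0$ is an isomorphism and therefore an extension), so $0\in\czer{\efl}$, and it is initial there because $\czer{\efl}$ is full. Moreover, for every $A\in\czer{\efl}$ the initial inclusion $0\to A$ is an extension by the very definition of \zext, so initial inclusions are \codisplay maps.

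The substance of the lemma is the pushout axiom. Given a \codisplay map $e:A\codito B$ and an arbitrary map $f:A\to C$ in $\czer{\efl}$, I would first form the pushout $D=B+_A C$ in the ambient cocomplete category $\efl$. The pushout leg $g:C\to D$ is then an extension, because $\classe$ is closed under cobase change. It remains to check $D\in\czer{\efl}$: the object $D$ is compact, being a finite colimit of the compact objects $A,B,C$, and it is a \zext, because $0\to D$ factors as the composite $0\to C\xrightarrow{\,g\,}D$ of two extensions. Finally, since $\czer{\efl}$ is full and contains $D$ together with its pushout cocone, $D$ is also the pushout in $\czer{\efl}$; the resulting square is the required \codisplay pushout, with \codisplay leg $g$.

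The only point going beyond routine \wfs bookkeeping is the compactness of $D$, which rests on the closure of compact objects under finite colimits; every other clause is a direct transcription of the left-class closure properties of $\wfsef$. As all the ingredients are standard, the proof is indeed straightforward, as asserted before the statement.
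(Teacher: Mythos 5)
Your proof is correct and is exactly the routine verification the paper leaves implicit (it states the lemma as ``straightforward'' with no written proof): closure of the left class $\classe$ under isomorphisms, composition, and pushout, plus closure of compact objects under finite colimits, with the pushout computed in $\efl$ and seen to remain in the full subcategory $\czer{\efl}$. Your attention to the two small points --- that $0$ is itself a compact \zext, and that one passes to a small skeleton for the smallness requirement --- is exactly what is needed, so nothing is missing.
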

\begin{proposition}\label{prop:biadj}The assignment $\efl\mapsto\czer{\efl}\op$ extends to a pseudofunctor
\begin{equation*}
\czer{-}\op\;:\;\EFCat\to\Clan\op
\end{equation*}
which is left biadjoint to $(-)\hmod:\Clan\op\to\EFCat$.
\end{proposition}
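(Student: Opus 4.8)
The plan is to establish, pseudonaturally in the clan $\clant$ and the \efcat $\efl$, an equivalence of hom-categories
\[
\Clan(\clant,\czer{\efl}\op)\;\simeq\;\EFCat(\efl,\tmod),
\]
which, since $\Clan(\clant,\czer{\efl}\op)=\Clan\op(\czer{\efl}\op,\clant)$, is exactly the defining equivalence exhibiting $\czer{-}\op$ as left biadjoint to $(-)\hmod$. Pseudofunctoriality of $\czer{-}\op$ then follows from the representability expressed by this family of equivalences; alternatively one can exhibit the pseudofunctor directly by sending a morphism $G:\efl\to\efm$ of \efcats to (the opposite of) the restriction to \czexts of its left adjoint $L:\efm\to\efl$, which exists and preserves compact objects and extensions by Lemma~\ref{lem:ef-ladj} and hence restricts to a coclan morphism $\czer{\efm}\to\czer{\efl}$. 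I would build the equivalence by composing three:
\[
\Clan(\clant,\czer{\efl}\op)=\CoClan(\clant\op,\czer{\efl})\op\;\simeq\;\EFCat_L(\tmod,\efl)\op\;\simeq\;\EFCat(\efl,\tmod),
\]
where the first identity is the definition of $\CoClan$ by duality, the last equivalence is Lemma~\ref{lem:ef-ladj} with $\efm=\tmod$, and $\EFCat_L(\tmod,\efl)$ denotes the cocontinuous functors $\tmod\to\efl$ preserving compact objects and extensions.

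The crux is the middle equivalence $\CoClan(\clant\op,\czer{\efl})\simeq\EFCat_L(\tmod,\efl)$, which I obtain by restricting the universal property of $\tmod$ (Theorem~\ref{thm:univ-prop-tmod}). That theorem identifies cocontinuous functors $\tmod\to\efl$ with comodels $F:\clant\op\to\efl$, that is, functors sending $1$ to $0$ and display-pullbacks to pushouts; it remains to match the two side conditions. Since the generators of the extension class in $\tmod$ are precisely the arrows $H(p)$ for $p$ a display map (Remark~\ref{rem:wfsef}\ref{rem:wfsef:display-extension}), and the left class of a \wfs is the saturation of its generators under pushouts, transfinite composites and retracts---all preserved by the cocontinuous $F\reali$---the functor $F\reali$ preserves extensions \iff each $F(p)=F\reali(H(p))$ is an extension in $\efl$. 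Likewise, as the compact objects of $\tmod$ are the retracts of finite colimits of the $H(\Gamma)$ and $F\reali$ preserves finite colimits and retracts, $F\reali$ preserves compact objects \iff each $F(\Gamma)=F\reali(H(\Gamma))$ is compact. Applying this to the terminal projections $\Gamma\dito 1$ shows these conditions force $F$ to land in the full subcategory $\czer{\efl}$ of \czexts, and sending display maps to extensions is exactly sending the codisplay maps of $\clant\op$ to codisplay maps of $\czer{\efl}$; so the restricted functor $F:\clant\op\to\czer{\efl}$ is a coclan morphism.

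One technical point I would verify is that $\czer{\efl}$ is closed under pushouts of extensions, and that such pushouts, computed in $\efl$, remain pushouts in $\czer{\efl}$: a pushout of an extension is an extension and a finite colimit of compact objects is compact, so a pushout $B+_AC$ of \czexts along an extension is again a \czext, and being a colimit of a diagram lying in the full subcategory $\czer{\efl}$ it is a colimit there as well. This guarantees that \emph{``display-pullbacks to pushouts in $\efl$''} and \emph{``pushouts of codisplay maps in $\czer{\efl}$''} coincide, completing the identification on objects; on morphisms all three categories carry plain natural transformations as $2$-cells, so the restriction is full on hom-objects and the equivalence of the middle step follows.

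Assembling the chain and checking that each equivalence is pseudonatural in $\clant$ and $\efl$---duality is strict, the restricted universal property is natural in the comodel, and the adjoint-transpose of Lemma~\ref{lem:ef-ladj} is pseudonatural---then yields the biadjunction. I expect the main obstacle to be precisely this bookkeeping of the two side conditions through the universal property: in particular the generator-preservation argument identifying extension-preservation of $F\reali$ with sending display maps to extensions, and the verification that $\czer{\efl}$ is closed under the relevant pushouts, rather than any single deep step.
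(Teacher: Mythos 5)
Your proposal is correct and follows essentially the same route as the paper: the paper also establishes the biadjunction by birepresenting $\EFCat(\efl,(-)\hmod)$ via the chain $\EFCat_L(\tmod,\efl)\simeq\CoClan(\clant\op,\czer{\efl})$ obtained by restricting the universal property of $\tmod$ (Theorem~\ref{thm:univ-prop-tmod}) and then dualizing with Lemma~\ref{lem:ef-ladj}. The only difference is that the paper dismisses the restriction step as ``easy to see,'' whereas you spell out the generator-saturation argument for extension-preservation, the retract-of-finite-colimits description of compacts, and the closure of $\czer{\efl}$ under pushouts along extensions---all of which are correct and are exactly the checks the paper leaves implicit.
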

\begin{proof}
We show that for every \efcat $\efl$, the $2$-functor
\begin{equation*}
\EFCat(\efl,(-)\hmod):\Clan\sm\op\to\Cat
\end{equation*}
is birepresented by $\czer{\efl}\op$. Given a clan $\clant$ it is easy to see
that the equivalence
\begin{equation*}
\catcocont(\tmod,{\efl})\;\simeq\;\clant\!\h\CoMod(\efl)
\end{equation*}
from Theorem~\ref{thm:univ-prop-tmod} restricts to an equivalence 
\begin{equation*}
\EFCat_L(\tmod,\efl)\;\simeq\;\CoClan\sm(\clant\op,\czer{\efl}).
\end{equation*}
Taking opposite categories on both sides we get 
\begin{equation}\label{eq:nat-equiv}
\EFCat(\efl,\tmod)\;\simeq\;\Clan\sm(\clant,\czer{\efl}\op)
\end{equation}
as required.
\end{proof}
\begin{remark}
From the construction of the natural equivalence~\eqref{eq:nat-equiv} we can 
extract explicit descriptions of the components 
\begin{equation*}
\Theta_\efl:\efl\to\clopalg\qquad\text{and}\qquad 
E_\clant:\clant\to\czer{\tmod}\op \end{equation*}
of the {unit} $\Theta$ and the {counit} $E$ of the biadjunction
\begin{equation}\label{eq:biadj}
\czer{-}\op\;:\;\EFCat\;\leftrightarrows\;\Clan\sm\op\;:\;(-)\hmod
\end{equation}
at an \efcat $\efl$ and a clan $\clant$ respectively. Specifically,
$\Theta_\efl$ is the nerve of the inclusion $J:\czer{\efl}\incl\efl$ (which is
obviously a comodel), and $E_\clant$ is $(-)\op$ of the evident corestriction of
$H:\clant\op\to\tmod$.
\end{remark}
In \Section\ref{se:cc} and \Section\ref{se:cac} we show that the
biadjunction~\eqref{eq:biadj} is \emph{idempotent} (in the sense that the
associated monad and comonad are), and characterize the fixed points on both
sides (Theorems~\ref{thm:cc-equiv} and \ref{thm:clanalg-equiv}).

\section{Cauchy complete clans and the fat small object argument}
\label{se:cc}

\begin{definition}\label{def:cauchy}
A clan $\clant$ is called \emph{Cauchy complete} if its underlying category is
Cauchy complete (\ie idempotents split), and retracts of display maps are
display maps.
\end{definition}
\begin{examples}\label{ex:cc}
\begin{enumerate}
\item\label{ex:cc:fl}Finite-limit clans are always Cauchy complete, since finite-limit
categories are and all arrows are display maps in finite-limit clans.
\item\label{ex:cc:fp}A finite-product clan is Cauchy complete if and only if idempotents split in the
underlying finite-product category, which may or may not be the case for the
presentation of a single-sorted algebraic theory $\TT$ as \emph{Lawvere theory}
(i.e.\ the opposite of the full subcategory of $\Mod(\TT)$ on \emph{finitely
generated free models}). For example the Lawvere theory of abelian groups is
Cauchy complete since all finitely presented projective abelian groups are free,
whereas the Lawvere theory of distributive lattices is \emph{not} Cauchy
complete. A non-free retract of a finitely generated free distributive lattice
may be obtained by starting with a section--retraction pair $s:\braces{0 < 1 <
2}\leftrightarrows\braces{0<1}^2:r$ in {posets}, and then taking the
distributive lattice of \emph{upper sets} on both sides, i.e.\ applying the
functor $\Pos(-,\braces{0<1}):\Pos\op\to\DLat$. Then
$\Pos(\braces{0<1}^2,\braces{0 < 1})$ is the free distributive lattice on $2$
generators, but $\Pos(\braces{0<1<1},\braces{0<1})$ is not free.

Further details on the question of Cauchy-completeness of finite-limit theories,
including a discussion of how the classical theory of \emph{Morita equivalence
of rings} fits into the picture, can be found in \cite[Sections 8,
15]{adamek2010algebraic}.
\item\label{ex:cc:cat}The clan $\mct_\Cat$ of categories {is} Cauchy complete. To see this assume
that $G$ is a finite graph and that 
$\DD$ is a retract of the free category $G^*$ on $G$. Then we know that $\DD$ is
a compact $0$-extension and we have to show that $\DD$ is free on a finite
graph. Call an arrow $f$ in $\DD$ \emph{irreducible} if it is not an identity
and in any decomposition $f = gh$, either $g$ or $h$ is an identity. Since the
factors of every non-trivial decomposition have shorter length in $G^*$, every
arrow in $\DD$ admits a decomposition into irreducible factors. Let $H$ be the
graph of irreducible arrows in $\DD$, and let $F:H^*\to\DD$ be the canonical
functor. Then $F$ is full since all arrows in $D$ are composites of
irreducibles, and it admits a section $K:\DD\to H^*$ since $\DD$ is a
$0$-extension. As a section, $K$ sends arrows in $\DD$ to decompositions into
irreducibles, thus it sends irreducible arrows to themselves. It follows that
$K(F(j))=j$ for generators $j$ in $H$, and from this we can deduce that $K\circ
F=\id_{H^*}$. Thus, $\DD\cong H^*$. Finiteness of $H$ follows from compactness.

This argument is an adaption of a similar argument for
monoids~\cite{head1982/83expanded}.

\item\label{ex:cc:czer}For every \efcat $\efl$, the clan $\czer{\efl}\op$ (Definition \ref{def:czer}) is Cauchy
complete, since compact objects and extensions are closed under retracts.
\end{enumerate}
\end{examples}
By Example~\ref{ex:cc}\ref{ex:cc:czer}, Cauchy completeness is a necessary
condition for the counit $E_\clant:\clant\to\czer{\tmod}\op$ of the
biadjunction~\eqref{eq:biadj} to be an equivalence. We will show that it is also
sufficient, but for this we need the notion of \emph{flat model}, and the
\emph{fat small object argument}.

Recall that for small $\CC$, a functor $F:\CC\to\Set$ is called \emph{flat} if
$\El(F)$ is filtered, or equivalently if $F\reali:[\CC\op,\Set]\to\Set$ preserves
finite limits~\cite[Definition 6.3.1 and Proposition 6.3.8]{borceux1}. From the
second characterization it follows that flat functors preserve all finite limits
that exist in $\catc$, thus for the case of a clan $\clant$, flat functors
$F:\clant\to\Set$ are always models. We refer to them as \emph{flat} models:

\begin{definition}\label{def:flat-model} A model $A:\clant\to\Set$ of a clan
$\clant$ is called \emph{flat}, if $\El(F)$ is filtered.
\end{definition}
\begin{lemma}\label{lem:flat-filtered}A $\clant$-model $A$ is flat iff it is a filtered colimit of representable
models. 
\end{lemma}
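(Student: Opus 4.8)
The plan is to deduce both implications from the canonical presentation of an arbitrary model as a colimit of $\hom$-models indexed by its category of elements, combined with two standard facts about flat functors recorded in~\cite{borceux1}: that representable functors are flat, and that flat functors are closed under filtered colimits. So the first step is to make the canonical presentation explicit. By Theorem~\ref{thm:univ-prop-tmod}\ref{thm:univ-prop-tmod:equiv} applied with $\mfx=\tmod$, precomposition with $H$ is an equivalence $\catcocont(\tmod,\tmod)\simeq\clant\!\h\CoMod(\tmod)$; the restriction $H\reali|_{\tmod}$ is cocontinuous and satisfies $H\reali|_{\tmod}\circ H=H$, exactly like the identity functor, so $H\reali|_{\tmod}\cong\id_{\tmod}$. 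Unwinding the coend/colimit description of realization, this says that every model $A$ is the colimit
\begin{equation*}
A\;\cong\;\colim\bigl(\El(A)\xrightarrow{\pi}\clant\op\xrightarrow{H}\tmod\bigr)
\end{equation*}
of $\hom$-models indexed by its category of elements $\El(A)$.

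For the forward implication I would argue as follows. If $A$ is flat, then by Definition~\ref{def:flat-model} the indexing category $\El(A)$ is filtered, so the canonical presentation above is by construction a filtered colimit of $\hom$-models, which is precisely the assertion. It does not matter whether this colimit is read in $\tmod$ or in $\diag\clant$, since the two agree: $\tmod$ is closed under filtered colimits in $\diag\clant$ by Remark~\ref{rem:mods}\ref{rem:mods:refl-lim-fcolim}.

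For the converse, suppose $A\cong\colim_i H(\Gamma_i)$ is a filtered colimit of $\hom$-models formed in $\tmod$. Again by Remark~\ref{rem:mods}\ref{rem:mods:refl-lim-fcolim} this colimit coincides with the one computed pointwise in $\diag\clant$, so it suffices to show that a filtered colimit of $\hom$-models in $\diag\clant$ is flat. Each $H(\Gamma_i)=\clant(\Gamma_i,-)$ is representable, hence flat, and flat functors are closed under filtered colimits; therefore $A$ is flat.

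Neither implication is long on its own. The main obstacle is purely bookkeeping around the category of elements and the location of the colimits. One must confirm that the $\El(A)$ indexing the realization colimit is literally the category whose filteredness defines flatness in Definition~\ref{def:flat-model} (a matter of reconciling the nerve--realization conventions with the flatness convention, which differ only by the direction one reads $\clant$ versus $\clant\op$), and that the filtered colimits witnessing flatness may be taken interchangeably in $\tmod$ and in $\diag\clant$. Both points are resolved by the closure of $\tmod$ under filtered colimits, so I expect no genuine difficulty beyond stating these compatibilities carefully.
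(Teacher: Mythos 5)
Your proof is correct and follows essentially the same route as the paper's: both directions rest on the canonical presentation $A\cong\colim(\El(A)\to\clant\op\xrightarrow{H}\tmod)$, together with the facts that $\hom$-models are flat and that flat functors are closed under filtered colimits in $\diag\clant$. The only difference is that the paper simply asserts the canonical presentation as standard (co-Yoneda/density), whereas you derive it from Theorem~\ref{thm:univ-prop-tmod} via $H\reali|_{\tmod}\cong\id_{\tmod}$ --- a harmless elaboration, not a different argument.
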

\begin{proof}
We always have $A=\colim(\El(A)\to\clant\op\xrightarrow{H}\tmod)$, thus if $A$
is flat then it is a filtered colimit of representable models. The other
direction follows since representable models are flat, and flat functors are
closed under filtered colimits in
$\diag\clant$~\cite[Proposition~6.3.6]{borceux1}.
\end{proof}
\begin{corollary}\label{cor:fat-small}For any clan $\clant$, the \z-extensions in $\tmod$ are flat.
\end{corollary}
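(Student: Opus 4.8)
The plan is to deduce this from the characterization of flatness in Lemma~\ref{lem:flat-filtered} by a cofibrant-replacement argument, using the fat small object argument to keep the replacement flat. First I would record an auxiliary closure property: \emph{flat models are closed under retracts in $\tmod$}. Indeed, by Lemma~\ref{lem:flat-filtered} the flat models are exactly the filtered colimits of $\hom$-models, so they form the closure of the (essentially small) full subcategory of $\hom$-models under filtered colimits; in particular this subcategory is itself closed under filtered colimits (in $\tmod$, which is closed under filtered colimits in $\diag\clant$ by Remark~\ref{rem:mods}\ref{rem:mods:refl-lim-fcolim}). Since any category with filtered colimits is idempotent complete—an idempotent splits as the colimit of its own infinite iteration, a sequential and hence filtered colimit—the flat models are closed under splitting of idempotents, and therefore under retracts.

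Next I would apply the fat small object argument (developed in Appendix~\ref{se:fsoa}) to factor the map $0\to A$, for an arbitrary model $A$, as
\begin{equation*}
0\;\extension\;W\;\fullmap\;A,
\end{equation*}
where $0\extension W$ is an extension and $W\fullmap A$ is full, \emph{with the additional feature that $W$ is flat}. The reason for invoking the \emph{fat} rather than the ordinary small object argument is precisely to secure this flatness: although the generating extensions $H(p)$ have $\hom$-models as both domain and codomain, the cell attachments of the ordinary argument (pushouts along coproducts of generators, followed by transfinite composition) need not preserve flatness, whereas the fat construction is designed so that the replacement $W$ remains a filtered colimit of $\hom$-models, hence flat by Lemma~\ref{lem:flat-filtered}.

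Finally I would specialize to a \zext $A$, i.e.\ one for which $0\to A$ is an extension (Definition~\ref{def:wfsef}). Then $0\to A$ has the left lifting property against the full map $W\fullmap A$, so the commutative square with top edge $0\to W$, left edge $0\to A$, right edge $W\fullmap A$, and bottom edge $\id_A$ admits a diagonal filler $A\to W$; its lower triangle exhibits $A$ as a retract of $W$. As $W$ is flat and flat models are closed under retracts, $A$ is flat, as desired.

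The only nontrivial ingredient is the flatness of the replacement $W$, that is, the assertion that the fat small object argument can be carried out so as to stay within the flat models; this is where the real work lies, and it is exactly what Appendix~\ref{se:fsoa} provides. Everything else—the closure of flat models under retracts and the single lifting step—is formal.
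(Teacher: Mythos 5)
Your proposal is correct and takes essentially the same route as the paper: the paper likewise uses the fat small object argument (Theorem~\ref{thm:ext-full-fact}, applied in the slice $\tmod/A$ via Proposition~\ref{prop:slice-tmod}) to produce a full map onto the \zext $A$ from a filtered colimit of $\hom$-models, splits it using the lifting property of $0\to A$ to exhibit $A$ as a retract, and concludes by closure of flat models under retracts. The only detail you leave implicit is that the factorization of $0\to A$ for an arbitrary $A$ is obtained by slicing, since Theorem~\ref{thm:ext-full-fact} as stated only covers the map to the terminal object.
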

\begin{proof}
This follows from the \emph{fat small object argument} and can be seen as a
special case of~\cite[Corollary~5.1]{makkai2014fat}, but we give a direct proof
in Appendix~\ref{se:fsoa} (Corollary~\ref{cor:zext-flat}).
\end{proof}
\begin{definition}\label{def:compact-arrow}
Let $\mfx$ be a cocomplete locally small category.
\begin{enumerate}
\item We say that an arrow $f:A\to B$ is \emph{orthogonal} to a small
diagram $D:\JJ\to\mfx$, and write $f\perp D$, if the following square is a
pullback in $\Set$.
\begin{equation*}
\tikzsetnextfilename{HahOMTXT}
\begin{mytikzcd}
    \colim_{j\in\JJ}\mfx(B,D_j)
        \ar[r,""']
        \ar[d,""]
\pnc	\mfx(B,\colim(D))
        \ar[d,""]
\\	\colim_{j\in\JJ}\mfx(A,D_j)
        \ar[r,""]
\pnc	\mfx(A,\colim(D))
\end{mytikzcd}
\end{equation*}
\item We call $f$ \emph{compact} if it is orthogonal to all small filtered
diagrams.
\end{enumerate} 
\end{definition}
\begin{lemma}\label{lem:compact-arrows}
Let $\mfx$ be a locally small cocomplete category.
\begin{inumerate}
\item\label{lem:compact-arrows:obj-compact-init-incl} An object $A\in\mfx$ is
compact in the usual sense that $\mfx(A,-)$ preserves filtered colimits, if and
only if the arrow $0\to A$ is compact in the sense of
Definition~\ref{def:compact-arrow}. 
\item\label{lem:compact-arrows:compact-cancel-compo}
If the arrow $g$ in a commutative triangle
\tikzsetnextfilename{iNHuiIsf}
$\begin{mytikzcd}[sep = small]
    A       \ar[dr,"f"']
            \ar[r,"g"]
\pnc   B       \ar[d,"h"] \\\pnc C \end{mytikzcd}$ is compact, then $f$ is compact if
and only if $h$ is compact.  In other words, compact arrow are closed under
composition and have the right cancellation property.
\item\label{lem:compact-arrows:arrow-coslice-obj}
If $f:A\to B$ is compact as an arrow in $\mfx$, then it is compact as an
object in $A/\mfx$.
\item\label{lem:compact-arrows:arrow-between-compact}If $h:B\to C$ is an arrow between compact objects in $\mfx$, then $h$ is
compact as an object in $B/\mfx$.
\end{inumerate}
\end{lemma}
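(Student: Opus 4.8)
The plan is to prove the four parts of Lemma~\ref{lem:compact-arrows} in order, leveraging the orthogonality condition in Definition~\ref{def:compact-arrow} and the fact that in the relevant square the left-hand colimit-to-hom comparison maps are always well-defined.

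For part~\ref{lem:compact-arrows:obj-compact-init-incl}, I would specialize Definition~\ref{def:compact-arrow} to $f = (0\to A)$. Since $0$ is initial, $\mfx(0, -)$ is constant at the one-point set, so the left column of the square becomes $\colim_{j}\{*\}\to\{*\}$, which is an isomorphism for filtered $\JJ$. A square with an isomorphism on one side is a pullback exactly when the opposite side is an isomorphism; here the opposite side is the comparison map $\colim_j\mfx(A,D_j)\to\mfx(A,\colim D)$. Hence $0\to A$ is compact iff this comparison is an isomorphism for all filtered $D$, which is precisely the statement that $\mfx(A,-)$ preserves filtered colimits.

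For part~\ref{lem:compact-arrows:compact-cancel-compo}, the key observation is that a commutative triangle $f = h\circ g$ induces, for each filtered diagram $D$, a commutative diagram of the four relevant squares whose horizontal composite (top over middle) is the square for $f$, with the square for $g$ on top and the square for $h$ on the bottom (reading along the two hom-of-colimit/colimit-of-hom columns). By the pasting lemma for pullbacks, if the $g$-square is a pullback then the $h$-square is a pullback iff the composite $f$-square is a pullback. Thus, given $g$ compact, $h$ is compact iff $f$ is compact, which is exactly the right-cancellation and composition-closure statement. I expect this pasting argument to be the \emph{main obstacle}: I must set up the diagram carefully so that the two squares genuinely paste along a common edge (the $\mfx(B,-)$-column), and verify the induced comparison maps are the expected ones; getting the variances and the direction of pasting right is the delicate bookkeeping.

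For parts~\ref{lem:compact-arrows:arrow-coslice-obj} and~\ref{lem:compact-arrows:arrow-between-compact}, I would unwind what compactness of an object means in the coslice category $A/\mfx$. An object of $A/\mfx$ is an arrow $a:A\to X$, and filtered colimits in $A/\mfx$ are computed on underlying objects; the hom-set $(A/\mfx)(f, a)$ fits into a pullback of $\mfx(B,X)$ against $\{a\}\subseteq\mfx(A,X)$ over $\mfx(A,X)$. Comparing this coslice-hom against a filtered colimit and comparing with Definition~\ref{def:compact-arrow} shows that $f:A\to B$ being compact as an arrow is precisely the condition that $(A/\mfx)(f,-)$ preserves filtered colimits, giving~\ref{lem:compact-arrows:arrow-coslice-obj}. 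Finally, \ref{lem:compact-arrows:arrow-between-compact} follows by combining the previous parts: if $B$ and $C$ are compact objects, then $0\to B$ and $0\to C$ are compact by~\ref{lem:compact-arrows:obj-compact-init-incl}, and applying right-cancellation from~\ref{lem:compact-arrows:compact-cancel-compo} to the triangle with legs $0\to B$, $0\to C$ and hypotenuse $h:B\to C$ shows $h$ is compact as an arrow, whence compact as an object of $B/\mfx$ by~\ref{lem:compact-arrows:arrow-coslice-obj}.
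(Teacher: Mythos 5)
Your proposal is correct and takes essentially the same route as the paper: part (i) by direct specialization (the paper calls it obvious), part (ii) by the pullback pasting lemma, part (iii) by exhibiting coslice hom-sets as fibers of $f^*:\mfx(B,X)\to\mfx(A,X)$ and comparing with the arrow-compactness square---which is exactly the paper's pullback-cube in $\Set^\bbtwo$---and part (iv) assembled from the first three just as in the paper. The only caveats are minor: in (iii) you assert an equivalence where only the forward implication is needed (the paper remarks the converse is true but more awkward), and your ``comparing'' step silently uses that filtered colimits commute with fibers in $\Set$ together with connectedness of the index category to obtain a single class $\overline{\gamma}\in\colim_i\mfx(A,D_i)$, which is precisely what the paper's side squares of the cube make explicit.
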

\begin{proof}
\ref{lem:compact-arrows:obj-compact-init-incl} is obvious, and 
\ref{lem:compact-arrows:compact-cancel-compo} follows from the pullback lemma.

For \ref{lem:compact-arrows:arrow-coslice-obj} assume that $f$ is compact as an
arrow in $\mfx$ and consider a filtered diagram in $A/\mfx$, given by a filtered
diagram $D:\II\to \mfx$ and a cocone $\gamma=(\gamma_i:A\to D_i)_{i\in\II}$.
Note that since the forgetful functor $A/\mfx\to\mfx$ creates connected
colimits, we have $\colim(\gamma):A\to\colim(D)$. Also because $\II$ is
connected, all $\gamma_i$ are in the same equivalence class in
$\colim_{i\in\II}\mfx(A,D_i)$, which we denote by
$\overline{\gamma}:1\to\colim_{i\in\II}\mfx(A,D_i)$. We have to show that the
canonical map
\begin{equation*}
\textstyle\colim_i(A/\mfx)(f,\gamma_i)\stoo(A/\mfx)(f,\colim(\gamma))
\end{equation*}
is a bijection. 
This follows because this function can be presented by a pullback in
$\Set^\bbtwo$ as in the following diagram.
\begin{equation*}
\tikzsetnextfilename{hfMHhIbt}
\begin{mytikzcd}[column sep = -25, row sep = 5]
\pnc   \colim_i\mfx(B,D_i)
            \ar[rr]
            \ar[dd]
\pnc\pnc  \mfx(B,\colim(D))
\ar[dd]
\\   |[gray]|\colim_i(A/\mfx)(f,\gamma_i)
            \ar[ru,gray]
            \ar[dd,gray]
            \ar[rr,crossing over,gray]
\pnc\pnc   |[gray]|(A/\mfx)(f,\colim(\gamma))
            \ar[ru,gray]
\\\pnc \colim_i\mfx(A,D_i)
            \ar[rr]
\pnc\pnc  \mfx(A,\colim(D))
\\  1       
            \ar[ru,"{\overline\gamma}"]
            \ar[rr]
\pnc\pnc  1
            \ar[ru,"\colim(\gamma)"']
            \ar[from=uu,crossing over,gray]
\end{mytikzcd}
\end{equation*}
The front square is a pullback since the back one is by compactness of $f$ as an
arrow, and since the side ones are pullbacks by construction. Thus the gray
horizontal arrow is a bijection since $1\to 1$ is.

Finally, claim \ref{lem:compact-arrows:arrow-between-compact} now follows
directly from \ref{lem:compact-arrows:obj-compact-init-incl},
\ref{lem:compact-arrows:compact-cancel-compo}, and
\ref{lem:compact-arrows:arrow-coslice-obj}.
\end{proof}
\begin{remark}
One can show the implication of
Lemma~\ref{lem:compact-arrows}\ref{lem:compact-arrows:arrow-coslice-obj} is
actually an equivalence, \ie $f:A\to B$ is compact as an arrow if and only if it
is so as an object of the coslice category, but the other direction is more
awkward to write down and we don't need it.
\end{remark}

\begin{theorem}\label{thm:cc-equiv}
If $\clant$ is a Cauchy complete clan, then $E_\clant:\clant\to\czer{\tmod}\op$
is an equivalence of clans.
\end{theorem}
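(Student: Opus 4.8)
The plan is to show that $E_\clant$ is fully faithful, essentially surjective on objects, and both preserves and reflects display maps; a clan morphism with these three properties is automatically an equivalence in $\Clan$, since its essential inverse, being an equivalence of categories, preserves all limits (hence the terminal object and pullbacks of display maps) and preserves display maps exactly because $E_\clant$ reflects them. Full faithfulness is immediate: by construction $E_\clant$ is $(-)\op$ of the corestriction of $H:\clant\op\to\tmod$ to the full subcategory $\czer{\tmod}$ of \czexts, and $H$ is fully faithful by Remark~\ref{rem:mods}\ref{rem:mods:clan-to-mods}. Preservation of display maps is Remark~\ref{rem:wfsef}\ref{rem:wfsef:display-extension}, which says that $H(p)$ is an extension for every display map $p$. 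The two substantial points are therefore essential surjectivity and reflection of display maps.

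For essential surjectivity I would isolate the following key lemma, valid for \emph{any} Cauchy complete clan: every \czext of $\tmod$ is isomorphic to a $\hom$-model. Given such an object $A$, it is in particular a \zext, hence flat by Corollary~\ref{cor:fat-small}, so by Lemma~\ref{lem:flat-filtered} it is a filtered colimit $A=\colim_i H(\Gamma_i)$ of $\hom$-models. Compactness of $A$ lets $\id_A$ factor through a coprojection $H(\Gamma_i)\to A$, which exhibits $A$ as a retract of $H(\Gamma_i)$. The associated idempotent on $H(\Gamma_i)$ transports along the fully faithful $H$ to an idempotent $\epsilon$ on $\Gamma_i$ in $\clant$; since $\clant$ is Cauchy complete, $\epsilon$ splits through some object $\Gamma$, and as $H$ is a functor it carries this splitting to a splitting of the idempotent on $H(\Gamma_i)$. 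By uniqueness of idempotent splittings, $A\cong H(\Gamma)$, as desired.

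Reflection of display maps is the main obstacle, and I would handle it by a bootstrapping argument through the coslice presentation of Proposition~\ref{prop:repr-coslice-tmod}. Suppose $H(f):H(\Gamma)\to H(\Gamma')$ is an extension for some $f:\Gamma'\to\Gamma$ in $\clant$. Then the pair $(H(\Gamma'),H(f))$ is a \zext of the coslice $H(\Gamma)/\tmod$, and it is compact there by Lemma~\ref{lem:compact-arrows}\ref{lem:compact-arrows:arrow-between-compact}, since $H(f)$ is an arrow between the compact objects $H(\Gamma)$ and $H(\Gamma')$. Transporting along the equivalence $\clant_\Gamma\hmod\simeq H(\Gamma)/\tmod$ of Proposition~\ref{prop:repr-coslice-tmod}, which identifies the two extension--full weak factorization systems, it becomes a \czext of $\clant_\Gamma\hmod$. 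The crucial observation is that $\clant_\Gamma$ is again Cauchy complete: an idempotent over $\Gamma$ splits in $\clant$, and its splitting, being a retract of the display map it lives over, is again a display map by the retract-closure clause of Definition~\ref{def:cauchy}, so $\clant_\Gamma$ has split idempotents and inherits the retract-closure of display maps. Applying the key lemma to $\clant_\Gamma$ identifies $(H(\Gamma'),H(f))$ with the image under $(\Gamma/H)\reali$ of a representable $\hom$-model of $\clant_\Gamma$, namely with a pair $(H(\Theta),H(q))$ for a display map $q:\Theta\dito\Gamma$. Unwinding the resulting isomorphism through full faithfulness of $H$ yields $f=q\circ\psi$ for an isomorphism $\psi$, whence $f$ is a display map by the closure axioms of Definition~\ref{def:clan}.

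Combining these, $E_\clant$ is a fully faithful, essentially surjective clan morphism that both preserves and reflects display maps, and is therefore an equivalence of clans. I expect the delicate points to be the verification that $\clant_\Gamma$ inherits Cauchy completeness and the careful bookkeeping of variances when transporting the \czext across the coslice equivalence and back along $H$; the flatness input supplied by the fat small object argument (Corollary~\ref{cor:fat-small}) is the one genuinely external ingredient on which the entire argument rests.
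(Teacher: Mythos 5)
Your proposal is correct and follows essentially the same route as the paper's proof: the same two-step structure of (a) showing every \czext is representable via flatness (Corollary~\ref{cor:fat-small}), compactness, and Cauchy completeness, and (b) reflecting display maps by transporting $H(f)$ through Lemma~\ref{lem:compact-arrows}\ref{lem:compact-arrows:arrow-between-compact} and the coslice equivalence of Proposition~\ref{prop:repr-coslice-tmod} and reapplying step (a) to $\clant_\Gamma$. Your explicit verifications that $\clant_\Gamma$ inherits Cauchy completeness and that a fully faithful, essentially surjective, display-reflecting clan morphism is an equivalence in $\Clan$ are steps the paper leaves implicit, and they are carried out correctly.
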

\begin{proof}
Let $C\in\tmod$ be a \czext. Then by Corollary~\ref{cor:fat-small}, $C$ is a
filtered colimit of representable models, and since $C$ is compact the identity
$\id_C$ factors through one of the colimit inclusions, whence $C$ is a retract
of a representable model. By Cauchy completeness, $C$ is thus itself
representable, i.e.\ we have an equivalence of categories.

It remains to show that $E_\clant$ reflects extensions to display maps. Assume
$f:\Delta\to \Gamma$ in $\clant$ such that $H(f): H(\Gamma)\to H(\Delta)$ is an
extension. Then $H(f)$ is compact in $H(\Gamma)/\tmod$ by
Lemma~\ref{lem:compact-arrows}\ref{lem:compact-arrows:arrow-between-compact} and
$H(\Gamma)/\tmod\simeq \clant_\Gamma\hmod$ by
Proposition~\ref{prop:repr-coslice-tmod}. This means that the object
corresponding to $H(f)$ in $\clant_\Gamma\hmod$ is a compact \z-extension, and
thus it is isomorphic to a representable model $\clant_\Gamma(d,-)$ for a
display map $d:\Theta\dito\Gamma$ by the argument in the first part of the
proof. This means that $f$ is isomorphic to $d$ over $\Gamma$, and therefore a
display map.
\end{proof}
The preceding proposition together with Example~\ref{ex:cc}\ref{ex:cc:czer}
shows that the pseudomonad on $\Clan$ induced by the biadjunction
\eqref{eq:biadj} is \emph{idempotent}: applying the pseudomonad once produces a
Cauchy complete clan, and applying it again gives something equivalent. By
general facts about (bi)adjunctions, the induced pseudomonad on $\EFCat$ is also
idempotent. In the next section we characterize its fixed points as being
\emph{clan-algebraic categories}.

\section{Clan-algebraic categories}\label{se:cac}

\begin{definition}\label{def:cac}
An \efcat $\efl$ is called \emph{clan-algebraic} if 
\begin{description}
\item[\namedlabel{def:cac:d}{(D)}] the inclusion
$J:\czer{\efl}\hookrightarrow\efl$ is dense,
\item[\namedlabel{def:cac:cg}{(CG)}] the w.f.s.\ $\wfsef$ is cofibrantly
generated by $\classe\cap\czer{\efl}$, and
\item[\namedlabel{def:cac:fq}{(FQ)}] equivalence relations $\angs{p,q}:R\mono
A\x A$ in $\efl$ with \emph{full components} $p,q$ are effective, and have full
coequalizers.
\end{description}
A \emph{clan-algebraic weak factorization system} is a w.f.s.\ on a l.f.p.\
category $\efl$ making $\efl$ into a clan-algebraic category.
\end{definition}
\begin{theorem}\label{thm:tmod-clanalg}
The category $\tmod$ is clan-algebraic for every clan $\clant$.
\end{theorem}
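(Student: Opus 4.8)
The plan is to verify the three conditions \ref{def:cac:d}, \ref{def:cac:cg}, and \ref{def:cac:fq} of Definition~\ref{def:cac} for $\efl=\tmod$, the exactness condition \ref{def:cac:fq} being the only substantial one. For density \ref{def:cac:d}, I would first recall that the representable models already form a dense subcategory: every model satisfies $A=\colim(\El(A)\to\clant\op\xrightarrow{H}\tmod)$ (as used in the proof of Lemma~\ref{lem:flat-filtered}), so the $\hom$-models are dense in $\tmod$. Since each $H(\Gamma)$ is a \zext by Remark~\ref{rem:wfsef}\ref{rem:wfsef:display-extension}, the representables are contained in $\czer{\tmod}$, and a full subcategory containing a dense subcategory is itself dense; hence $J$ is dense. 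For \ref{def:cac:cg}, I would observe that by Remark~\ref{rem:wfsef}\ref{rem:wfsef:display-extension} the w.f.s.\ $\wfsef$ is cofibrantly generated by the arrows $H(p)$ for display maps $p$, and each such generator is an extension between \zexts, hence lies in $\classe\cap\czer{\tmod}$. As these already generate the w.f.s., enlarging the generating set to all of $\classe\cap\czer{\tmod}$ (still a set of extensions, so contained in the left class) leaves the right class $\classf$ unchanged, and \ref{def:cac:cg} follows.

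The heart of the argument is \ref{def:cac:fq}. Given an equivalence relation $\angs{d_0,d_1}:R\mono A\times A$ in $\tmod$ with full components $d_0,d_1$, I would form the pointwise quotient $Q$ with $Q(\Gamma)=A(\Gamma)/R(\Gamma)$; this is well-defined because $\tmod\incl\diag\clant$ creates limits, so each $R(\Gamma)$ is a genuine equivalence relation on the set $A(\Gamma)$. The key step, which I expect to be the main obstacle, is to show that $Q$ is again a model, that is, that the pointwise quotient preserves display-pullbacks (preservation of the terminal object being immediate). Fixing a display-pullback square in $\clant$, I would check that the canonical comparison $Q(\Delta')\to Q(\Delta)\times_{Q(\Gamma)}Q(\Gamma')$ is a bijection. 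Injectivity uses only that $R$ is a subobject of $A\times A$ preserving display-pullbacks: two relating elements for the projected pairs glue, via $R(\Delta')=R(\Delta)\times_{R(\Gamma)}R(\Gamma')$, to a single element of $R(\Delta')$ relating the original pair, the agreement of their images in $R(\Gamma)$ being forced by monicity.

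Surjectivity is where fullness enters essentially. To realize a compatible pair of classes by an element of $Q(\Delta')$, I would use the weak-pullback characterization of full maps (Remark~\ref{rem:wfsef}\ref{rem:wfsef:wpb-regepi}) to replace a chosen representative by a related one lying over the prescribed base element; the two representatives then lie over a common point of $A(\Gamma)$, so the pullback property of the model $A$ produces the required lift in $A(\Delta')$. This lifting of related pairs along display maps is precisely the content of fullness of the components.

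Once $Q$ is known to be a model, I would conclude that the coequalizer of $R\rightrightarrows A$ in $\tmod$ is computed pointwise, since it is a colimit in $\diag\clant$ that lands in the full subcategory $\tmod$. The quotient map $c:A\to Q$ is then full: it is pointwise surjective, and its naturality squares at display maps are weak pullbacks by the very same fullness-driven lifting used for surjectivity above. Finally, effectiveness follows because equivalence relations are effective in $\diag\clant$ and $\tmod$ creates limits: the pointwise kernel pair of $c$ is exactly $R$, so $R$ is the kernel pair of its coequalizer in $\tmod$. This verifies \ref{def:cac:fq} and completes the proof.
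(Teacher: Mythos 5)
Your proposal is correct and takes essentially the same approach as the paper: the paper likewise treats \ref{def:cac:d} and \ref{def:cac:cg} as straightforward and verifies \ref{def:cac:fq} by forming the pointwise quotient, using fullness of the components (the weak-pullback lifting along display maps) for existence of lifts and the fact that $R$ is itself a model for uniqueness, with effectivity and fullness of the quotient map following pointwise. Your extra detail on density of $\czer{\tmod}$ and on enlarging the generating set for \ref{def:cac:cg} fills in exactly what the paper leaves implicit.
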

\begin{proof}
Conditions \ref{def:cac:d} and \ref{def:cac:cg} are straightforward. For for
\ref{def:cac:fq} let $\langle p,q\rangle : R\mono A\times A$ be an equivalence
relation with full components. This means that we have an equivalence relation
$\sim$ on each $A(\Gamma)$, such that
\begin{itemize}
\item for all arrows $s : \Delta\to\Gamma$, the function $A(s) = s\cdot(-) :
A(\Delta)\to A(\Gamma)$ preserves this relation, and
\item for every display map $p : \Gamma^+\dito\Gamma$ and all $a,b\in A(\Gamma)$
and $c\in A(\Gamma^+)$ such that $a\sim b$ and $p\cdot c = a$, there exists a
$d\in A(\Gamma^+)$ with $c\sim d$ and $p\cdot d=b$.
\end{itemize}
We show first that the pointwise quotient $A/R$ is a model. Clearly
$(A/R)(1)=1$, and it remains to show that given a pullback
\begin{equation*}
\tikzsetnextfilename{sMBFcJCW}
\begin{mytikzcd}
    \Delta^+\ar[r,"t"]
            \ar[d,"g",-{Triangle[open]}] 
\pnc   \Gamma^+\ar[d,"f",-{Triangle[open]}] 
\\  \Delta  \ar[r,"s"] 
\pnc   \Gamma
\end{mytikzcd}
\end{equation*}
with $f$ and $g$ display maps, and elements $a\in A(\Delta)$, $b\in A(\Gamma^+)$
with $s\cdot a\sim f\cdot b$, there exists a unique-up-to-$\sim$ element $c\in
A(\Delta^+)$ with $g\cdot c\sim a$ and $t\cdot c\sim b$. Since $f$ is a display
map, there exists a $b'$ with $b\sim b'$ and $v\cdot b'=s\cdot a$, and since $A$
is a model there exists therefore a $c$ with $q\cdot c = a$ and $t\cdot c = b'$.
For uniqueness assume that $c,c'\in A(\Delta^+)$ with $q\cdot c\sim q\cdot c'$
and $t\cdot c\sim t\cdot c'$. Then $c\sim c'$ follows from the fact that $R$ is
a model. This shows that $A/R$ is a model, and also that the quotient is
effective, since the kernel pair is computed pointwise. The fact that $A\to A/R$
is full is similarly easy to see.
\end{proof}
The following counterexample shows that conditions \ref{def:cac:d} and
\ref{def:cac:cg} alone are not sufficient to characterize categories $\tmod$.
\begin{example}\label{def:non-calg}
\newcommand{\Inj}{\mathsf{Inj}} Let $\Inj$ be the full subcategory of $\^\bbtwo$
on injections, and let $\wfsef$ be the w.f.s.\ generated by $0\to\yo(0)$ and
$0\to\yo(1)$. Then $\wfsef$ satisfies \ref{def:cac:d} and \ref{def:cac:cg}, and
$\classf$ consists precisely of the pointwise surjective maps, in particular it
contains all split epis. However, the equivalence relation on $\id_2$ which is
discrete on the domain and codiscrete on the codomain is not effective.
\end{example}
The following is a restatement of
Remark~\ref{rem:wfsef}\ref{rem:wfsef:wpb-regepi} for clan-algebraic categories.
\begin{lemma}\label{lem:calg-full-repi}
Full maps in clan-algebraic categories are regular epimorphisms.
\end{lemma}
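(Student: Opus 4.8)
The plan is to exhibit an arbitrary full map $f\colon A\fullmap B$ as the coequalizer of its own kernel pair, extracting the coequalizer from \ref{def:cac:fq} and identifying it with $f$ by means of the density condition \ref{def:cac:d}. The concrete pointwise-surjectivity argument of Remark~\ref{rem:wfsef}\ref{rem:wfsef:wpb-regepi} is replaced by its abstract counterpart: a \zext is projective with respect to full maps, since $0\to X$ being an extension forces every full map to have the \rlp against it, so every arrow out of a \zext lifts along any full map.

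First I would form the kernel pair $\angs{p,q}\colon R\mono A\x A$ of $f$ (which exists since $\efl$ is complete) and note that it is an equivalence relation with \emph{full} components: $p$ and $q$ are base changes of $f$ along $f$, and the right class $\classf$ of a weak factorization system is stable under pullback. Hence \ref{def:cac:fq} applies and yields a full coequalizer $c\colon A\fullmap Q$ of the pair $(p,q)$; in particular $c p=c q$. Since $f p=f q$, the map $f$ factors uniquely as $f=g\circ c$ for some $g\colon Q\to B$, and it remains to prove that $g$ is an isomorphism, for then $f\cong c$ is a regular epimorphism.

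To show that $g$ is invertible I would verify that $\efl(X,g)$ is a bijection for every \zext $X$. Surjectivity is immediate: given $\xi\colon X\to B$, lift it through the full map $f$ to some $\tilde\xi\colon X\to A$ and put $w=c\tilde\xi$, so that $g w=f\tilde\xi=\xi$. For injectivity, suppose $w_1,w_2\colon X\to Q$ satisfy $g w_1=g w_2$; here I would use that $c$ is full to lift each $w_i$ through $c$ to $\tilde w_i\colon X\to A$ with $c\tilde w_i=w_i$. Then $f\tilde w_1=g c\tilde w_1=g w_1=g w_2=f\tilde w_2$, so $\angs{\tilde w_1,\tilde w_2}$ factors through the kernel pair $R$ via some $h\colon X\to R$ with $p h=\tilde w_1$ and $q h=\tilde w_2$; consequently $w_1=c p h=c q h=w_2$. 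Thus $\efl(X,g)$ is bijective for all $X\in\czer{\efl}$.

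Finally, by \ref{def:cac:d} the nerve functor $\efl\to[\czer{\efl}\op,\Set]$, $Y\mapsto\efl(J(-),Y)$, is fully faithful; it sends $g$ to a pointwise bijection, hence to an isomorphism of presheaves, and being fully faithful it reflects isomorphisms, so $g$ is an isomorphism and $f$ is a regular epimorphism. I expect the last step to be the main obstacle, or rather the point requiring care: one cannot conclude via a naive regular-epi/mono factorization, because clan-algebraic categories need not be regular (for instance $\Cat$, whose regular epimorphisms fail to be pullback-stable). Density \ref{def:cac:d} is exactly what upgrades the pointwise bijections on \zexts to a genuine isomorphism in $\efl$, playing the role that the explicit pointwise computation played in the case $\efl=\tmod$.
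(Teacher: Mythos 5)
Your argument is correct, but it takes a genuinely different route from the paper's, and the difference is worth noting. You work intrinsically in $\efl$: you form the kernel pair of $f$, observe its components are full (right classes of weak factorization systems being pullback-stable), invoke \ref{def:cac:fq} to get a full coequalizer $c$, and then use density \ref{def:cac:d} to show the comparison map $g$ with $g c=f$ is invertible, by checking that $\efl(X,g)$ is bijective for $0$\nobreakdash-extensions $X$ and using that the fully faithful nerve reflects isomorphisms. Each step checks out: kernel pairs are monic equivalence relations, so \ref{def:cac:fq} applies; the lifts exist because $0\to X$ is an extension; and your injectivity step correctly exploits that $R$ is the kernel pair of $f$ itself, so you never even need the effectivity half of \ref{def:cac:fq}, only the full-coequalizer half. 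The paper instead transfers the problem along the nerve: lifting against \zexts makes $\Theta_\efl(f)=J\nerve(f)$ componentwise surjective in $\clopalg$, hence the coequalizer of its kernel pair there, and then $J\reali$ (a left adjoint, so preserving regular epis) brings this back to $\efl$ via $J\reali\circ J\nerve\cong\id$, which is exactly condition \ref{def:cac:d}. The substantive difference is in the hypotheses consumed: the paper's proof uses \emph{only} \ref{def:cac:d} --- a point the author flags in the remark immediately following the proof --- so the conclusion holds in any \efcat satisfying density alone, including non-clan-algebraic ones such as Example~\ref{def:non-calg}; your proof additionally spends the exactness axiom \ref{def:cac:fq}. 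What yours buys in exchange is a self-contained argument inside $\efl$, avoiding the detour through $\clopalg$, together with an explicit exhibition of $f$ as (isomorphic to) the full coequalizer of its own kernel pair --- though since $\efl$ has kernel pairs, that refinement also follows formally from the paper's weaker-hypothesis statement. Your closing caution is also well taken: since clan-algebraic categories like $\Cat$ are not regular, one cannot shortcut via a pullback-stable regular-epi/mono factorization, and density is indeed the mechanism that upgrades pointwise bijectivity on \zexts to an isomorphism.
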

\begin{proof}
Given a full map in a clan-algebraic category $\efl$, the lifting property
against \z-extensions implies that $\Theta_\efl(f)=J\nerve(f)$ is componentwise
surjective in $\clopalg$, and therefore the coequalizer of its kernel pair.
Since left adjoints preserve regular epis, we deduce that $J\reali(J\nerve(f))$
is regular epic in $\efl$ and the claim follows since $J\reali\circ J\nerve\cong
\id$ by \ref{def:cac:d}.
\end{proof}
\begin{remark}
Observe that we only used property \ref{def:cac:d} in the proof, no exactness. 
\end{remark}
\begin{lemma}\label{lem:cac-full-rcancel}
The class $\classf$ of full maps in a \cac $\efl$ has the 
\emph{right cancellation property}, \ie we have $g\in\classf$ whenever 
$gf\in\classf$ and $f\in\classf$ for composable pairs $f:A\to B$, 
$g:B\to C$.
\end{lemma}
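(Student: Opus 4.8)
The plan is to check directly that $g$ has the \rlp against the generators of the weak factorization system, and then to appeal to \ref{def:cac:cg}. Since $\wfsef$ is cofibrantly generated by $\classe\cap\czer{\efl}$, the class $\classf$ is precisely the class of maps with the \rlp against all extensions $e:X\codito Y$ between \zexts. It therefore suffices to solve an arbitrary lifting problem of $g$ against such an $e$; the key insight is that fullness of $f$ and of $gf$ can be chained into a two-stage lift.

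Concretely, consider a lifting problem with top map $u:X\to B$ and bottom map $v:Y\to C$, so that $gu=ve$. I would produce the required diagonal in two steps. First, since $X$ is a \zext, the map $0\to X$ lies in $\classe$; as $f\in\classf$ has the \rlp against it, I lift $u$ through $f$ to obtain $\tilde u:X\to A$ with $f\tilde u=u$. Second, the outer square formed by $\tilde u:X\to A$, $e:X\codito Y$, $v:Y\to C$, and $gf:A\to C$ commutes, because $gf\tilde u=gu=ve$; since $e\in\classe$ and $gf\in\classf$, this square admits a lift $\tilde w:Y\to A$ with $\tilde w e=\tilde u$ and $gf\tilde w=v$. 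Setting $w=f\tilde w:Y\to B$ then yields $we=f\tilde u=u$ and $gw=gf\tilde w=v$, so $w$ solves the original lifting problem.

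As $e$ was an arbitrary generator, this shows $g\in\classf$, completing the argument. It is worth noting that the proof uses only the cofibrant generation \ref{def:cac:cg} together with the fact that the domains of the generators are \zexts; the density condition \ref{def:cac:d} and the exactness condition \ref{def:cac:fq} play no role, and in particular \cref{lem:calg-full-repi} is not needed. The only point demanding care is the bookkeeping of the two liftings---confirming that the second square genuinely commutes and that $w=f\tilde w$ satisfies both triangle identities---but this is a routine diagram chase rather than a genuine obstacle.
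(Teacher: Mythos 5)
Your proof is correct and essentially identical to the paper's own argument: after reducing via \ref{def:cac:cg} to lifting problems against the generating extensions between compact \zexts, you first lift the top map through $f$ (using that its domain is a \zext and $f$ is full), then fill the resulting outer square against the full map $gf$, and take the composite of $f$ with that filler as the diagonal---exactly the two-step lift in the paper, including the observation that only \ref{def:cac:cg} is needed. The one cosmetic imprecision is describing the generators as extensions between \zexts rather than between \emph{compact} \zexts as \ref{def:cac:cg} specifies, but this is harmless since your argument uses only that the domain of the generator is a \zext.
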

\begin{proof} By \ref{def:cac:cg} it suffices to show that $g$ has the
\rlp \wrt extensions $e:I\extension J$ between \czexts $I,J$. Let 
\begin{equation*}
\tikzsetnextfilename{DlKJVzKa}
\begin{mytikzcd}
    I       \ar[d,{Triangle[open,reversed]->},"e"]
            \ar[r,"h"]
\pnc   B       \ar[d,"g"]
\\  J       \ar[r,"k"]
\pnc   C
\end{mytikzcd}
\end{equation*}
be a filling problem.
Since $I$ is a \z-extension and $f$ is full, there exists a map $h':I\to A$
with $fh'=h$. We obtain a new filling problem
\begin{equation*}
\tikzsetnextfilename{IAUqkgmJ}
\begin{mytikzcd}
    I       \ar[d,{Triangle[open,reversed]->},"e"]
            \ar[r,"h'"]
\pnc   A       \ar[d,"gf",two heads]
\\  J       \ar[r,"k"]
\pnc   C
\end{mytikzcd}
\end{equation*}
which can be filled by a map $m:J\to A$ since $gf$ is full. Then $m'=fm$ is a 
filler for the original problem (the upper triangle commutes since $m'e=fme=fh'=h$).
\end{proof}
\begin{lemma}\label{lem:imfac} Let $\efl$ be a clan-algebraic category, let
$f:A\to B$ be an arrow in $\efl$ with componentwise full kernel pair $p,q:R\epi
A$, and let $e:A\epi C$ be the coequalizer of $p$ and $q$. Then the unique
$m:C\to B$ with $me=f$ is monic.
\end{lemma}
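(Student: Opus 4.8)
The plan is to prove that $m$ is monic by showing that its kernel pair has equal legs. Write $K=C\x_B C$ for the kernel pair of $m$, with projections $k_1,k_2:K\to C$. Since $m$ is monic exactly when $k_1=k_2$, it suffices to exhibit an epimorphism $\bar e$ satisfying $k_1\bar e=k_2\bar e$.

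First I would observe that the coequalizer $e:A\epi C$ is \emph{full}. Indeed, $p,q$ is the kernel pair of $f$, hence an equivalence relation, and its components are full by hypothesis; thus condition~\ref{def:cac:fq} applies and in particular guarantees that its coequalizer $e$ is full. (Only this fullness of $e$ is needed; effectiveness will play no role here.) Next, from $fp=fq$ we get $m(ep)=m(eq)$, so the pair $(ep,eq):R\to C$ factors uniquely through $K$, giving a map $\bar e:R\to K$ with $k_1\bar e=ep$ and $k_2\bar e=eq$. A routine diagram chase with the universal properties of the two kernel pairs shows that
\begin{equation*}\begin{tikzcd}
    R \ar[r,"\bar e"] \ar[d,"{\angs{p,q}}"'] \pnc K \ar[d,"{\angs{k_1,k_2}}"] \\
    A\x A \ar[r,"e\x e"'] \pnc C\x C
\end{tikzcd}\end{equation*}
is a pullback.

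The heart of the argument is then to see that $\bar e$ is an epimorphism, and this is where the weak factorization system does the work. Since $\classf$ is the right class of $\wfsef$, it is closed under composition and under base change. Now $e\x e=(\id_C\x e)\circ(e\x\id_A)$, and each factor is a base change of the full map $e$ along a product projection, hence full; being a composite of full maps, $e\x e$ is itself full. As $\bar e$ is the base change of $e\x e$ along $\angs{k_1,k_2}$, it too is full, hence a regular epimorphism by Lemma~\ref{lem:calg-full-repi}, and in particular an epimorphism. Finally, since $e$ coequalizes $p$ and $q$ we have $ep=eq$, that is $k_1\bar e=k_2\bar e$; cancelling the epimorphism $\bar e$ yields $k_1=k_2$, so $m$ is monic.

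The main obstacle---and the only point beyond formal bookkeeping---is establishing that $\bar e$ is epic. Everything hinges on the stability of full maps under composition and base change, which is exactly what lets us propagate fullness from $e$ through $e\x e$ to the pullback $\bar e$; Lemma~\ref{lem:calg-full-repi} then converts fullness into (regular) epiness. It is worth emphasizing that the full-components hypothesis enters only through condition~\ref{def:cac:fq} to secure the fullness of $e$, after which the rest of the proof is driven purely by the closure properties of the right class $\classf$.
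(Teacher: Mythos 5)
Your proof is correct---every step checks out---but it takes a genuinely different route from the paper's. The paper argues via generalized elements: by density \ref{def:cac:d} it suffices to test monicity of $m$ against maps $h,k:E\to C$ out of compact $0$-extensions; fullness of $e$ (obtained from \ref{def:cac:fq} exactly as you obtain it) lifts these to $h',k':E\to A$, the equation $fh'=fk'$ then factors through the kernel pair $R$, and $ep=eq$ gives $h=k$ in three lines. You instead argue purely diagrammatically: you identify $R$ as the base change of the kernel pair $K$ of $m$ along $e\times e$ (a standard pullback, which you verify correctly), propagate fullness of $e$ through the closure of $\classf$ under pullback and composition to conclude that the comparison map $\bar e:R\to K$ is full, and then invoke Lemma~\ref{lem:calg-full-repi} to make $\bar e$ epic, forcing $k_1=k_2$. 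Note that your argument is not actually independent of density: Lemma~\ref{lem:calg-full-repi} is itself proved using \ref{def:cac:d}, so (D) enters your proof once, packaged inside that lemma, whereas the paper uses it directly. What your version buys is a reusable general principle---$m$ is monic as soon as the induced map from the kernel pair of $f=me$ to the kernel pair of $m$ is epic---together with a demonstration that the stability properties of the right class suffice to establish that epicness; what the paper's version buys is brevity and the avoidance of forming $K$, $A\times A$, and $C\times C$ altogether.
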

\begin{proof}
By~\ref{def:cac:d} it is sufficient to test monicity of $m$ on maps out of
compact \z-extensions $E$. Let $h,k:E\to C$ such that $mh = mk$. Since $e$ is
full by~\ref{def:cac:fq}, there exist $h',k':E\to A$ with $eh'=h$ and $ek'=k$.
In particular we have $fh'=fk'$ and therefore there is a unique $u:E\to R$ with
$pu=h'$ and $qu=k'$. Thus we can argue
\begin{equation*}
h = eh' = epu = equ = e k' = k
\end{equation*}
which shows that $m$ is monic.
\end{proof}

\begin{lemma}\label{lem:flat-junit-iso} If $A\in\clopalg$ is flat then $A\to
J\nerve(J\reali(A))$ is an isomorphism. Thus, $J\reali$ restricted to flat
models is fully faithful.
\end{lemma}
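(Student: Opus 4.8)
The plan is to evaluate the unit $\eta_A\colon A\to J\nerve(J\reali(A))$ of the restricted nerve--realization adjunction $J\reali\adj J\nerve$ (from the remark following Proposition~\ref{prop:biadj}) objectwise. A morphism in $\clopalg$ is invertible exactly when it is so at every object of $\clop=\czer{\efl}\op$, and these objects are precisely the \czexts, each of which is compact in $\efl$. Hence it suffices to prove that for every \czext $E_0$ the component
\begin{equation*}
\eta_A(E_0)\colon A(E_0)\too\efl(E_0,J\reali(A))
\end{equation*}
is a bijection.

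To compute the two sides I would first use flatness. Since $A$ is flat, Lemma~\ref{lem:flat-filtered} presents it as a filtered colimit $A=\colim\big(\El(A)\to\czer{\efl}\xrightarrow{H}\clopalg\big)$ of $\hom$-models, where $H\colon\czer{\efl}\to\clopalg$ is the canonical embedding of the clan $\clop$ and $\El(A)$ is filtered. As $J\reali$ is cocontinuous with $J\reali\circ H=J$ (the defining property of the realization as a left Kan extension along $H$), this yields $J\reali(A)=\colim_{(E,a)\in\El(A)}J(E)$ as a filtered colimit in $\efl$. The left-hand side is then computed pointwise, since filtered colimits in $\clopalg$ are formed pointwise (Remark~\ref{rem:mods}\ref{rem:mods:refl-lim-fcolim}), so $A(E_0)=\colim_{(E,a)}\czer{\efl}(E_0,E)$; while the right-hand side unfolds, using compactness of $E_0$ in $\efl$, to $\efl(E_0,\colim_{(E,a)}J(E))=\colim_{(E,a)}\efl(E_0,E)$. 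Since $J$ is the fully faithful inclusion $\czer{\efl}\incl\efl$, the two indexed systems have canonically isomorphic terms $\czer{\efl}(E_0,E)\cong\efl(E_0,E)$.

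The step needing care is to check that $\eta_A(E_0)$ is genuinely the canonical comparison between these two colimit presentations, and not merely some map between isomorphic sets. I would confirm this by a coprojection chase through the Yoneda correspondence: an element $a\in A(E_0)$ corresponds to a map $H(E_0)\to A$, whose image under $J\reali$ is the colimit coprojection $J(E_0)=E_0\to J\reali(A)$ at the index $(E_0,a)$, and this coprojection is by definition $\eta_A(E_0)(a)$. In the pointwise description of $A(E_0)$ the element $a$ is represented by the class of $\id_{E_0}\in\czer{\efl}(E_0,E_0)$ at $(E_0,a)$, and in the compactness description of the right-hand side the same coprojection is represented by the class of $\id_{E_0}\in\efl(E_0,E_0)$ at $(E_0,a)$. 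Under the term-wise isomorphism from full faithfulness of $J$ these classes agree, so $\eta_A(E_0)$ is, up to the canonical identifications, the identity on $\colim_{(E,a)}\czer{\efl}(E_0,E)$ and in particular a bijection. As this holds for every \czext $E_0$, the unit $\eta_A$ is an isomorphism. Finally, a left adjoint whose unit is invertible on a full subcategory is fully faithful there, so invertibility of $\eta_A$ for all flat $A$ gives that $J\reali$ restricted to the flat models is fully faithful.
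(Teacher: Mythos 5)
Your proof is correct and follows essentially the same route as the paper: both evaluate the unit pointwise at \czexts, using flatness (filteredness of $\El(A)$) and compactness of $E_0$ to identify $\efl(E_0,J\reali(A))$ with $\colim_{(E,a)\in\El(A)}\czer{\efl}(E_0,E)\cong A(E_0)$, and both deduce full faithfulness by factoring $(\clopalg)(A,B)\to\efl(J\reali(A),J\reali(B))$ through the invertible unit. The only difference is presentational: the paper compresses the pointwise identification into a co-Yoneda computation $\yo(C)\ox A\cong A(C)$, leaving implicit the coprojection chase that you spell out to confirm the map in question really is the unit.
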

\begin{proof}
For the fist claim we have 
\begin{align*}
J\nerve(J\reali(A))(C) &= \efl(C,\colim(\El(A)\to \czer{\efl}\incl\efl))
\\&\cong \colim(\El(A)\to\czer{\efl}\xrightarrow{\yo(C)}\Set)
        &&\text{since $\El(A)$ is filtered}
\\&\cong \yo(C)\ox A\cong A(C).
\end{align*}
The second claim follows since for flat $B$, the mapping 
\begin{equation*}
(\clopalg)(A,B)\to \efl(J\reali (A),J\reali (B))
\end{equation*}
can be decomposed as
\begin{equation*}
(\clopalg)(A,B)\to(\clopalg)(A,J\nerve(J\reali(B)))
               \to\efl(J\reali (A),J\reali (B)).
\end{equation*}
\end{proof}

\begin{lemma}\label{lem:jfc}
The following are equivalent for a cone $\phi:\Delta C\to D$ on a diagram
$D:\catj\to\efl$ in an \efcat $\efl$.
\begin{enumerate}
\item Given an extension $e:A\to B$, an arrow $h:A\to C$, and a cone
$\kappa:\Delta B\to D$ such that $\phi_j h=\kappa_j e$ for all $j\in
\catj$, there exists $l:B\to C$ such that $le=h$ and $\phi_jl=\kappa_j$ for all
$j\in\catj$.
\tikzsetnextfilename{EbmsYGLD}
\begin{equation*}\begin{mytikzcd}
    A
        \ar[r,"h"]
        \ar[d,"e"']
\pnc	C
        \ar[d,"\phi_j"]
\\	B
        \ar[ru,"l", dashed]
        \ar[r,"\kappa_j"]
\pnc	D_j
\end{mytikzcd}\end{equation*}
\item The mediating arrow $:C\to\lim(D)$ is full.
\end{enumerate}
\end{lemma}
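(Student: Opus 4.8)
The plan is to recognize that conditions (i) and (ii) are two phrasings of one and the same lifting property, so that the lemma reduces to an application of the universal property of the limit. The key observation is that a cone on $D$ with vertex $B$ is the same datum as a map $B\to\lim(D)$, and that under this identification the lifting condition (i) becomes literally the \rlp of the mediating map against extensions.

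Concretely, I would write $\pi_j:\lim(D)\to D_j$ for the limit projections and let $m:C\to\lim(D)$ be the mediating arrow of $\phi$, so that $\phi_j=\pi_j m$ for every $j\in\catj$. By the universal property of $\lim(D)$, cones $\kappa:\Delta B\to D$ correspond bijectively and naturally to arrows $\bar\kappa:B\to\lim(D)$ determined by $\kappa_j=\pi_j\bar\kappa$. Since the projections $\pi_j$ are jointly monic, the compatibility hypothesis $\phi_j h=\kappa_j e$ (for all $j$) is equivalent to the single equation $mh=\bar\kappa e$, that is, precisely to commutativity of the square with $e$ on the left and $m$ on the right. In the same way, the desired conclusion of (i) --- a map $l:B\to C$ satisfying $le=h$ and $\phi_j l=\kappa_j$ for all $j$ --- translates, again by joint monicity of the $\pi_j$, into a map $l$ with $le=h$ and $ml=\bar\kappa$, which is exactly a diagonal filler for that square.

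Thus, as $e$ ranges over all extensions, condition (i) asserts precisely that $m$ has the \rlp against every extension. Since in a weak factorization system the right class consists exactly of the maps with the \rlp against the left class, this is exactly the statement that $m$ is full in the sense of Definition~\ref{def:efcat}, namely condition (ii). The proof is therefore purely formal; the only point demanding care is the bookkeeping of the bijection between cones with vertex $B$ and maps $B\to\lim(D)$, together with the joint monicity of the limit projections, and I do not anticipate any genuine obstacle beyond this translation.
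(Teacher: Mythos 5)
Your proposal is correct and follows essentially the same route as the paper's own (very terse) proof: translate cones with vertex $B$ into arrows $B\to\lim(D)$, observe via joint monicity of the projections that the hypothesis and conclusion of (i) become exactly a lifting problem and its filler for the mediating arrow $m:C\to\lim(D)$, and conclude by $\classf=\classe^\pitchfork$ in the weak factorization system. Your write-up merely makes explicit the bookkeeping (joint monicity, naturality of the cone--arrow bijection) that the paper leaves implicit, so there is nothing to correct.
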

\begin{proof}
The data of $e,h,\kappa$ is equivalent to $e$, $h$, and $k:B\to \lim(D)$ such
that 
\tikzsetnextfilename{XaMiOOxI}
\begin{equation*}\begin{mytikzcd}
    A
        \ar[r,"h"]
        \ar[d,"e"']
\pnc	C
        \ar[d,"f"]
\\	B
\ar[r,"k"]
\pnc	\lim(D)
\end{mytikzcd}\end{equation*}
commutes, and $l:B\to C$ fills the latter square iff it fills all the squares
with the $D_j$.
\end{proof}
\begin{definition}\label{def:jfc}
A cone $\phi$ satisfying the conditions of the lemma is called \emph{jointly full}.
\end{definition}
\begin{remark}
The interest of this is that it allows us to talk about full `covers' of limits
without actually computing the limits, which is useful when talking about cones
and diagrams in the full subcategory of a clan-algebraic category on \z-extensions, which
does not admit limits.
\end{remark}
\begin{definition}\label{def:nice}
A \emph{nice diagram} in an \efcat $\efl$ is a $2$-truncated semi\-/simplicial
diagram
\begin{equation*}
\tikzsetnextfilename{YhTtFshV}
A_\bullet\;=\;\left(\begin{mytikzcd}[column sep = large]
    A_2     \ar[r, shift left = 3.2,"d_0" description]
            \ar[r, "d_1" description]
            \ar[r, shift right = 3.2, "d_2" description]
\pnc   A_1     \ar[r, shift left = 1.6, "d_0" description]
            \ar[r, shift right = 1.6, "d_1" description]
\pnc   A_0     \end{mytikzcd}\right)
\end{equation*}
where 
\begin{enumerate}
\item\label{def:nice:exts}
$A_0$, $A_1$, and $A_2$ are \z-extensions,
\item\label{def:nice:full}
the maps $d_0,d_1:A_1\to A_0$ are full,
\item\label{def:nice:atwo}
in the commutative square\tikzsetnextfilename{XqZklABG}
$\begin{mytikzcd}[row sep = small] A_2
        \ar[r,"d_0"']
        \ar[d,"d_2"']
\pnc 	A_1
        \ar[d,"d_1"]
\\	A_1
        \ar[r,"d_0"]
\pnc 	A_0 \end{mytikzcd}$the span constitutes a jointly full cone over the cospan, 
\vspace{-3mm}

\item\label{def:nice:sym}
there exists a `symmetry' map\tikzsetnextfilename{FvJsZmRM}
$\begin{mytikzcd}[row sep = small]
    A_1
        \ar[r,"d_1"]
        \ar[d,"d_0"']
        \ar[rd,"\sigma"]
\pnc 	A_0
\\	A_0
\pnc 	A_1
        \ar[l,"d_1"']
        \ar[u,"d_0"']
\end{mytikzcd}$making the triangles commute, and 
\item\label{def:nice:atilde}
there exists a \z-extension $\tilde A$ and full maps $f,g:\tilde A\epi
A_1$ constituting a jointly full cone over the
diagram 
\begin{equation*}
\tikzsetnextfilename{oMMQjvVA}
\begin{mytikzcd}[row sep = small]
    A_1
        \ar[rd,"d_1" very near start]
        \ar[d,"d_0"']
\pnc 	A_1
        \ar[dl,"d_0" very near end, crossing over]
        \ar[d,"d_1"]
\\	A_0
\pnc 	|[alias=B]|A_0
\end{mytikzcd}.
\end{equation*}
\end{enumerate}
\end{definition}
\begin{lemma}\label{lem:factor-nice} If $A_\bullet$ is a nice diagram in a 
clan-algebraic category $\efl$, the pairing $\angs{d_0,d_1}:A_1\to A_0\times
A_0$ factors as $A_1\xrightarrow{f}R\xrightarrow{\angs{r_0,r_1}}A_0\times A_0$,
where $f$ is full and $r=\angs{r_0,r_1}$ is monic and a componentwise full
equivalence relation.
\end{lemma}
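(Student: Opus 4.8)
The plan is to realize the factorization as the image factorization of $\angs{d_0,d_1}$, with the objects $A_2$ and $\tilde A$ of the nice diagram supplying precisely the fullness data demanded by the clan-algebraic axioms. First I would form the kernel pair $k_0,k_1:K\to A_1$ of $\angs{d_0,d_1}:A_1\to A_0\times A_0$. By construction $K$ is the object of \emph{parallel pairs} $(a,a')$ with $d_0a=d_0a'$ and $d_1a=d_1a'$, and this is exactly the limit of the diagram appearing in condition~\ref{def:nice:atilde} of Definition~\ref{def:nice}. The jointly full cone provided by~\ref{def:nice:atilde} therefore amounts to a full map $h:\tilde A\to K$ whose two legs $k_0h,k_1h:\tilde A\to A_1$ are full. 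Since $h$ is full and $k_0h$, $k_1h$ are full, the right cancellation property of full maps (Lemma~\ref{lem:cac-full-rcancel}) gives that $k_0$ and $k_1$ are full. Thus the kernel pair $K\mono A_1\times A_1$ is an equivalence relation with full components.

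With this established I would apply the exactness axiom~\ref{def:cac:fq}: the coequalizer $f:A_1\to R$ of $k_0,k_1$ is full (and $K$ is effective, being a kernel pair). Lemma~\ref{lem:imfac}, applied to $\angs{d_0,d_1}$ whose kernel pair $K$ has full components, then shows that the induced comparison $r=\angs{r_0,r_1}:R\to A_0\times A_0$ is monic. This is the desired factorization $\angs{d_0,d_1}=r\circ f$, with $f$ full and $r$ a relation. That the components of $R$ are full follows again by right cancellation: from $r_if=d_i$ with $d_i$ full (condition~\ref{def:nice:full}) and $f$ full, Lemma~\ref{lem:cac-full-rcancel} yields $r_0,r_1\in\classf$.

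It remains to prove that $r:R\mono A_0\times A_0$ is an equivalence relation, for which I would verify symmetry, reflexivity, and transitivity. Symmetry is the most direct: the map $\sigma:A_1\to A_1$ of condition~\ref{def:nice:sym} sends parallel pairs to parallel pairs, so $f\sigma$ coequalizes $k_0,k_1$ and factors uniquely as $f\sigma=s f$ for some $s:R\to R$; from $d_0\sigma=d_1$ and $d_1\sigma=d_0$ one then reads off $r_0s=r_1$ and $r_1s=r_0$. For the remaining two laws I would argue with generalized elements valued in \czexts, which is legitimate by density~\ref{def:cac:d}: since $R$ is monic, a map $X\to A_0\times A_0$ factors through $R$ as soon as each of its restrictions $E\to X\to A_0\times A_0$ along a \czext $E$ does (the element-wise factorizations are unique, hence compatible, and assemble into $X\to R$ over the colimit presentation of $X$). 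The mechanism that makes such elements usable is that a \czext lifts along every full map, since $0\to E$ is an extension; in particular elements of $A_0$ lift along the full map $d_0$, elements of $R$ lift along $f$, and elements of a limit lift along any jointly full cone over it.

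For transitivity, an element $E\to R\times_{A_0}R$ presents an $R$-related pair $(x,y),(y,z)$; lifting the two components along $f$ yields $a,b:E\to A_1$ with $d_1a=y=d_0b$, so $(a,b)$ is a composable pair. Condition~\ref{def:nice:atwo} exhibits $A_2$ as a jointly full cover of composable pairs, so $(a,b)$ lifts to $w:E\to A_2$, and the semi-simplicial identities show that the composite face $d_1w:E\to A_1$ has outer endpoints $x$ and $z$; hence $(x,z)$ lies in $R$. Reflexivity is then formal: lifting an arbitrary element $x:E\to A_0$ along the full map $d_0$ produces $a$ with $d_0a=x$, so $(x,d_1a)\in R$, and symmetry followed by transitivity gives $(x,x)\in R$. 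I expect transitivity to be the main obstacle, as it is the one place where the composition datum $A_2$, the joint fullness of its cover of composable pairs, and the lifting of elements along full maps must be orchestrated together---and where the density argument that converts the element-wise relation into an honest map $R\times_{A_0}R\to R$ has to be carried out with care.
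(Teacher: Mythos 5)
Your proposal is correct and follows essentially the same route as the paper's proof: you identify the kernel pair of $\angs{d_0,d_1}$ with the limit of the diagram in condition (v), use joint fullness plus right cancellation (Lemma~\ref{lem:cac-full-rcancel}) to get full kernel-pair components, invoke \ref{def:cac:fq} and Lemma~\ref{lem:imfac} for the full--mono factorization, and derive symmetry, transitivity, and reflexivity from conditions (iv), (iii), and fullness of $d_0$ (resp.\ $r_0$) exactly as intended. The only difference is one of exposition: where the paper dismisses the equivalence-relation verification as easy, you correctly carry it out via generalized elements valued in compact $0$-extensions, lifted along full maps and globalized by density \ref{def:cac:d} through the mono $r$.
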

\begin{proof}
Condition \ref{def:nice:atilde} of the preceding definition gives us the 
following diagram
\tikzsetnextfilename{LwuSZebT}
\begin{equation*}\begin{mytikzcd}
    \tilde A
        \ar[rrd, bend left=20,"g"', two heads]
        \ar[rdd, bend right=20,"f"', two heads]
        \ar[rd, "h"', two heads]
\\[-10pt]\pnc[-10pt] 
    |[label={[overlay, label distance=-3mm]-45:\lrcorner}]|S
\ar[r,"p"']
        \ar[d,"q"]
\pnc	
    A_1
        \ar[d,"\angs{d_0,d_1}"]
\\\pnc 
    A_1
        \ar[r,"\angs{d_0,d_1}"]
\pnc	
    A_0\times A_0
\end{mytikzcd},\end{equation*}
\ie $S$ is the kernel of $\angs{d_0,d_1}$ with projections $p,q$, $\tilde A$ is
a \z-extension, and $f,g,h$ are full. By right cancellation we deduce that $p$
and $q$ are full, and the existence of the factorization follows from
Lemma~\ref{lem:imfac}. Fullness of $r_0,r_1$ follows again from right
cancellation because $f$, $d_0$, and $d_1$ are full.

It remains to show that $r$ is an equivalence relation. This is easy: condition
4 gives symmetry, and condition 3 gives transitivity, and reflexivity follows
from the fact that $r_0$ admits a section as a full map into a \z-extension,
together with symmetry (we internalize the argument that if in a symmetric and
transitive relation everything is related to \emph{something}, then it is
reflexive.)
\end{proof}

\begin{definition} A \emph{\zext replacement} of an object $A$ in an \efcat
$\efl$ is a full map $f:\overline A\fullmap A$ from a \zext $\overline A$ to
$A$.
\end{definition} 
\zext replacements can always be obtained as $\wfsef$-factorizations of 
$0\to A$.
\begin{lemma}\label{lem:resolution}
For every object $A$ in an \efcat $\efl$ there exists a nice
diagram $A_\bullet$ with colimit $A$.
\end{lemma}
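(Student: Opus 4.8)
The plan is to build a $2$-truncated Čech-style resolution of $A$ by \zexts, systematically replacing the relevant finite limits (which exist since $\efl$ is complete) by full covers obtained from \wfsef-factorizations. First I would produce $A_0$ by factoring $0\to A$ as an extension followed by a full map $\varepsilon\colon A_0\fullmap A$, so that $A_0$ is a \zext replacement of $A$. Next I form the kernel pair $\pi_0,\pi_1\colon K\to A_0$ of $\varepsilon$; as pullbacks of the full map $\varepsilon$ along itself, both $\pi_i$ are full (the right class of a \wfs is closed under pullback). Let $c\colon A_1\fullmap K$ be a \zext replacement of $K$ and set $d_i=\pi_i c$, so $A_1$ is a \zext and $d_0,d_1$ are full, establishing \ref{def:nice:exts} and \ref{def:nice:full}. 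For the symmetry \ref{def:nice:sym}, I would take the swap $\tau\colon K\to K$ of the kernel pair and lift $\tau c$ through $c$: since $A_1$ is a \zext and $c$ is full, the square with top $0\to A_1$ and bottom $A_1\xrightarrow{\tau c}K$ has a filler $\sigma\colon A_1\to A_1$ with $c\sigma=\tau c$, whence $d_0\sigma=d_1$ and $d_1\sigma=d_0$.

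For $A_2$ and condition \ref{def:nice:atwo}, let $P=A_1\times_{A_0}A_1$ be the pullback of the cospan $A_1\xrightarrow{d_1}A_0\xleftarrow{d_0}A_1$, take a \zext replacement $A_2\fullmap P$, and let $d_0,d_2\colon A_2\to A_1$ be the two projections postcomposed with the cover; the mediating map $A_2\to P$ is the full cover, so the span is a jointly full cone by Lemma~\ref{lem:jfc}. The delicate point here is the middle face $d_1\colon A_2\to A_1$: the ``outer endpoints'' map $\angs{d_0d_0,\,d_1d_2}\colon A_2\to A_0\times A_0$ factors through $K\hookrightarrow A_0\times A_0$ by transitivity of the kernel pair, and lifting it through the full cover $c$ (again using that $A_2$ is a \zext) produces $d_1$. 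From $c\,d_1=\angs{d_0d_0,\,d_1d_2}$ one then reads off the semisimplicial identities $d_0d_1=d_0d_0$ and $d_1d_1=d_1d_2$ by composing with the two projections out of $K$.

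For $\tilde A$ and condition \ref{def:nice:atilde}, the key observation is that $S:=A_1\times_{A_0\times A_0}A_1$ formed along $\angs{d_0,d_1}$ is precisely the kernel pair of $c$: since $\angs{d_0,d_1}$ factors as $A_1\xrightarrow{c}K\hookrightarrow A_0\times A_0$ through a monomorphism, and the pullback of a mono along itself is trivial, we get $S\cong A_1\times_K A_1$. Consequently the two projections $p,q\colon S\to A_1$ are pullbacks of the full map $c$, hence full. Taking a \zext replacement $h\colon\tilde A\fullmap S$ and setting $f=ph$, $g=qh$ then yields full maps $f,g\colon\tilde A\fullmap A_1$ with $h$ full, i.e.\ a jointly full cone by Lemma~\ref{lem:jfc}, as required. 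This $S$-as-kernel-pair identification is what lets condition \ref{def:nice:atilde} be met with genuinely full $f,g$ rather than merely full $h$.

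Finally I would compute the colimit. A cocone on the $2$-truncated semisimplicial diagram amounts to a single map out of $A_0$ coequalizing $d_0,d_1$, the remaining $A_2$-level conditions being forced by the simplicial identities verified above; thus $\colim A_\bullet$ is the coequalizer of $d_0,d_1\colon A_1\to A_0$. To identify it with $A$ I would invoke that full maps are regular epimorphisms (Lemma~\ref{lem:calg-full-repi}): since $c$ is full and hence epic, a map coequalizes $d_0,d_1=\pi_0 c,\pi_1 c$ exactly when it coequalizes $\pi_0,\pi_1$, and since $\varepsilon$ is a regular epi it is the coequalizer of its own kernel pair $\pi_0,\pi_1$; so the comparison map $\colim A_\bullet\to A$ induced by $\varepsilon$ is an isomorphism. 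The main obstacle is exactly this last identification: the \emph{data} \ref{def:nice:exts}--\ref{def:nice:atilde} are constructed purely from the \wfs and go through for any \efcat, but the colimit collapses to $A$ only once full covers are known to be regular epimorphisms, which is where the clan-algebraic hypothesis genuinely enters; one must also keep the middle-face simplicial identities honest so that the colimit reduces to the coequalizer of $d_0,d_1$ in the first place.
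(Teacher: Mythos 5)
Your construction is, step for step, the paper's own: the same \zext replacements $A_0\fullmap A$, $A_1\fullmap A_0\times_AA_0$, and $A_2\fullmap A_1\times_{A_0}A_1$, the same lifts along the full cover of the kernel pair to produce the middle face $d_1$ and the symmetry $\sigma$, and the same choice of $\tilde A$ as a \zext replacement of the kernel pair of the cover $A_1\fullmap A_0\times_AA_0$. Where you go beyond the paper's (very terse) proof is in actually verifying the conditions and the colimit claim: the identification $A_1\times_{A_0\times A_0}A_1\cong A_1\times_K A_1$ via monicity of the kernel-pair inclusion, the semisimplicial identities extracted from $c\,d_1=\angs{d_0d_0,d_1d_2}$ (needed so that the colimit reduces to $\mathrm{coeq}(d_0,d_1)$), and the joint-fullness conditions via Lemma~\ref{lem:jfc} --- all of which the paper leaves implicit, and all of which you handle correctly. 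Your closing caveat is also apt and sharpens the paper: the statement is given for an arbitrary \efcat, but identifying $\colim A_\bullet$ with $A$ requires full maps to be (regular) epimorphisms, which by Lemma~\ref{lem:calg-full-repi} follows from density \ref{def:cac:d} but can fail in a general \efcat (e.g.\ for the w.f.s.\ whose extensions are the isomorphisms and whose full maps are all maps, the nice diagram constructed here has colimit $0$); since the lemma is only invoked in the clan-algebraic setting of Theorem~\ref{thm:clanalg-equiv}, your restricted version suffices for its use, and is in fact the correct statement.
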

\begin{proof}
$A_0$ is constructed as a \zext replacement $f: A_0\fullmap A$ of $A$.
Similarly, $A_1$ is given by a \zext replacement $f_1:A_1\fullmap
A_0\times_AA_0$ of $A_0\times_AA_0$, and $A_2$ is a \zext replacement
$f_2:A_2\fullmap P$ of the pullback
\tikzsetnextfilename{ozfjApcR}
\begin{equation*}\begin{mytikzcd}
    |[label={[overlay, label distance=-3mm]-45:\lrcorner}]|P
        \ar[r,"p_0"]
        \ar[d,"p_1"']
\pnc	A_1
        \ar[d,"d_0"]
\\	A_1
        \ar[r,"d_1"]
\pnc	A_0
\end{mytikzcd}, 
\end{equation*}
with  $d_0,d_1,d_2:A_2\to A_1$ given by $d_0=p_0\circ f$, $d_2=p_1\circ f$, and
$d_1$ a lifting of $\angs{d_0\circ d_0,d_1\circ d_2}$ along $f_1$. The map
$\sigma$ is constructed as a lifting of the symmetry of $A_0\times_AA_0$ along
$f_1$. The object $\tilde A$ is a \zext replacement of the kernel of $f_1$.
\end{proof}

\begin{lemma}\label{lem:real-pres-nice}
For any clan-algebraic category $\efl$, the realization functor $J\reali$ preserves
jointly full cones in flat models, and nice diagrams.
\end{lemma}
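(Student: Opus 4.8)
The plan is to prove the two assertions in that order, and to note that the statement about nice diagrams reduces, point by point, to the statement about jointly full cones; so the substance of the argument is showing that $J\reali$ carries a jointly full cone of flat models to a jointly full cone.

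First I would set up the translation device: for a \emph{compact} 0-extension $\Gamma\in\czer{\efl}$ and a \emph{flat} model $A\in\clopalg$ there is a natural bijection $\efl(\Gamma,J\reali(A))\cong A(\Gamma)$, obtained from $\efl(J\reali(H(\Gamma)),J\reali(A))\cong\clopalg(H(\Gamma),A)=A(\Gamma)$, where the first isomorphism combines $J\reali\circ H=J$ with the full faithfulness of $J\reali$ on flat models (Lemma~\ref{lem:flat-junit-iso}, noting that hom-models are flat) and the second is Yoneda. I would also record that $J\reali$ is cocontinuous and preserves extensions: it sends each generating extension $H(e)$—the image under $H$ of an extension $e$ between compact 0-extensions—to $e$ itself, and commutes with the pushouts, transfinite composites, and retracts generating the saturated class of extensions, so in particular it preserves 0-extensions.

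For the main step, let $\phi:\Delta C\to D$ be a jointly full cone in $\clopalg$ with $C$ and all $D_j$ flat. By Lemma~\ref{lem:jfc} it suffices to show that the mediating arrow $m:J\reali(C)\to\lim(J\reali\circ D)$ is full, and by \ref{def:cac:cg} it is enough to check its \rlp against a generating extension $e:\Gamma\extension\Gamma'$ between compact 0-extensions. Unwinding this lifting problem against the cone, I am handed $h:\Gamma\to J\reali(C)$ and legs $\kappa_j:\Gamma'\to J\reali(D_j)$ with $J\reali(\phi)_j\circ h=\kappa_j\circ e$, and I must fill. Here I would transport $h$ and $\kappa$ across the bijection of the previous paragraph to a map $\tilde h:H(\Gamma)\to C$ and a cone $\tilde\kappa:\Delta H(\Gamma')\to D$ in $\clopalg$; naturality of the bijection, together with full faithfulness on flat models, turns the compatibility equation into $\phi_j\circ\tilde h=\tilde\kappa_j\circ H(e)$. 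Since $H(e):H(\Gamma)\to H(\Gamma')$ is a generating extension of $\clopalg$ with $J\reali(H(e))=e$, joint fullness of $\phi$ furnishes a filler $\tilde l:H(\Gamma')\to C$, and $l:=J\reali(\tilde l)$ is then the desired filler in $\efl$, because applying $J\reali$ to $\tilde l\circ H(e)=\tilde h$ and $\phi_j\circ\tilde l=\tilde\kappa_j$ returns $l\circ e=h$ and $J\reali(\phi)_j\circ l=\kappa_j$.

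Finally, for a nice diagram $A_\bullet$ in $\clopalg$ I would check the five conditions one at a time. All of $A_0,A_1,A_2,\tilde A$ are 0-extensions, hence flat by Corollary~\ref{cor:fat-small}, so the main step applies to every cone in sight. Condition~\ref{def:nice:exts} holds since $J\reali$ preserves 0-extensions; conditions~\ref{def:nice:full}, \ref{def:nice:atwo}, and \ref{def:nice:atilde} are all instances of joint fullness of flat models (a full map is a jointly full cone over a one-object diagram, and the two remaining conditions are jointly full cones over a cospan and over the displayed square), hence preserved by the main step; and condition~\ref{def:nice:sym} is preserved by functoriality alone, $J\reali(d_1)\circ J\reali(\sigma)=J\reali(d_0)$ and symmetrically. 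I expect the only real obstacle to be the bookkeeping of the bijection $\efl(\Gamma,J\reali(A))\cong A(\Gamma)$ and its naturality—making sure the cone and the compatibility equations transport correctly in both directions—which is exactly where full faithfulness on flat models, and the flatness of all objects involved, are used essentially.
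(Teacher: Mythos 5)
Your proof is correct and is essentially the paper's own argument spelled out in detail: the paper likewise deduces the first claim from full faithfulness of $J\reali$ on flat models (Lemma~\ref{lem:flat-junit-iso}) together with the fact that the weak factorization systems on both sides are determined by the same generators, giving a one-to-one correspondence of lifting problems, and the second claim from $J\reali$ preserving \zexts and \zexts being flat by Corollary~\ref{cor:fat-small}. Your explicit transport of lifting problems across the bijection $\efl(\Gamma,J\reali(A))\cong A(\Gamma)$ and the condition-by-condition check for nice diagrams are exactly the bookkeeping the paper leaves implicit.
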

\begin{proof}
The first claim follows since $J\reali$ is fully faithful on \zexts by
Lemma~\ref{lem:flat-junit-iso} and in both sides the weak factorization system
determined by the same generators. Thus there's a one-to-one correspondence
between lifting problems. The second claim follows since $J\reali$ preserves
\z-extensions and \z-extensions are flat by the fat small object argument.
\end{proof}

\begin{lemma}\label{lem:nerve-pres-quot-nice} For any clan-algebraic category
$\efl$, the functor $J\nerve:\efl\to\clopalg$ preserves quotients of nice
diagrams.
\end{lemma}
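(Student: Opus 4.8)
The plan is to reduce the claim to a single coequalizer computation and then use that $J\nerve$ preserves limits and full maps. Let $A_\bullet$ be a nice diagram with colimit $A$. By Lemma~\ref{lem:factor-nice} the pairing $\angs{d_0,d_1}$ factors as $A_1\xrightarrow{f}R\xrightarrow{r}A_0\times A_0$ with $f$ full and $r$ a monic componentwise-full equivalence relation, and by \ref{def:cac:fq} the relation $R$ is effective with a full coequalizer $e\colon A_0\fullmap A$; thus $A=A_0/R$, the map $e$ is full, and $R=\ker(e)$. Since $f$ is full, hence epic, the coequalizer of $d_0,d_1$ coincides with that of the equivalence relation $R$, which is $e$; and as the colimit of a $2$-truncated semisimplicial diagram is the coequalizer of its two bottom face maps, the colimit of $A_\bullet$ is computed by $e$. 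It therefore suffices to prove that $J\nerve(e)$ is the coequalizer of $J\nerve(d_0),J\nerve(d_1)$ in $\clopalg$.

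The key step is that $J\nerve$ preserves full maps, which I would deduce from the biadjunction $J\reali\adj J\nerve$. By Remark~\ref{rem:wfsef}\ref{rem:wfsef:display-extension} applied to the clan $\czer{\efl}\op$, the \wfs on $\clopalg$ is cofibrantly generated by the extensions $H(p)$ for display maps $p$ of $\czer{\efl}\op$; and since $J\reali\circ H=J$ we have $J\reali(H(p))=p$, which is an extension in $\efl$. Hence for a full map $g$ in $\efl$ and any generator $H(p)$, transposing across the adjunction identifies fillers for $H(p)$ against $J\nerve(g)$ with fillers for $p$ against $g$; the latter exist because $g$ is full, so $J\nerve(g)$ has the right lifting property against every generator and is itself full.

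To conclude, I apply $J\nerve$ to the factorization $\angs{d_0,d_1}=r\circ f$. As a right adjoint $J\nerve$ preserves limits, so $J\nerve(r)$ is monic and the kernel pair of $J\nerve(e)$ is $J\nerve(R)$; and $J\nerve(f)$ is full, hence a regular epimorphism by Lemma~\ref{lem:calg-full-repi} (recall $\clopalg$ is clan-algebraic by Theorem~\ref{thm:tmod-clanalg}). Thus $\angs{J\nerve(d_0),J\nerve(d_1)}=J\nerve(r)\circ J\nerve(f)$ is an image factorization, identifying the image of $\angs{J\nerve(d_0),J\nerve(d_1)}$ with the kernel pair $J\nerve(R)$ of $J\nerve(e)$. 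Since $J\nerve(e)$ is full, it is a regular epimorphism and hence the coequalizer of its kernel pair $J\nerve(R)$; and because $J\nerve(f)$ is epic, coequalizing $J\nerve(d_0),J\nerve(d_1)$ is the same as coequalizing $J\nerve(R)$. Therefore $J\nerve(e)$ is the coequalizer of $J\nerve(d_0),J\nerve(d_1)$, \ie $J\nerve$ preserves the colimit of $A_\bullet$.

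I expect the preservation of full maps to be the only genuinely delicate point; once it is in place the rest is formal, resting only on $J\nerve$ preserving limits and on full maps being regular epimorphisms in the clan-algebraic category $\clopalg$. One caveat worth flagging is that $J\nerve A_\bullet$ need not itself be a nice diagram (the objects $A_i$ are \z-extensions but possibly non-compact, and $J\nerve$ need not preserve initial objects, so $J\nerve(A_i)$ need not be a \z-extension); this is why I phrase the target as the plain coequalizer of the bottom face maps rather than invoking Lemma~\ref{lem:factor-nice} a second time in $\clopalg$.
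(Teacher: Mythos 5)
Your proof is correct and takes essentially the same route as the paper's: factor $\angs{d_0,d_1}$ through the componentwise-full equivalence relation via Lemma~\ref{lem:factor-nice}, pass to the coequalizer of the relation using that the full map $f$ is epic, and transport everything across $J\nerve$ using that it preserves kernel pairs and full maps, with fullness of $J\nerve(e)$ giving a regular epi in $\clopalg$. The only difference is that you spell out the transposition argument (via $J\reali\adj J\nerve$ and $J\reali(H(p))=p$) for why $J\nerve$ preserves full maps, a step the paper leaves implicit; that is added detail, not a divergent approach.
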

\begin{proof}
Given a nice diagram $A_\bullet$ in $\efl$, its colimit is the coequalizer of
$d_0,d_1:A_1\to A_0$. By Lemma~\ref{lem:factor-nice}, $\angs{d_0,d_1}$ factors
as $\angs{r_0,r_1}\circ f$ with $f$ full and $r=\angs{r_0,r_1}$ an equivalence
relation. The pairs $d_0,d_1$ and $r_0,r_1$ have the same coequalizer (since $f$
is epic), and $J\nerve$ preserves the coequalizer of $r_0,r_1$ since it preserves
full maps and kernel pairs. Finally, the coequalizer of $J\nerve(r_0),J\nerve(r_1)$ is
also the coequalizer of $J\nerve(d_0),J\nerve(d_1)$ since $J\nerve(f)$ is full and therefore
epic.
\end{proof}

\begin{theorem}\label{thm:clanalg-equiv}
If $\efl$ is clan-algebraic, then $J\nerve:\efl\to\czer{\efl}\op\hmod$ is an
equivalence. 
\end{theorem}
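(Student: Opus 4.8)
The plan is to prove that the nerve--realization adjunction $J\reali\dashv J\nerve$ between $\clopalg$ and $\efl$ is an \emph{adjoint equivalence}, by showing that both its unit $\eta:\id_{\clopalg}\Rightarrow J\nerve J\reali$ and its counit $\varepsilon:J\reali J\nerve\Rightarrow\id_\efl$ are invertible. The counit is immediate from the hypotheses: condition~\ref{def:cac:d} asserts that $J:\czer{\efl}\incl\efl$ is dense, and density of $J$ is by definition the statement that $J\nerve$ is fully faithful, equivalently that $\varepsilon$ is an isomorphism. Everything therefore reduces to showing that the unit $\eta_A:A\to J\nerve J\reali(A)$ is an isomorphism for every $A\in\clopalg$; once this is known, $J\nerve$ is an equivalence.

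For \emph{flat} $A$ this is exactly Lemma~\ref{lem:flat-junit-iso}. To treat an arbitrary model I would resolve it by a nice diagram. Since $\clopalg$ is itself clan-algebraic by Theorem~\ref{thm:tmod-clanalg}, hence an \efcat, the nice-diagram existence lemma above applies inside $\clopalg$ and produces a nice diagram $A_\bullet$ whose colimit is $A$; concretely, $A$ is the coequalizer of $d_0,d_1:A_1\to A_0$. The decisive point is that, by condition~\ref{def:nice:exts}, the objects $A_0$ and $A_1$ are \zexts, and \zexts are flat by the fat small object argument (Corollary~\ref{cor:fat-small}). Consequently $\eta_{A_0}$ and $\eta_{A_1}$ are already isomorphisms by the flat case.

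It then remains to transport this isomorphism across the coequalizer, and here the two preservation lemmas do the work. Because $J\reali$ is cocontinuous and sends nice diagrams to nice diagrams (Lemma~\ref{lem:real-pres-nice}), the image $J\reali(A_\bullet)$ is a nice diagram in $\efl$ with $J\reali(A)$ its coequalizer; and because $J\nerve$ preserves quotients of nice diagrams (Lemma~\ref{lem:nerve-pres-quot-nice}), the object $J\nerve J\reali(A)$ is the coequalizer of $J\nerve J\reali(d_0)$ and $J\nerve J\reali(d_1)$. Thus $A$ and $J\nerve J\reali(A)$ are presented as coequalizers of the parallel pairs $(d_0,d_1)$ and $(J\nerve J\reali d_0,J\nerve J\reali d_1)$, and by naturality $\eta$ restricts to a morphism between these parallel pairs which is invertible at both $A_0$ and $A_1$. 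Since the coequalizer of an isomorphism of parallel pairs is an isomorphism, $\eta_A$ is invertible, completing the argument; with $\eta$ and $\varepsilon$ both isomorphisms, $J\nerve$ is an equivalence.

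The main obstacle is precisely this passage from flat models to all models: $J\nerve$ is a right adjoint and has no reason to commute with the coequalizer presenting $A$. What rescues the argument is the engineered shape of a nice diagram, whose coequalizer is---via Lemma~\ref{lem:factor-nice}---a quotient by an \emph{effective, componentwise-full} equivalence relation; this is exactly the kind of colimit that $J\nerve$ does preserve, through its preservation of full maps and kernel pairs. In this sense the substance of the theorem is bundled into Lemmas~\ref{lem:real-pres-nice} and~\ref{lem:nerve-pres-quot-nice} together with the flat case, and the proof above is merely their assembly.
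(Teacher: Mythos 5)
Your proposal is correct and is essentially the paper's own proof: the paper likewise gets full faithfulness of $J\nerve$ from \ref{def:cac:d}, resolves an arbitrary $A\in\clopalg$ by a nice diagram, and concludes $\eta_A$ is invertible via Lemmas~\ref{lem:real-pres-nice}, \ref{lem:nerve-pres-quot-nice}, and \ref{lem:flat-junit-iso} applied to the flat (\z-extension) stages of the resolution. Your explicit naturality-of-$\eta$ step between the two parallel pairs is just an unpacking of the chain of identifications the paper writes as a single computation.
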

\begin{proof}
By density, $J\nerve$ is fully faithful. It remains to verify that it is
essentially surjective, and to this end we show that the unit map $\eta_A:A\to
J\nerve(J\reali(A))$ is an isomorphism for all $A\in\czer{\efl}\op\hmod$. Let
$A_\bullet$ be a nice diagram with colimit $A$. We have:
\begin{align*}
J\nerve(J\reali(A)) &= J\nerve(J\reali(\colim(A_\bullet)))\\
&=\colim(J\nerve(J\reali(A_\bullet)))&&\text{by Lemmas \ref{lem:real-pres-nice}
                                and \ref{lem:nerve-pres-quot-nice}}\\
&=\colim(A_\bullet) &&\text{by Lemma \ref{lem:flat-junit-iso}}\\
&=A
\end{align*}
\end{proof}
We are now ready to prove our main theorem.
\begin{theorem}\label{thm:main}
The contravariant biadjunction between clans and \efcats from
Proposition~\ref{prop:biadj} restricts to a contravariant biequivalence between
Cauchy-complete clans and clan-algebraic categories.
\begin{equation*}
\tikzsetnextfilename{k3eqwixu}
\begin{mytikzcd}
    \Clan\cc\op
        \ar[r,hook]
        \ar[d,leftrightarrow,"\simeq"']
\pnc\Clan\op
        \ar[d,"(-){\hmod}", shift left = 6pt]
        \ar[d,phantom,"\adj"]
        \ar[from=d,"\czer{-}\op", shift left = 6pt]
\\  \ClanAlg
        \ar[r,hook]
\pnc\EFCat
\end{mytikzcd}
\end{equation*}
\end{theorem}
\begin{proof}
By Theorem~\ref{thm:cc-equiv} the biadjunction is idempotent, and its
fixed points in $\Clan$ are Cauchy-complete clans. By
Theorems~\ref{thm:tmod-clanalg} and \ref{thm:clanalg-equiv}, the fixed points in
$\EFCat$ are precisely the clan-algebraic categories.
\end{proof}
\begin{remarks}\label{rems:char-algcat}\begin{enumerate}
\item\label{rems:char-algcat-arv} The characterization of \efcats of the form
$\tmod$ in terms of conditions \ref{def:cac:d}, \ref{def:cac:cg}, and
\ref{def:cac:fq} generalizes, and is in fact inspired by,
Adámek--Rosický--Vitale's characterization of \emph{algebraic categories} (i.e.\
categories of models of algebraic theories) as \emph{Barr-exact cocomplete
locally small  categories with a strong generator of compact regular
projectives} \cite[Corollary~18.4]{adamek2010algebraic}\footnote{See
also~\cite[Theorem~5.5]{adamek2004quasivarieties} for an earlier version of this
using slightly different notions}. In particular, if $\clant$ is a
finite-product clan then the regular projectives in $\tmod$ are precisely the
$0$-extensions, thus the strong generation requirement corresponds to
\ref{def:cac:d} (which we have elected to state in terms of density). The
Barr-exactness requirement refines to the existence of \emph{full coequalizers
of componentwise full equivalence relations} in the clanic case. The fullness
requirements are void in the finite-product case, since coequalizers as well as
split epimorphisms are always regular epimorphisms.
\item
As emphasized by the referee, Definition~\ref{def:nice} of `nice diagrams'
resembles Carboni--Vitale's definition of \emph{pseudo-equivalence
relations}~\cite[Def.~6]{carboni1998regular}. Indeed, the resolution by nice
diagrams given in Lemma~\ref{lem:resolution} reduces to pseudo-equivalence
relations in projective objects in the case of finite-product clans, which are
related to the fact that algebraic categories are \emph{ex/wlex-completions} in
the sense of Carboni and Vitale. An analogous presentation of clan-algebraic
categories as a completion of categories of $0$-extensions (suitably
axiomatized) is subject of future work.
\end{enumerate}

\end{remarks}

\subsection{Clan-algebraic weak factorization systems on
\texorpdfstring{$\Cat$}{Cat}}\label{suse:caws-cat}

The characterization of \efcats of models of clans as clan-algebraic categories
allows to exhibit new clans by defining suitable w.f.s.s on l.f.p.\ categories.
In this subsection we demonstrate this by defining three more clan-algebraic
w.f.s.s on $\Cat$.

We start with the clan-algebraic w.f.s.\ from
Example~\ref{ex:wfsef}\ref{ex:wfsef:cat}, corresponding to the `standard'
clan-presentation $\clant_\Cat$ from \Subsection\ref{suse:cat-clan}. We observed
that this w.f.s.\ is cofibrantly generated by the functors $0\incl 1$ and
$2\incl\bbtwo$. Our strategy to define new clan-algebraic w.f.s.s is to add
additional generators. If we make sure that the domain and codomain of these are
compact $0$-extensions, we only have to verify condition~\ref{def:cac:fq} when
verifying that the new w.f.s.\ is still clan-algebraic. The additional
generators we consider are the arrows $\PP\to\bbtwo$ and $2\to 1$, where $\PP$
is the `parallel pair category' $\bullet\rightrightarrows\bullet$. By adding
either one or both of the additional generators we obtain three additional
w.f.s.s $(\classe\sO,\classf\sO)$, $(\classe\sA,\classf\sA)$, and
$(\classe\sOA,\classf\sOA)$, where:
\begin{align*}
  \classf       &=\braces{(0\to 1),(2\to\bbtwo)}^\pitchfork
\\\classf\sO &=\braces{(0\to 1),(2\to\bbtwo),(2\to 1)}^\pitchfork
\\\classf\sA &=\braces{(0\to 1),(2\to\bbtwo),\phantom{(2\to 1),}(\PP\to\bbtwo)}^\pitchfork
\\\classf\sOA   &=\braces{(0\to 1),(2\to\bbtwo),{(2\to 1),}(\PP\to\bbtwo)}^\pitchfork
\end{align*}
We have already observed that $\classf$ consists of the functors that are full
and surjective on objects, and it is easy to see that $\classf\sO$ contains only
those functors which are full and \emph{bijective} on objects, whereas
$\classf\sA$ consists of functors which are \emph{fully faithful} and surjective
on objects. Finally, $\classf\sOA$ only contains functors which are fully
faithful and bijective on objects, i.e.\  isomorphisms of categories.

To convince ourselves that the new w.f.s.s are indeed clan-algebraic we only
have to verify that for every equivalence relation $\angs{p,q}:\RR\to\AA\x\AA$
in $\Cat$, the coequalizer is in either of $\classf\sO,\classf\sA,\classf\sOA$
whenever $p$ and $q$ are, since effectivity has already been established for
equivalence relations with components in $\classf$. This is not difficult to see
for $\classf\sO$ and $\classf\sA$, and trivial for $\classf\sOA$.

The coclans corresponding to the new w.f.s.s are:
\begin{itemize}
\item $\clant_{\Cat_{\sO}}\op= \bracetext{categories free on finite graphs}$,
with functors $G^*\to H^*$ arising from faithful graph morphisms as \codisplay
maps,
\item $\clant_{\Cat_{\sA}}\op = \bracetext{finitely presented categories}$, with
injective-on-objects functors as \codisplay maps, and 
\item $\clant_{\Cat_{\sOA}}\op = \bracetext{finitely presented categories}$, with
arbitrary functors as \codisplay maps.
\end{itemize}
We note the clan $\clant_{\Cat_{\sOA}}$ is simply the finite-limit theory of
categories. One may ask whether the clans $\clant_{\Cat_{\sO}}$,
$\clant_{\Cat_{\sA}}$, and $\clant_{\Cat_{\sOA}}$ admit simple syntactic
presentations by GATs, and indeed they do. To obtain such a presentation e.g.\
for $\clant_{\Cat_{\sO}}$, we have to modify the GAT $\TT_\Cat$ in such a way
that the syntactic category stays the same, but acquires additional display
maps, such as the diagonal $(x:O)\to(x\,y:O)$ corresponding to the new generator
$2\to 1$. Display maps in the syntactic category of a GAT are always of the form
$p\circ i$ where $p$ is a projection omitting a finite number of variables and
$i$ is an isomorphism (Proposition~\ref{prop:clan-from-cxcat}), so to turn
$(x:O)\to(x\,y:O)$ into a display map we have to make $(x:O)$ isomorphic to an
\emph{extension} of $(x\,y:O)$. To achieve this we postulate a new type family
$x\,y:O\ent E_O(x,y)$ and add axioms forcing the projection $(x\,y:A\sco
z:E_O(x,y))\to (x:A)$ to become an isomorphism:
\begin{equation}\label{eq:o-id}
\begin{aligned}
    x\,y:O                & \sent E_O(x,y)
\\  x:O                   & \sent r_O(x):E_O(x,x) 
\\  x\,y:O\,,\,p:E_O(x,y) & \sent x=y
\\  x:O\,,\,p:E_O(x,x)    & \sent r_O(x)=p
\end{aligned}
\end{equation}
The function symbol $r_O$ gives a section for the projection, and the two last
axioms force the retract to be an isomorphism. We recognize at once that these
axioms make $E_O$ an \emph{extensional identity type} of
$O$~\cite[Section~3.2]{hofmann1997extensional}: the term $r_O$ is reflexivity,
and the third and fourth rule give equality reflection and uniqueness of
identity proofs. We write $\TT_{\Cat_{\sO}}$ for the extension of the GAT
$\TT_{\Cat}$ by the axioms~\eqref{eq:o-id}, and $\Cat_{\sO}$ for the
corresponding clan-algebraic category.

Similarly, we obtain a GAT-representation $\TT_{\Cat_{\sA}}$ of the clan
$\clant_{\Cat_{\sA}}$ by augmenting $\TT_\Cat$ by a type family $E_A$ with the
following rules:
\begin{equation}\label{eq:a-id}
\begin{aligned}
   x\,y:O\,,\,f\,g:A(x,y)                &\sent E_A(f,g)
\\ x\,y:O\,,\,f:A(x,y)                   &\sent r_A(f):E_A(f,f) 
\\ x\,y:O\,,\,f\,g:A(x,y)\,,\,p:E_A(f,g) &\sent f=g
\\ x\,y:O\,,\,f:A(x,y)\,,\,p:E_A(f,f)    &\sent r_A(f)=p
\end{aligned}
\end{equation}
Adding both sets of axioms \eqref{eq:o-id} and \eqref{eq:a-id} to $\TT_\Cat$
yields a GAT for the clan $\clant_{\Cat_{\sOA}}$, i.e.\ the finite-limit theory
of categories.
\begin{remark}
As pointed out by the referee, an enriched version of the
$(\classe\sA,\classf\sA)$-instance of the exactness condition \ref{def:cac:fq}
appears under the name \emph{ff-exactness} in Bourke and
Garner's~\cite{bourke2014two}. Further investigation may be needed.
\end{remark}

\section{A counterexample}\label{se:counterexample}

This section gives a common counterexample to two related natural questions
about the extension--full w.f.s.\ on a clan-algebraic category $\efl$:
\begin{enumerate}
\item Does every compact object admit a full map from a compact $0$-extension?
\item Does the weak factorization system always restrict to compact
$0$-extensions? 
\end{enumerate}
The counterexample to both question is given by the category of models of the
following GAT with infinitely many sorts and operations:
\begin{alignat*}{3}
&\sent X\\
&\sent Y\\
y{:}Y&\sent Z_n(y)&n\in\N\\
x{:}X&\sent f(x):Y\\
x{:}X&\sent g_n(x):Z_n(f(x))\qquad&n\in\N
\end{alignat*}
Its category of models is equivalent to the set-valued functors on the posetal
category
\begin{equation*}
\CC = \left(
\tikzsetnextfilename{UnySuCzy}
\begin{mytikzcd}[column sep = 5pt]
    X       \ar[dd, bend right = 40, "f"']
            \ar[d, "g_0" near end]
            \ar[dr, "g_1" very near end]
            \ar[drrr, "g_n" near end]
\\  Z_0     \ar[d, -{Triangle[open]},"z_0"]
\pnc   Z_1     \ar[dl, -{Triangle[open]},"z_1"]
\pnc   \dots
\pnc   Z_n     \ar[dlll, -{Triangle[open]},"z_n"]
\pnc   \dots
\\  Y
\end{mytikzcd}
\right)
\end{equation*}
and the w.f.s.\ on $[\CC,\Set]$ is generated by the arrows $(\varnothing\codito
\yo(X))$, $(\varnothing\codito\yo(Y))$, and $(\yo(Y)\codito\yo(Z_n))$ for
$n\in\N$, reflecting the idea that models $A:\CC\to\Set$ can be built up by
successively adding elements to $A(X)$ or $A(Y)$, and to $A(Z_n)$ over a given
element $x$ of $A(Y)$, as in the following pushouts.
\begin{equation*}
\tikzsetnextfilename{HJSZMZGG}
\begin{mytikzcd}[sep = small]
    {\varnothing}
        \ar[r]
        \ar[d,{Triangle[open,reversed]->}]
\pnc	{A}
        \ar[d,{Triangle[open,reversed]->}]
\\	{1=\yo(X)}
        \ar[r]
\pnc	|[label={[overlay, label distance=-3mm]135:\ulcorner}]|{1+A}
\end{mytikzcd}
\qquad
\tikzsetnextfilename{RcKjobXg}
\begin{mytikzcd}[sep = small]
    \varnothing
        \ar[r]
        \ar[d,{Triangle[open,reversed]->}]
\pnc	A
        \ar[d,{Triangle[open,reversed]->}]
\\	{\yo(Y)}
        \ar[r,""]
\pnc	|[label={[overlay, label distance=-3mm]135:\ulcorner}]|{\yo(Y)+A}
\end{mytikzcd}
\qquad
\tikzsetnextfilename{gGKkVdXC}
\begin{mytikzcd}[sep = small]
    {\yo(Y)}
        \ar[r,"\hat x"]
        \ar[d,{Triangle[open,reversed]->}]
\pnc	{A}
        \ar[d,{Triangle[open,reversed]->}]
\\	{\yo(Z_n)}
        \ar[r]
\pnc	|[label={[overlay, label distance=-3mm]135:\ulcorner}]|{B}
\end{mytikzcd}
\end{equation*}
The following lemma gives explicit descriptions of the w.f.s.\ and the compact
objects in $[\CC,\Set]$.
\begin{lemma}\label{lem:xyz-gat} Let $f:A\to B$ in $[\CC,\Set]$.
\begin{enumerate}[label=(\roman*)]
\item $f$ is full if and only if it is componentwise surjective and the
naturality squares for $z_n$
are weak pullbacks for all $n\in\N$.
\item $f$ is an extension if an only if $f_X:A(X)\to B(X)$ is injective, and the
squares
\begin{equation*}
\tikzsetnextfilename{QHYOqPve}
\begin{mytikzcd}[sep = small]
    A(X)
        \ar[r,""']
        \ar[d,""]
\pnc	B(X)
        \ar[d,""]
\\	A(Y)
        \ar[r,""]
\pnc	B(Y)
\end{mytikzcd}\qquad\qquad
\tikzsetnextfilename{XgILxNQm}
\begin{mytikzcd}[sep = small]
    A(X)
        \ar[r,""']
        \ar[d,""]
\pnc	B(X)
        \ar[d,""]
\\	A(Z_n)
        \ar[r,""]
\pnc	B(Z_n)
\end{mytikzcd}
\end{equation*}
are \emph{quasi-pushouts}, in the sense that the gap maps $A(Y)+_{A(X)}B(X)\to
B(Y)$ and $A(Z_n)+_{A(X)}B(X)\to B(Z_n)$ are injective. (This implies that the
components $f_{Y}$ and $f_{Z_n}$ are also injective).
\item $A$ is a $0$-extension if an only if $A(f)$ and all $A(g_n)$ are
injective.
\item\label{lem:xyz-gat-comp} $A$ is compact if an only if (a) it is
componentwise finite, and (b) $A({f_n}):A(X)\to A(Z_n)$ is a bijection for all
but finitely many $n\in\N$.
\end{enumerate}
\qed
\end{lemma}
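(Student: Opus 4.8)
The four statements are essentially independent, so the plan is to treat them one at a time, working throughout inside the presheaf category $[\CC,\Set]$ and exploiting that all limits and colimits there are computed pointwise. For (i) I would just unwind the right lifting property against each of the three generators. Lifting against $\varnothing\codito\yo(X)$ and $\varnothing\codito\yo(Y)$ says by Yoneda that $f_X$ and $f_Y$ are surjective, while lifting against $\yo(Y)\codito\yo(Z_n)$ says precisely that every pair $(y,b)\in A(Y)\x B(Z_n)$ with $f_Y(y)=B(z_n)(b)$ is hit by some $a\in A(Z_n)$, i.e.\ that the $z_n$-naturality square is a weak pullback. Since surjectivity of $f_Y$ together with the weak-pullback condition forces $f_{Z_n}$ surjective, these are equivalent to componentwise surjectivity together with the weak-pullback condition at the $z_n$, which is the claim (and also an instance of Remark~\ref{rem:wfsef}\ref{rem:wfsef:wpb-regepi}).

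For (ii) I would prove the two directions separately. For sufficiency I would build an explicit relative cell complex exhibiting $f$ as an extension: first attach one copy of $\yo(X)$ for each element of $B(X)\setminus f_X(A(X))$, which by the two quasi-pushout hypotheses yields a subobject of $B$ that is an isomorphism on the $X$-component and still injective on the others; then attach copies of $\yo(Y)$ to fill in $B(Y)$, and finally copies of $\yo(Y)\codito\yo(Z_n)$ over the appropriate $Y$-elements to fill in each $B(Z_n)$. Tracking components shows this cell complex is exactly $f$, so $f$ lies in the saturation of the generators. For necessity I would verify that the class of maps satisfying the stated conditions contains the three generators and is closed under pushout, transfinite composition, and retract; these are routine pointwise checks in $\Set$, where pushouts along monomorphisms preserve monomorphisms and the injectivity of the relevant gap maps. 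Statement (iii) then drops out of (ii) by taking the domain to be the initial object $0$, since the two gap-map conditions collapse to injectivity of $A(f)$ and of every $A(g_n)$.

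The substance of the lemma is (iv). Sufficiency is clean: if $A$ is componentwise finite and $A(g_n)\colon A(X)\to A(Z_n)$ is bijective for all $n$ outside a finite set $N$, then for $n\notin N$ bijectivity forces the $Z_n$-component of any natural transformation $A\to G$ to be determined by its $X$-component, so a map $A\to G$ is the same as finitely many elements of $G(X),G(Y),G(Z_n)\ (n\in N)$ subject to finitely many finitary naturality constraints. Hence $[\CC,\Set](A,-)$ is a finite limit of evaluation functors $G\mapsto G(c)$, and since finite limits commute with filtered colimits in $\Set$ it preserves filtered colimits, i.e.\ $A$ is compact. For necessity, componentwise finiteness (a) is the standard fact that a compact object is finitely generated, together with the observation that here a functor generated by finitely many elements is componentwise finite, as each generator has at most one image in every component.

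For part (b) of (iv) I would realize $A$ as a sequential (hence filtered) colimit in two different ways and use that $\id_A$ must factor through a stage. Writing $A^{(k)}$ for the subfunctor agreeing with $A$ except that $A^{(k)}(Z_n)=\image(A(g_n))$ for $n>k$ exhibits $A=\colim_k A^{(k)}$ and forces $A(g_n)$ to be surjective for all large $n$. The harder half is injectivity: using that $A(X)$ is finite, pigeonhole produces a single pair $a\neq a'$ whose images under $A(g_n)$ coincide for infinitely many $n$; I would split each such common image $A(g_n)(a)=A(g_n)(a')$ into two elements and then reglue these splittings one index at a time, realizing $A$ as a sequential colimit $\colim_k G_k$ in which a factorization of $\id_A$ through any $G_k$ would force two elements that are distinct in $G_k$ to coincide, a contradiction. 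I expect this injectivity argument to be the main obstacle, precisely because it is the point at which the infinitude of the family $(Z_n)_{n\in\N}$ obstructs compactness and where componentwise finiteness alone does not suffice.
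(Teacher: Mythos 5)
The paper states this lemma with an immediate \qed and offers no proof at all, so there is no argument of the author's to compare against; your proposal correctly supplies the omitted verification, and each part checks out: the generator-by-generator unwinding in (i), the cell-complex construction plus closure of the gap-map conditions under pushout, transfinite composition, and retract in (ii) (using that pushouts of injections in $\Set$ are injections), the specialization $A=\varnothing$ in (iii), and in (iv) both the presentation of $[\CC,\Set](A,-)$ as a finite limit of evaluations (where naturality at $z_n$ for the bijective indices is automatic from naturality at $f$ and $g_n$) and the two filtered-colimit factorization arguments, including the pigeonhole-plus-regluing construction for injectivity, which works because any section $A\to G_k$ of the colimit cocone must be the identity on the $X$-component and so would identify the two split copies $c_n^1,c_n^2$. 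The only blemish is on the paper's side: the statement's $A(f_n)$ in (iv)(b) should read $A(g_n)$, as you tacitly corrected.
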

Using this lemma, we can give negative answers to the two question at the
beginning of the section.
\begin{proposition}
\begin{inumerate}
\item The object $P$ in the pushout
\begin{equation*}
\tikzsetnextfilename{WZbwZkuj}
\begin{mytikzcd}[sep = small]
    {\yo(Y)+\yo(Y) }  \ar[r]
                    \ar[d] 
\pnc   {\yo(X)+\yo(X)}   \ar[d]
\\  {\yo(Y)}          \ar[r]
\pnc   |[label={[overlay, label distance=-3mm]135:\ulcorner}]| P
\end{mytikzcd}
\end{equation*}
is compact, but does not admit a full map from a compact $0$-extension.
\item The map $\yo(Y)\to\yo(X)$ does not admit an extension--full factorization
through a compact object.
\end{inumerate}
\end{proposition}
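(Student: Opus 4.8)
The plan is to make the defining pushout of $P$ completely explicit and then to distil from Lemma~\ref{lem:xyz-gat} a single counting obstruction that refutes both questions at once. Since colimits in $[\CC,\Set]$ are computed objectwise, I would first evaluate the square pointwise, using that $\yo(Y)$ is the singleton concentrated at $Y$ and $\yo(X)$ is the terminal-valued functor $(\ast,\ast,\ast,\dots)$, that the left leg is the fold map $\yo(Y)+\yo(Y)\to\yo(Y)$, and that the top leg is $\yo(f)+\yo(f)$. This presents $P$ as the pushout $\yo(X)\sqcup_{\yo(Y)}\yo(X)$, with $P(X)=P(Z_n)=\braces{1,2}$ and $P(Y)=\braces{\ast}$, with $P(f),P(z_n)\colon\braces{1,2}\to\braces{\ast}$ and each $P(g_n)\colon\braces{1,2}\to\braces{1,2}$ a bijection. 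Compactness of $P$ is then immediate from Lemma~\ref{lem:xyz-gat}\ref{lem:xyz-gat-comp}, as $P$ is componentwise finite and \emph{every} $P(g_n)$ is bijective.

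For part~(i) I would argue by contradiction. Suppose $\phi\colon\overline P\to P$ is a full map out of a compact \zext $\overline P$. The weak-pullback clause of the characterisation of full maps in Lemma~\ref{lem:xyz-gat}, applied at $z_n$ against $P(z_n)\colon\braces{1,2}\to\braces{\ast}$, forces the comparison map $\overline P(Z_n)\to\braces{1,2}\x\overline P(Y)$ to be surjective, so that $\abs{\overline P(Z_n)}\geq 2\,\abs{\overline P(Y)}$. On the other hand, compactness gives $\overline P(g_n)$ bijective for all but finitely many $n$, whence $\abs{\overline P(Z_n)}=\abs{\overline P(X)}$ for such $n$, while the \zext condition makes $\overline P(f)$ injective, so $\abs{\overline P(X)}\leq\abs{\overline P(Y)}$. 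Evaluating all three inequalities at any index $n$ in the cofinite set where $\overline P(g_n)$ is bijective yields $2\,\abs{\overline P(Y)}\leq\abs{\overline P(Y)}$, hence $\overline P(Y)=\varnothing$, contradicting surjectivity of $\phi_Y$ onto $P(Y)=\braces{\ast}$.

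For part~(ii), suppose $\yo(Y)\xrightarrow{\,e\,}M\xrightarrow{\,\varphi\,}\yo(X)$ is a factorization with $e$ an extension, $\varphi$ full, and $M$ compact. Fullness of $\varphi$ against the terminal-valued $\yo(X)$ forces $M(z_n)\colon M(Z_n)\to M(Y)$ to be surjective for every $n$. Compactness makes $M(g_n)$ bijective for cofinitely many $n$, and the relation $z_n\circ g_n=f$ then gives $\image(M(z_n))=\image(M(f))$ for those $n$; combined with surjectivity this forces $\image(M(f))=M(Y)$. But applying the extension characterisation of Lemma~\ref{lem:xyz-gat} to the $(X,Y)$-quasi-pushout of $e$ shows that $e_Y(\ast)$ lies outside $\image(M(f))$, a contradiction.

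The conceptual heart of both arguments—the step I would flag as the actual content, as opposed to the bookkeeping with Lemma~\ref{lem:xyz-gat}—is the clash built into the relation $z_n\circ g_n=f$ together with the bijectivity clause of Lemma~\ref{lem:xyz-gat}\ref{lem:xyz-gat-comp}: compactness pins $M(Z_n)$ to a copy of $M(X)$ over cofinitely many indices, so $\image(M(z_n))$ can be no larger than $\image(M(f))$, whereas both fullness conditions demand that $z_n$ be (weakly) surjective onto all of $M(Y)$. The only genuinely delicate point is to apply the finiteness and injectivity constraints at an index $n$ for which $M(g_n)$ is already bijective; since the exceptional set is finite this costs nothing, but it is precisely the place where the infinitude of the sort family $\fami{Z_n}{n\in\N}$ is indispensable.
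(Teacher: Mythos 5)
Your proof is correct and takes essentially the same route as the paper's: both parts turn on the weak-pullback condition at $z_n$, the injectivity constraints coming from the extension/\zext characterizations in Lemma~\ref{lem:xyz-gat}, the relation $z_n\circ g_n=f$, and the compactness clause of Lemma~\ref{lem:xyz-gat}\ref{lem:xyz-gat-comp}. The only cosmetic differences are that the paper obtains compactness of $P$ directly as a finite colimit of representables, and in part (i) argues fiberwise that \emph{no} $E(g_n)$ is surjective (so it never needs componentwise finiteness), whereas your cardinality count at a single cofinite index additionally invokes finiteness from compactness---both equally valid.
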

\begin{proof}
For the first claim, $P$ is compact as a finite colimit of representables. Let $f:E\fullmap P$ be a full map with $E$ a $0$-extension. For each $n\in\N$ we
get a diagram
\tikzsetnextfilename{dvOZrkaD}
\begin{equation*}\begin{mytikzcd}[column sep = small]
            E(X)
                \ar[ddr,tail,"E(f)"']
                \ar[dr,tail,"E(g_n)" very near end]
                \ar[rr,"f_X", two heads]
\pnc\pnc          2
                \ar[ddr]
                \ar[dr]
\\[-10pt]\pnc  E(Z_n)
                \ar[rr, crossing over,"f_{Z_n}" near start, two heads]
                \ar[d,"E(z_n)"]
\pnc\pnc[10pt]    2
                \ar[d,""]
\\\pnc         E(Y)
                \ar[rr,"f_Y",two heads]
\pnc\pnc          1
\end{mytikzcd}\end{equation*}
where $E(f)$ and $E(g_n)$ are injective because $E$ is a $0$-extension, and the
components of $f$ are surjective and the $z_n$-naturality square is a weak
pullback since because $f$ is full. In particular $E(Y)$ is inhabited and the
fibers of $E(z_n)$ have at least two elements. Since the fibers of $E(f)$ have
at most one element, this means that $E(g_n)$ can't be surjective for any $n$,
and it follows from Lemma~\ref{lem:xyz-gat}\ref{lem:xyz-gat-comp} that $E$ is
not compact.

For the second claim consider an extension $e:\yo(Y)\to A$ such that
$A\to\yo(X)=1$ is full. Then $A(Y)$ is inhabited, all $A(Z_n)\to A(Y)$ are
surjective, and $1+A(X)\to A(Y)$ is injective. From this we can again deduce
that none of the $A(g_n)$ are surjective and thus $A$ is not compact.
\end{proof}

\section{Models in higher types}\label{se:higher-types}

One practical use of having inequivalent clans with equivalent categories of
$\Set$-models is that they can have inequivalent $\infty$-categories of models
in the $\infty$-category $\Spaces$ of \emph{homotopy types} (a.k.a.\ `spaces').
We leave this issue for future work and content ourselves here with outlining
some main ideas.

Reasoning informally, in the following we identify sets with discrete
($0$-truncated) homotopy types, and $1$-categories with $\infty$-categories all
of whose $\hom$-types are $0$-truncated. Given a clan $\clant$ it then makes
sense to define an \emph{$\infty$-model} to be an $\infty$-functor
$\clant\to\Spaces$ into spaces which preserves the terminal object and pullbacks
of display maps in the suitable $\infty$-categorical sense, and to write
$\tmod_\infty\subs[\clant,\Spaces]$ for the $\infty$-category of models, as a
full $\infty$-subcategory of the functor-$\infty$-category $[\clant,\Spaces]$.

The first observation is that for $\clant$ a \emph{finite-limit clan}, the
$\infty$-category $\tmod_\infty$ of $\infty$-models is in fact equivalent to
(the $\infty$-category corresponding to) the \emph{$1$-category} $\tmod$ of
$1$-models!
This is because finite $\infty$-limit preserving $\infty$-functors preserve
truncation levels, and thus every finite $\infty$-limit preserving
$F:\mcl\to\Spaces$ must factor through the inclusion of $\Set\incl\Spaces$,
since all objects of $1$-categories are (representably) $0$-truncated.

For finite-product clans, on the other hand, there is no such restriction. The
\oo-models of the finite-product clan $\mcc_\Mon$ of monoids, for example, are
the models of the \emph{associative
\oo-operad}~\cite[Section~4.11]{lurie2017ha}, whereas the \oo-models of the
finite-product theory of abelian groups are related to the Dold--Kan
correspondence. Variants of this phenomenon are discussed under the name
`animation' in~\cite{cesnavicius2024purity};
Rosický's~\cite{rosicky2007homotopy} contains an earlier account.

Now the nice thing about clans is that they admit finer graduations of `levels
of strictness'. Among the clans $\clant$, $\clant_{\Cat_{\sO}}$,
$\clant_{\Cat_{\sA}}$, and $\clant_{\Cat_{\sOA}}$ from
\Section\ref{suse:caws-cat}, for example, we know that the \oo-models of the
finite-limit clan $\clant_{\Cat_{\sOA}}$ are precisely the strict
$1$-categories. The presence of the extensional identity type on $O$ in
$\clant_{\Cat_{\sO}}$ behaves like a kind of `partial finite-limit completion',
and has the effect that the sort $O$ is interpreted by a $0$-type in every model
$\clant_{\Cat_{\sO}}\to\Spaces$, whereas the presence of extensional identities
on $A$ in $\clant_{\Cat_{\sA}}$ has the effect that the projection
$(x\,y:O\,,\,f:A(x,y))\to(x\,y:O)$ is mapped to a function with $0$-truncated
fibers by every \oo-model $C:\clant_{\Cat_{\sA}}\to\Spaces$. This means that
\oo-models of $\clant_{\Cat_{\sA}}$ are \emph{pre-categories} in the sense of
\emph{Homotopy Type Theory} \cite[Definition~9.1.1]{hottbook}, whereas
\oo-models of $\clant_{\Cat_{\sO}}$ seem to correspond to
\emph{Segal-categories}~\cite[Section~2]{hirschowitz1998descente},
\cite[Section~5]{bergner2010survey}. Finally, the clan $\clant_\Cat$ does not
impose \emph{any} truncation conditions, which makes its \oo-models resemble
\emph{Segal spaces} (not necessarily complete), in the sense
of~\cite[Section~4]{rezk2001model}.
\begin{remark}\label{rem:univalence} Notably absent from the list of higher
algebraic structures represented by the variants of $\clant_\Cat$ are univalent
$\infty$-categories. In fact, the requirement on a clan $\clant$ that
$\tmod_\infty\simeq\infty\h\Cat$ is incompatible with the requirement that
$\tmod\simeq\Cat$, since $\tmod$ is the full subcategory of $\tmod_\infty$ on
$0$-truncated objects, and the only $0$-truncated univalent $1$-categories are
the \emph{rigid} ones, i.e.\  those whose with only trivial automorphisms. It
seems unlikely to the author that $\infty\h\Cat$ can be described by a
`$1$-clan'. See~\cite{ahrens2020higher,ahrens2021univalence} for recent work
addressing this univalence issue, using techniques building on Makkai's
\emph{First order logic with dependent sorts (FOLDS)}~\cite{makkai1995first}.
Another recent work connecting Cartmell's GATs and Makkai's FOLDS is Chaitanya
Leena Subramaniam's thesis~\cite{leena2021dependent}.
\end{remark}

\appendix

\section{Locally finitely presentable categories, weak
factorization systems, and Quillen's small object argument}\label{se:lfp-soa}

This appendix recalls basic definitions and facts about the concepts mentioned
in the title.

\begin{definition}
A category $\mcc$ is called \emph{filtered}, if every diagram 
$D:\JJ\to\mcc$ with finite domain admits a cocone. A \emph{filtered colimit}
is a colimit of a diagram indexed by a filtered category.
\end{definition}
\begin{definition}\label{def:compact-lfp}
Let $\mfx$ be a cocomplete  locally small category.
\begin{enumerate}
\item\label{def:compact-lfp:compact}
An object $C\in\mfx$ is called \emph{compact}, if the covariant 
representable functor 
\begin{equation*}
\mfx(C,-):\mfx\to\Set
\end{equation*}
preserves small filtered colimits.
\item\label{def:compact-lfp:lfp}
$\mfx$ is called \emph{locally finitely presentable} (\lfp) if it admits a
\emph{small dense family of compact objects}, i.e.\ a family $\fami{C_i}{i\in
I}$ of compact objects indexed by a small set $I$, such that the nerve functor 
\vspace{-2ex}
\begin{equation*}
J\nerve:\mfx\to\psh\CC
\end{equation*}
of the inclusion $J:\CC\incl\mfx$ of the full subcategory on the
$\fami{C_i}{i\in I}$ is fully faithful.
\end{enumerate}
\end{definition}
\begin{remarks}
\begin{enumerate}
\item Compact objects are also known as \emph{finitely presentable objects},
e.g.\ in~\cite{gabriel1971lokal,adamek1994locally}. We adopted the term compact
from~\cite[Definition~A.1.1.1]{lurie2009higher} since it is more concise, and in
particular since \emph{compact \zext} sounds less awkward than \emph{finitely
presented \zext}. Moreover I think the fact that objects of algebraic categories
(such as groups, rings, modules \dots) are compact if and only if they admit a
presentation by finitely many generators and relations is an important theorem,
which is difficult to state if one uses the same terminology for the syntactic
and the categorical notion.
\item The density condition in the definition is equivalent to saying that the
family $\fami{C_i}\ini$ is a \emph{strong generator}, in the sense that the
canonical arrow
\begin{equation*}
\coprod_{i\in I,f:C_i\to A}\!\!\!C_i\to A
\end{equation*}
is an extremal epimorphism for all $A\in \mfx$. We stated the definition in
terms of density here, since nerve functors play a central role in this work,
contrary to strong generation.
\item The notion of \lfp category is a special case of the notion of
\emph{locally $\alpha$-presentable category} for a regular cardinal
$\alpha$~\cite{gabriel1971lokal,adamek1994locally}. In this work, only the case
$\alpha=\omega$ plays a role.
\end{enumerate}
\end{remarks}
\begin{definition}\label{def:wfs}
Let $\mcc$ be a category.
\begin{enumerate}
\item\label{def:wfs:llp-rlp}
Given two arrows $f:A\to B$, $g:X\to Y$ in $\mcc$, we say that $f$ has the
\emph{left lifting property} (\llp) \wrt $g$ (or equivalently that $g$ has the
\emph{right lifting property} (\rlp) \wrt $f$), and write $f\pitchfork g$, if in
each commutative square
\begin{equation*}
\tikzsetnextfilename{dqHrPkUH}
\begin{mytikzcd}
    A
        \ar[r,"h"]
        \ar[d,"f"']
\pnc   X
        \ar[d,"g"]
\\  B
        \ar[r,"k"]
        \ar[ru, dashed, "m"]
\pnc   Y
\end{mytikzcd}
\end{equation*}
there exists a diagonal arrow $h$ making the two triangles commute.
\item\label{def:wfs:orth-class}
Given a class $\classe\subs{\mor(\mcc)}$ of arrows in $\mcc$, we define:
\begin{align*}
{}^\pitchfork\classe&=\setof{f\in\mor(\mcc)}
        {\fa g\in\classe\qdot f\pitchfork g}\\
\classe^\pitchfork&=\setof{g\in\mor(\mcc)}
        {\fa f\in\classe\qdot f\pitchfork g}
\end{align*}
\item\label{def:wfs:wfs} A \emph{weak factorization system} (\wfs) on $\mcc$ is
a pair $\classl,\classr\subs\mor(\mcc)$ of classes of morphisms such that
$\classl^\pitchfork = \classr$, $\classr^\pitchfork = \classl$, and every
$f:A\to B$ in $\mcc$ admits a factorization $f=r\circ l$ with $l\in\classl$ and
$r\in\classr$.
\end{enumerate}
\end{definition}
We call $\classl$ the \emph{left class}, and $\classr$ the \emph{right class} of
the \wfs. One can show that left classes of w.f.s.s contain all isomorphisms, 
and are closed under composition and pushouts, i.e.\ if 
\begin{equation*}
\tikzsetnextfilename{ZrFUwxmz}
\begin{mytikzcd}
    A
        \ar[r]
        \ar[d,"l"']
\pnc   B
        \ar[d,"m"]
\\  C
        \ar[r]
\pnc   |[label={[overlay, label distance=-3mm]135:\ulcorner}]|D
\end{mytikzcd}
\end{equation*}
is a pushout in $\mcc$ and is a left map, then so is $m$. Dually, right maps are
closed under (isomorphisms, composition, and) pullbacks. With this, we have the
prerequisites to state Quillen's \emph{small object argument}.
\begin{theorem}[Small object argument for \lfp
categories]\label{thm:soa} Let $\classe\subs\mor(\mfx)$ be a small set of
morphisms in a l.f.p.\ category. Then
$({}^\pitchfork(\classe^\pitchfork),\classe^\pitchfork)$ is a w.f.s.\ on $\mfx$.
\end{theorem}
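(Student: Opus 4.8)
The plan is to run Quillen's transfinite small object argument, after first observing that the orthogonality identities demanded by Definition~\ref{def:wfs}\ref{def:wfs:wfs} are purely formal, so that all the content lies in constructing the factorizations. Set $\classr=\classe^\pitchfork$ and $\classl={}^\pitchfork(\classe^\pitchfork)={}^\pitchfork\classr$. The operations ${}^\pitchfork(-)$ and $(-)^\pitchfork$ form a Galois connection on classes of arrows, so the identity ${}^\pitchfork\classr=\classl$ holds by definition, while $\classl^\pitchfork=({}^\pitchfork(\classe^\pitchfork))^\pitchfork=\classe^\pitchfork=\classr$ follows from the standard idempotency $\mathcal K^\pitchfork=({}^\pitchfork(\mathcal K^\pitchfork))^\pitchfork$ of such a connection. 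Hence the only thing left to prove is that every $f:A\to B$ admits a factorization $A\xrightarrow{l}C\xrightarrow{r}B$ with $l\in\classl$ and $r\in\classr$.

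For the construction I would exploit that every object of a l.f.p.\ category is $\kappa$-presentable for some regular cardinal $\kappa$; since $\classe$ is a small set, I can fix a single regular cardinal $\lambda$ large enough that the domain of every map in $\classe$ is $\lambda$-presentable. Given $f:A\to B$, I would then build a $\lambda$-indexed chain $A=X_0\to X_1\to\cdots$ equipped with compatible maps $X_\alpha\to B$: at a successor stage $\alpha+1$, I form the pushout of $X_\alpha$ along the coproduct---indexed by all commutative squares from a generator $g\in\classe$ into the current map $X_\alpha\to B$---of the corresponding generators, and at limit stages I take colimits. Writing $C=X_\lambda=\colim_{\alpha<\lambda}X_\alpha$, the two resulting legs are $l:A\to C$ and the induced $r:C\to B$, so that $f$ factors as required.

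It then remains to identify the legs. The map $l$ is a transfinite composite of pushouts of coproducts of members of $\classe$; since $\classl={}^\pitchfork\classr$ is closed under each of these operations (the excerpt already records closure under composition and pushout, and closure under coproducts and transfinite composites follows from the same elementary lifting argument), we obtain $l\in\classl$. To see $r\in\classr=\classe^\pitchfork$, I would solve an arbitrary lifting problem of a generator $g:U\to V$ against $r$: because $U$ is $\lambda$-presentable and $C$ is a $\lambda$-filtered colimit (here the regularity of $\lambda$ is used), the top map $U\to C$ factors through some $X_\alpha$; the resulting square over that stage is one of those attached at step $\alpha$, so a lift $V\to X_{\alpha+1}\to C$ exists by construction, solving the original problem.

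The main obstacle is exactly this stopping argument. The delicate points are choosing $\lambda$ from the presentability ranks of the set-many generator domains, and then verifying that $X_\lambda\to B$ genuinely has the \rlp against all of $\classe$---which rests on the fact that in a l.f.p.\ category every object is presentable, so that $\mfx(U,-)$ preserves $\lambda$-filtered colimits for each generator domain $U$, and hence every filling square over $X_\lambda$ descends to a finite stage where it has already been filled off by the pushout defining $X_{\alpha+1}$. Everything else (the closure properties of $\classl$, and the two formal orthogonality identities) is routine.
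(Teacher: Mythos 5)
Your proposal is correct and is essentially the same argument as the paper's: the paper proves Theorem~\ref{thm:soa} by citing \cite[Thm.~2.1.14]{hovey1999model} and \cite[Thm.~12.2.2]{riehl2014categorical}, and what those references establish is precisely the transfinite Quillen construction you carry out (formal Galois-connection identities, a $\lambda$-indexed chain of pushouts of coproducts of generators for $\lambda$ a regular cardinal bounding the presentability ranks of the generator domains, and the stopping argument via $\lambda$-presentability against the $\lambda$-filtered colimit $X_\lambda$). The only blemish is the phrase ``descends to a finite stage'' near the end, which should read ``descends to some stage $\alpha<\lambda$''---as you in fact state correctly earlier---since the generator domains are only $\lambda$-presentable, not compact.
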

\begin{proof}
Hovey~\cite[Thm.~2.1.14]{hovey1999model} and
Riehl~\cite[Thm.~12.2.2]{riehl2014categorical} prove stronger statements in a
more general setting.
\end{proof}

\section{Generalized algebraic theories}\label{se:gats}

Cartmell's \emph{generalized algebraic theories} extend the notion of algebraic
theory (which can be `single sorted', such as the theories of \emph{groups} or
\emph{rings}, or `many sorted', such as the theories of \emph{reflexive graphs},
\emph{chain complexes of abelian groups}, or \emph{modules over a non-fixed base
ring}) by introducing \emph{dependent sorts} (a.k.a.\ dependent `types'), which
represent families of sets and can be used e.g.\ to axiomatize the notion of a
(small) category $\CC$ as a structure with a set $\CC_0$ of objects, and a
family $(\CC(A,B))_{A,B\in \CC_0}$ of $\hom$-sets (see \eqref{eq:gat-cat}
below).

Compared to ordinary algebraic theories, whose specification in terms of sorts,
operations, and equations is fairly straightforward, the syntactic description of generalized algebraic theories is complicated by
the fact that the domains of definition of operations and dependent sorts, and
the codomains of operations, may themselves be compound expressions involving
previously declared operations and sorts, whose well-formedness has to be
ensured and may even depend on the equations of the theory. This means 
that we have to state the declarations of \emph{sorts} and of \emph{operations},
and the \emph{equations} (which we collectively refer to as \emph{axioms} of the
theory) in an ordered way, where the later axioms have to be well-formed on the
basis of the earlier axioms. This looks as follows in the case of the
generalized algebraic theory $\TT_\Cat$ of categories:
\begin{equation}\label{eq:gat-cat}
\setlength{\jot}{1pt}
\begin{aligned}
        {}
&       \sent O
\\      x\, y : O 
&       \sent A\dep{x,y}
\\      x:O
&       \sent \id\dep{x}:A\dep{x,x}
\\      x\,y\,z:O \sco f:A\dep{x,y} \sco g:A\dep{y,z}
&       \sent g{\circ} f : A\dep{x,z}
\\      x\,y:O\sco f: A\dep{x,y}
&       \sent \id\dep{y}{\circ} f = f 
\\      x\,y:O\sco f: A\dep{x,y}
&       \sent  f{\circ} \id\dep{x} = f 
\\      w\,x\,y\,z:O\sco e:A\dep{w,x}\sco f:A\dep{x,y}\sco g:A\dep{y,z} 
&       \sent (g{\circ}f){\circ}e=g{\circ}(f{\circ}e)
\end{aligned}
\end{equation}
Each line contains one axiom, the first two declaring the sort $O$ of objects
and the dependent sort $A\dep{x,y}$ of arrows, the third and the fourth declaring
the identity and composition operations, and the last three stating the identity
and associativity axioms.

Each axiom is of the form $\Gamma\ent\mcj$, where the $\mcj$ on the right of the
`turnstile' symbol `$\ent$' is the actual declaration or equation, and the part
$\Gamma$ on the left---called `context'---specifies the sorts of the variables
occurring in $\mcj$. Note that the ordering of these `variable declarations' is
not arbitrary, since the sorts of variables may themselves contain variables
which have to be declared further left in the context. An example is the context
$(x\,y\,z:O \sco f:A\dep{x,y}\sco g:A\dep{y,z})$ of the composition operation,
where the sorts of the `arrow' variables $f,g$ depend on the `object' variables
$x,y,z$. See Figure \ref{fig:gat-gring} for another example generalized
algebraic theory: the generalized algebraic theory of \emph{rings graded over
monoids}.
\begin{figure}
\setlength{\jot}{0pt}
\begin{equation*}\begin{aligned}
\toprule
& 
&&\sent M \\
&& u : M&\sent R\dep{u} \\
&
&&\sent e:M \\
&& u\,v:M &\sent u\ap v: M \\
&& u:M &\sent 0\dep{u}: R\dep{u}\\
&& u:M\sco x\,y:R\dep{u}&\sent x+y : R\dep{u}\\
&& u:M\sco x:R\dep{u}&\sent - x : R\dep{u}\\
&& &\sent 1:R\dep{e}\\
&& u\,v:M\sco x:R\dep{u}\sco y:R\dep{v}&\sent x\ap y:R\dep{u\ap v}\\
&
&u:M&\sent e\ap u = u = u\ap e\\
&&u\,v\,w:M&\sent (u\ap v)\ap w = u\ap (v\ap w)\\
&&u:M\sco x\,y:R\dep{u}&\sent x+y = y+x\\
&&u:M\sco x\,y:R\dep{u}&\sent x+0\dep{u} = x\\
&&u:M\sco x\,y:R\dep{u}&\sent x+(-x) = 0\dep{u}\\
&&u:M\sco x:R\dep{u}&\sent 1\ap x = x = x\ap 1\\
&&u\,v\,w:M\sco x:R\dep{u}\sco y:R\dep{v}\sco z:R\dep{w}&\sent(x\ap 
                                                        y)\ap z=x\ap(y\ap z)\\
&&u\,v:M\sco x:R\dep{u}\sco y\,z: R\dep{v}&\sent x\ap(y+z)=x\ap y + x\ap z\\
&&u\,v:M\sco x\,y:R\dep{u}\sco z: R\dep{v}&\sent (x+y)\ap z = x\ap z + y \ap z
\\\bottomrule
\end{aligned}\end{equation*}

\vspace{-\baselineskip}
\caption{The generalized algebraic theory of monoid-graded
rings}\label{fig:gat-gring}
\end{figure}

The dependent structure of contexts and the  well-formedness requirement of
axioms on the basis of other axioms makes the formulation of a general notion of
generalized algebraic theory somewhat subtle and technical. We refer to
\cite{cartmell1978generalised,cartmell1986generalised} for the authoritative
account and to \cite[Section 6]{pitts2000categorical} and \cite[Section
2]{garner2015combinatorial} for rigorous and concise summaries. The good news is
that to understand specific examples of GATs, these technicalities may safely be
ignored: all we have to know is that for every generalized algebraic theory
$\TT$ there is a notion of `derivable judgment' which includes the axioms and is
closed under various \emph{rules} expressing that the set of derivable judgments
is closed under operations like substitutions and weakening, and that equality
is reflexive, symmetric, and transitive. 

Besides the forms of judgments
\begin{align*}
\Gamma&\ent S &&\text{`$S$ is a sort in context $\Gamma$'}\\
\Gamma&\ent t:S &&\text{`$t$ is term of sort $S$ in context $\Gamma$'}\\
\Gamma&\ent s=t:S &&\text{`$s$ and $t$ are equal terms in context $\Gamma$'}\\
\intertext{that we have already encountered, 
we consider the following additional forms of judgments:}
\Gamma&\ent S = T
    &&\text{`$S$ and $T$ are equal sorts in context $\Gamma$'}\\
\Gamma&\ent 
    &&\text{`$\Gamma$ is a context'}\\
\Gamma=\Delta&\ent 
    &&\text{`$\Gamma$ and $\Delta$ are equal contexts'}\\
\Gamma&\ent \sigma:\Delta 
    &&\text{`$\sigma$ is a {substitution} from $\Gamma$ to $\Delta$'}\\
\Gamma&\ent \sigma=\tau:\Delta 
    &&\text{`$\sigma$ and $\tau$ are equal {substitutions} from $\Gamma$ to 
            $\Delta$'}
\end{align*}
The last two of these introduce a novel kind of expression called
\emph{substitution}: a {substitution} $\Gamma\ent\sigma:\Delta$ is a list of
terms that is suitable to be simultaneously substituted for the variables in a
judgment in context $\Delta$ (in particular $\sigma$ and $\Delta$ must have the
same length), to produce a new judgment in context $\Gamma$, as expressed by the
following \emph{substitution} rule.
\begin{equation*}
\ax{\Gamma\sent\sigma:\Delta}
\ax{\Delta\sent\mcj}
\bi{\Gamma\sent\mcj[\sigma]}
\drap
\tag{Subst}\label{eq:rule-subst}
\end{equation*}
Here, $\mcj[\sigma]$ is the result of simultaneous substitution of the terms in
$\sigma$ for the variables in $\mcj$, replacing each occurrence of the  $i$th
variable declared in $\Delta$ with the $i$th term in~$\sigma$. This operation of
simultaneous substitution also appears in the derivation rules for substitutions
themselves, which we present in the following table together with the rules for
the formation of well-formed contexts:
\begin{equation}\label{eq:context-substitution-rules}
\begin{array}{c@{\qquad\qquad}c}
            \ax{\phantom,}
            \ui{\ecxt\sent}
            \drap
&           \ax{\phantom,}
            \ui{\Gamma\sent():\ecxt}
            \drap
\\[15pt]    \ax{\Gamma\sent A}
            \ui{\Gamma,y:A\ent}
            \drap
&           \ax{\Gamma\sent\sigma:\Delta}
            \ax{\Delta\sent A}
            \ax{\Gamma\sent t:A[\sigma]}
            \ti{\Gamma\sent(\sigma,t):(\Delta,x:A)}
            \drap
\end{array}
\end{equation}
The two rules in the first line say respectively that the \emph{empty context}
$\varnothing$ is a context, and that for any context $\Gamma$, the \emph{empty
substitution} $()$ is a substitution to the empty context. The first rule in the
second line is known as \emph{context extension}, since it says that we can
extend any context by a well-formed sort in this context (here $y$ has to be a
`fresh' variable, i.e.\ a variable not appearing in $\Gamma$). The last rule
says that a substitution to an extended context is a pair of a substitution into
the original context and a term whose sort is a \emph{substitution instance} of
the extending sort---it wouldn't make sense to ask for $t$ to be of sort $A$
since $A$ is only well-formed in context $\Delta$, and we want something in
context $\Gamma$.

\subsection{The syntactic category of a generalized algebraic theory}
\label{suse:gat-syncat}
\begin{definition}\label{def:syncat}
The \emph{syntactic category} $\mcc[\TT]$ of a generalized algebraic theory
$\TT$ is given as follows.
\begin{itemize}
\item The objects are the contexts of $\Gamma$ \emph{modulo derivable equality},
\ie contexts $\Gamma$ and $\Delta$ are identified if the judgment
$\Gamma=\Delta\ent$ is derivable.
\item Similarly, morphisms $[\Gamma]\to[\Delta]$ from the equivalence class of
$\Gamma$ to the equivalence class of $\Delta$ are substitutions
$\Gamma\ent\sigma:\Delta$ modulo derivable equality. (The closure conditions on
the set of derivable judgments ensure independence of representatives, e.g.\
that $\Gamma'\ent\sigma:\Delta'$ whenever $\Gamma\ent\sigma:\Delta$ and
$\Gamma=\Gamma'\ent$ and $\Delta=\Delta'\ent$.)
\item Composition is given by substitution of representatives, and identities
are given by lists of variables:
\begin{itemize}
\item $[\Delta\ent\tau:\Theta]\circ[\Gamma\ent\sigma:\Delta]
=[\Gamma\sent\tau[\sigma]:\Theta]$
\item $\id_\Gamma=(\Gamma\ent(\vec x):\Gamma)$ where $\vec x$ is the list of
variables declared in $\Gamma$.
\end{itemize}
\end{itemize}
\end{definition}
The syntactic category $\mcc[\TT]$ of a GAT $\TT$ has the structure of a
\emph{contextual category}:
\begin{definition}\label{def:cxtcat}
A \emph{contextual category} consists of
\begin{enumerate}[label=(\arabic*)]
\item a small category $\mcc$ with a {grading} function $\deg:\mcc_0\to\N$ on
its objects, and 
\item a presheaf $\Ty:\mcc\op\to\Set$, together with
\begin{itemize}
\item an object $\Gamma.A$ and an arrow $p_A:\Gamma.A\to\Gamma$ for each
$\Gamma\in\mcc$ and $A\in\Ty(\Gamma)$, and 
\item an arrow $\sigma.A:\Delta.A\sigma\to\Gamma.A$ for each
$\Gamma\in\mcc$, $A\in\Ty(\Gamma)$, and $\sigma:\Delta\to\Gamma$, 
\end{itemize}
\end{enumerate}
such that:
    \vspace{-2ex}
\begin{enumerate}
\item\label{def:cxtcat:pb}
The square 
$
\tikzsetnextfilename{HhkthvtZ}
\begin{mytikzcd}[row sep = small]
    |[label={[overlay, label distance=-2.5mm]-45:\lrcorner}]|
    \Delta.A\sigma
        \ar[r,"\sigma.A"]
        \ar[d,"p_{A\sigma}"']
\pnc   \Gamma.A
        \ar[d,"\,p_A"]
\\  \Delta
        \ar[r,"\sigma"]
\pnc   \Gamma
\end{mytikzcd}
$
is a pullback for all $A\in\Ty(\Gamma)$ and $\sigma:\Delta\to\Gamma$.
\item The mappings $(\Gamma,A)\mapsto \Gamma.A$ and $(\sigma,A)\mapsto\sigma.A$
constitute a functor $\El(\Ty)\to\mcc$.
\item We have $\deg(\Gamma.A)=\deg(\Gamma)+1$ for all $\Gamma\in\mcc$ and
$A\in\Ty(\Gamma)$.
\item There is a unique object $\ecxt$ of degree $0$, and $\ecxt$
is terminal.
\item For all $\Gamma$ with $\deg(\Gamma)>0$ there is a unique
$(\Gamma_0,A)\in\El(\Ty)$ with $\Gamma=\Gamma_0.A$.
\end{enumerate}
\end{definition}
In the case of the syntactic category $\mcc[\TT]$ of a GAT $\TT$, the grading
assigns to each context its length, and $\Ty(\Gamma)$ is the set of `types in
context $\Gamma$', i.e.\ equivalence classes of type expressions $A$ such that
$\Gamma\ent A$ is derivable, modulo the equivalence relation of derivable
equality. The presheaf action is given by substitution. Given a type
$A\in\Ty(\Gamma)$, the extended context $\Gamma.A$ is given by $\Gamma,y{:}A$
obtained via the context formation rule
in~\eqref{eq:context-substitution-rules}, and $p_A$ is the substitution 
\begin{equation*}
\Gamma,y{:}A\;\ent\;(\vec x)\;:\;\Gamma
\end{equation*}
where $\vec x$ is the list of variables declared in $\Gamma$. For
$\sigma:\Gamma\to\Delta$ and $A\in\Ty(\Delta)$, the substitution $\sigma.A$ is
given by 
\begin{equation*}
\Gamma,y{:}A[\sigma]\sent (\sigma,x)\;:\;\Delta,y{:}A\,.
\end{equation*}
Then the fact that the square in Definition \ref{def:cxtcat}\ref{def:cxtcat:pb}
is a pullback follows from the substitution formation rule in
\eqref{eq:context-substitution-rules} together with the equality rules for
substitutions that can be found in the cited references.

The following describes the relationship between contextual
categories and clans.
\begin{proposition}\label{prop:clan-from-cxcat}
Every contextual category $\mcc$ admits a clan structure where the display maps
are the composites $p_{A_1}\circ \dots\circ p_{A_n}\circ i$ of a finite sequence
of projections and an isomorphism. \qed
\end{proposition}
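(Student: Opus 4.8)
The plan is to verify directly the three axioms of \cref{def:clan} for the class $\clant\dag$ of arrows of the stated form, which I abbreviate as $P\circ i$, where $P=p_{A_1}\circ\dots\circ p_{A_n}$ is a composite of projections (possibly empty, so that $P$ may be an identity) and $i$ is an isomorphism. With this convention an isomorphism is a display map by taking $n=0$, which already settles half of the closure axiom. The technical heart of the whole argument is a single commutation lemma that lets isomorphisms be moved past projections, which I would prove first and then reuse throughout.

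The commutation lemma reads: for an isomorphism $i:\Gamma\to\Delta$ and a projection $p_A:\Gamma.A\to\Gamma$ one has $i\circ p_A=p_{A'}\circ i'$ for a type $A'\in\Ty(\Delta)$ and an isomorphism $i'$. To prove it I set $A':=\Ty(i^{-1})(A)$ and instantiate the defining pullback square of \cref{def:cxtcat}\ref{def:cxtcat:pb} at $\sigma=i^{-1}:\Delta\to\Gamma$; since its bottom edge $i^{-1}$ is invertible, the top edge $i^{-1}.A:\Delta.A'\to\Gamma.A$ is an isomorphism, and commutativity of the square reads $p_A\circ(i^{-1}.A)=i^{-1}\circ p_{A'}$, which rearranges to the claim with $i'=(i^{-1}.A)^{-1}$. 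Iterating over the length of $P$ upgrades this to $i\circ P=P'\circ i''$ for a projection-composite $P'$ and an isomorphism $i''$. Closure under composition then follows at once: given display maps $P_1\circ i_1$ and $P_2\circ i_2$, I rewrite $i_2\circ P_1=P_1'\circ i_2'$ by the lemma, so the composite becomes $(P_2\circ P_1')\circ(i_2'\circ i_1)$, again of the required form. Together with the remark that isomorphisms are display maps, this establishes the isomorphism/composition-closure axiom.

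For the pullback-stability axiom I pull back one projection at a time. For a single projection $p_A$ along an arbitrary $s:\Delta\to\Gamma$, the square of \cref{def:cxtcat}\ref{def:cxtcat:pb} is by definition a pullback and its pulled-back leg $p_{As}$ is again a projection; pasting such squares (using the functoriality of the contextual structure) shows that the pullback of a projection-composite $P$ along $s$ exists and is itself a projection-composite. For a general display map $P\circ i$ the trailing isomorphism is harmless: writing $P:\Gamma''\to\Gamma$ and $i:\Gamma'\to\Gamma''$, if the pullback of $P$ along $s$ has legs $Q:\Delta'\to\Delta$ and $s':\Delta'\to\Gamma''$, then $Q$ together with $i^{-1}\circ s'$ is a pullback of $P\circ i$ along $s$, so the pulled-back arrow is still $Q$, a display map.

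Finally, for the terminal-object axiom, the fourth clause of \cref{def:cxtcat} supplies the unique degree-$0$ object $\ecxt$ as terminal object, and repeated application of the unique-decomposition property in the fifth clause exhibits every object $\Gamma$ as a tower $\ecxt.A_1.\dots.A_n$ whose terminal projection is the projection-composite $p_{A_1}\circ\dots\circ p_{A_n}$, hence a display map. I expect the commutation lemma to be the only genuine obstacle---everything else is bookkeeping with pasted pullbacks and tower decompositions---and the one point demanding care is the systematic use of the fact that pullbacks of isomorphisms are isomorphisms, which is exactly what reconciles the trailing isomorphisms in the definition with the strictly functorial structure of a contextual category.
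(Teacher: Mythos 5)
The paper states this proposition without proof (the \qed\ after the statement signals the verification is omitted as routine), so there is nothing to diverge from; your argument is correct and is exactly the intended routine check. The commutation lemma is indeed the one real point --- instantiating the structural pullback square at $\sigma=i^{-1}$ and inverting the top edge $i^{-1}.A$ (an isomorphism, being a pullback of one) correctly yields $i\circ p_A=p_{A'}\circ i'$ with $A'=\Ty(i^{-1})(A)$ --- and the remaining steps (absorbing the trailing isomorphism into the second leg of the pullback cone, pasting the structural squares for projection-composites, and using the grading together with the unique-decomposition clause to write every object as a tower over the terminal object $\ast$) all go through as you describe.
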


\section{The fat small object argument for clans}\label{se:fsoa}

\subsection{Colimit decomposition formula and pushouts of sieves}

This subsection discusses two results that are needed in the proof of the fat
small object argument. 

\begin{theorem}[Colimit decomposition formula (CDF)]\label{thm:cdm}Let $\CC:\JJ\to\Cat$ be a small diagram in the $1$-category of small categories,
and let $D:\colim(\CC)\to\mfx$ be a diagram in a category $\mfx$ such that
\begin{enumerate}
\item for all $j\in\JJ$, the colimit $\colim_{c\in\CC_j}
D_{\sigma_jc}=\colim(\CC_j\xrightarrow{\sigma_j}\colim(\CC)\stackrel{D}\to\mfx)$
exists, and 
\item the iterated colimit $\colim_{j\in\JJ}\colim_{c\in\CC_j} D_{\sigma_jc}$
exists.
\end{enumerate}
Then $\colim_{j\in\JJ}\colim_{c\in\CC_j} D_{\sigma_jc}$ is a colimit of $D$.
\end{theorem}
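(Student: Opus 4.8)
The plan is to verify that the iterated colimit, call it $W:=\colim_{j\in\JJ}\colim_{c\in\CC_j}D_{\sigma_jc}$, satisfies the universal property of the $\colim(\CC)$-indexed colimit of $D$, by comparing cocones. Write $K:=\colim(\CC)$ and, for any small category $E$ and functor $G:E\to\mfx$, write $\mathrm{Cc}_E(G,X):=[E,\mfx](G,\Delta X)$ for the set of cocones on $G$ with vertex $X\in\mfx$. Setting $V_j:=\colim_{c\in\CC_j}D_{\sigma_jc}$ (so $W=\colim_jV_j$), the inner universal property gives $\mfx(V_j,X)\cong\mathrm{Cc}_{\CC_j}(D\sigma_j,X)$ and the outer one gives $\mfx(W,X)\cong\lim_{j\in\JJ\op}\mfx(V_j,X)$, whence $\mfx(W,X)\cong\lim_{j\in\JJ\op}\mathrm{Cc}_{\CC_j}(D\sigma_j,X)$ naturally in $X$. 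Thus it suffices to produce a bijection $\mathrm{Cc}_K(D,X)\cong\lim_{j\in\JJ\op}\mathrm{Cc}_{\CC_j}(D\sigma_j,X)$ natural in $X$; the element of $\mfx(W,X)$ corresponding to $\id_W$ then supplies the comparison cocone exhibiting $W$ as $\colim_KD$.

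The forward map restricts a cocone $\lambda:D\Rightarrow\Delta X$ along the coprojections $\sigma_j:\CC_j\to K$ to the family $(\lambda\sigma_j)_j$; since $\sigma_{j'}\CC_\alpha=\sigma_j$ for every $\alpha:j\to j'$ in $\JJ$, one has $(\lambda\sigma_{j'})\CC_\alpha=\lambda\sigma_j$, so the family is compatible and lies in the limit. To prove this is a bijection I would pass to opposite categories: a cocone on $G$ is the same datum as a cone on the contravariant hom, so $\mathrm{Cc}_E(G,X)=\lim_{E\op}\mfx(G{-},X)$, and because $(-)\op$ is an isomorphism of $\Cat$ it preserves the colimit, giving $K\op=\colim_{j\in\JJ}\CC_j\op$. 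The statement becomes the interchange $\lim_{K\op}P\cong\lim_{j\in\JJ\op}\lim_{\CC_j\op}(P\sigma_j\op)$ for the functor $P:=\mfx(D{-},X):K\op\to\Set$.

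This interchange is where the real work sits, and I would deduce it from the universal property of $K\op=\colim_j\CC_j\op$ in the $1$-category $\Cat$. Mapping out of this colimit into $\Set$ yields a \emph{strict} limit of functor categories $[K\op,\Set]\cong\lim_{j\in\JJ\op}[\CC_j\op,\Set]$: at the level of objects this is exactly the $1$-dimensional universal property (a functor on $K\op$ is a compatible family of functors on the $\CC_j\op$), and at the level of morphisms a natural transformation on $K\op$ is likewise a compatible family of natural transformations. Reading $\lim_{K\op}P$ as the hom-set $[K\op,\Set](\Delta1,P)$, where $\Delta1$ is the functor constant at the one-point set, and transporting both $\Delta1$ and $P$ across this identification, a morphism $\Delta1\Rightarrow P$ decomposes into a compatible family of morphisms $\Delta1\Rightarrow P\sigma_j\op$, i.e. a compatible family of cones; this is precisely an element of $\lim_{j\in\JJ\op}\lim_{\CC_j\op}(P\sigma_j\op)$, and unwinding recovers the compatible families of cocones produced by the forward map. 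Naturality in $X$ is immediate from postcomposition.

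The main obstacle is the identification $[K\op,\Set]\cong\lim_{j}[\CC_j\op,\Set]$ as a \emph{strict} limit of categories, encompassing both functors and natural transformations, and in particular the care needed because $K=\colim(\CC)$ denotes the ordinary $1$-categorical colimit in $\Cat$ rather than the Grothendieck construction (oplax colimit): one must invoke the genuine universal property in both dimensions and resist the temptation to model $K$ by $\int\CC$. Two smaller points to check along the way are that $(-)\op$ preserves the colimit (being an isomorphism of $\Cat$) and that every bijection above is natural in $X$, so that the concluding comparison is an actual cocone and not merely a bijection of underlying sets.
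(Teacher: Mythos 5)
Your proposal is correct, but it takes a genuinely different route from the paper's. The paper follows Peschke--Tholen's ``via Fubini'' argument: it forms the Grothendieck construction $\gcons\CC$, observes that $\colim(\CC)$ is the joint coidentifier of the splitting of the opfibration $\gcons\CC\to\JJ$, deduces that the comparison functor $E:\gcons\CC\to\colim(\CC)$ is \emph{final} because $(-\circ E)$ is fully faithful, and then computes $\colim(D\circ E)$ by fiberwise left Kan extension along the split opfibration, which produces exactly the iterated colimit. You instead verify the universal property directly, reducing everything to an interchange of limits of cocone-sets, which you extract from the strict identification $[K\op,\Set]\cong\lim_{j\in\JJ\op}[\CC_j\op,\Set]$. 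Your route is more elementary---no Grothendieck construction, no finality, no Kan extensions---and, like the paper's, it works under the minimal existence hypotheses, since you only ever manipulate hom- and cocone-sets and never need $\mfx$ to be cocomplete. What the paper's route buys is the reusable structural fact that $E$ is final (the ``crucial point'' it highlights) and a conceptual reading of the formula as a Fubini theorem for colimits over an opfibration, with a pointer to the literature. One step in your argument deserves an explicit extra line: the morphism-level half of $[K\op,\Set]\cong\lim_{j\in\JJ\op}[\CC_j\op,\Set]$ is not literally part of the $1$-dimensional universal property of $K\op=\colim_{j}\CC_j\op$, so it cannot simply be ``invoked''; it follows either from cartesian closedness of $\Cat$ (each $\AA\x-$ is a left adjoint, so $\Cat(\AA\x\colim_j\CC_j\op,\Set)\cong\lim_j\Cat(\AA\x\CC_j\op,\Set)$ naturally in $\AA$, and Yoneda gives the isomorphism of functor categories), or equivalently by applying the $1$-dimensional universal property with target $\Set^\bbtwo$, encoding a compatible family of natural transformations as a compatible family of functors into the arrow category and gluing. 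With that observation supplied, your proof is complete, and all remaining steps (functoriality of $j\mapsto\colim_{c\in\CC_j}D_{\sigma_jc}$, naturality in $j$ and in $X$, and the fact that the composite bijection is the restriction map) check out as you describe.
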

\begin{proof}
Peschke and Tholen~\cite{peschke2020diagrams} give three proofs of this under
the additional assumption that $\mfx$ is cocomplete. The third proof (\Section
5.3, `via Fubini') easily generalizes to the situation where only the necessary
colimits are assumed to exist. We sketch a simplified argument here. Let
$\gcons\CC$ be the {covariant Grothendieck construction} of $\CC$, whose
projection $\gcons\CC\to\JJ$ is a split opfibration. Then $\colim(\CC)$ is the
`joint coidentifier' of the splitting, \ie there is a functor
$E:\gcons\CC\to\colim(\CC)$
such that for every category $\mfx$, the precomposition functor
\begin{equation*}
(-\circ E):[\colim(\CC),\mfx]\to [\gcons \CC,\mfx]
\end{equation*}
restricts to an {isomorphism} between the functor category $[\colim(\CC),\mfx]$
and the full subcategory of $[\gcons \CC,\mfx]$ on functors which send the
arrows of the splitting to identities. In particular, $(-\circ E)$ is fully
faithful and thus it induces an isomorphism 
\begin{equation*}
(\colim(\CC))(D,\Delta -)\congto(\gcons\CC)(D\circ E,\Delta -):\mfx\to\Set
\end{equation*}
of co-presheaves of cocones for every diagram $D:\colim(\CC)\to\mfx$. In other
words, $E$ is \emph{final}, which is the crucial point of the argument, and for
which Peschke and Tholen give a lengthier proof
in~\cite[Theorem~5.8]{peschke2020diagrams}.

Finality of $E$ implies that $D$ has a colimit if and only if $D\circ E$ has a
colimit, and the existence of the latter follows if successive left Kan
extensions along the composite $\gcons \CC\to \JJ \to 1$ exist. The first of
these can be computed as fiberwise colimit since $\gcons\CC\to\JJ$ is a split
cofibration~\cite[Theorem~4.6]{peschke2020diagrams}, which yields the inner term
in the double colimit in the proposition.
\end{proof}

In the following we use the CDF specifically  for {pushouts of sieve inclusions}
of posets. Recall that a \emph{sieve} (\aka \emph{downset} or \emph{lower set})
in a poset $P$ is a subset $U\subs P$ satisfying 
\begin{equation*}
x\in U \wedge y\leq x \simplies y\in U
\end{equation*}
for all $x,y\in P$. A monotone map $f:P\to Q$ is called a
\emph{sieve inclusion} if it is order-reflecting and its image $\image(f)=f[P]$
is a sieve in $Q$. The proof of the following lemma is straightforward, but we
state it explicitly since it will play a central role.
\begin{lemma}\label{lem:sieve-pushout} 
\begin{enumerate}
\item\label{lem:sieve-pushout:plus}
If $f:P\to Q$ and $g:P\to R$ are sieve
inclusions of posets, a pushout of $f$ and $g$ in  the $1$-category $\Cat$ of
small categories is given by 
\tikzsetnextfilename{OEIjjqAC}
\begin{equation*}\begin{mytikzcd}
{}  P
        \ar[r,"g"]
        \ar[d,"f"']
\pnc  R
        \ar[d,"\sigma_2"]
\\  Q
        \ar[r,"\sigma_1"]
\pnc  |[label={[overlay, label distance=-2.5mm]135:\ulcorner}]| Q+_{P}R
\end{mytikzcd}\end{equation*}
where $Q+_{P}R$ is the set-theoretic pushout, ordered by 
\begin{alignat*}{2}
\sigma_1(x)&\leq \sigma_1(y) &\qquad&\text{iff }\; x\leq y\\
\sigma_1(x)&\leq \sigma_2(y) &&\text{iff }\; \ex z\qdot x=f(z)\wedge g(z)\leq y \\
\sigma_2(x)&\leq \sigma_2(y) &&\text{iff }\; x\leq y\\
\sigma_2(x)&\leq \sigma_1(y) &&\text{iff }\; \ex z\qdot x=g(z)\wedge f(z)\leq y .
\end{alignat*}
In particular, the maps $\sigma_1$ and $\sigma_2$ are also sieve inclusions.
\item\label{lem:sieve-pushout:uv}
If $U$ and $V$ are sieves in a poset $P$ then the square
\tikzsetnextfilename{FVjMgoGq}
\begin{equation*}\begin{mytikzcd}[sep = small]
U\cap V
        \ar[r]
        \ar[d]
\pnc	V
        \ar[d]
\\	U
        \ar[r]
\pnc  |[label={[overlay, label distance=-2.5mm]135:\ulcorner}]| U\cup V
\end{mytikzcd}
\end{equation*}
is a pushout in $\Cat$, where the sieves are equipped with the induced ordering.
\end{enumerate}
 \qed
\end{lemma}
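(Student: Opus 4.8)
The plan is to prove the first statement by verifying directly that the poset $Q+_PR$, equipped with the four-clause order displayed above, satisfies the universal property of the pushout of $f$ and $g$ in $\Cat$; the second statement will then follow by specialization. As a preliminary I would note that a sieve inclusion is in particular order-reflecting, hence injective (if $f(x)=f(y)$ then $x\le y$ and $y\le x$, so $x=y$), so that the underlying set of $Q+_PR$ is genuinely the quotient of $Q\sqcup R$ identifying $f(z)$ with $g(z)$ for $z\in P$, and $\sigma_1,\sigma_2$ are injective on objects. Thus every element is uniquely of the form $\sigma_1(x)$ or $\sigma_2(y)$, with $\sigma_1(f(z))=\sigma_2(g(z))$ the only identifications.

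First I would check that the displayed relation is a partial order. Reflexivity is immediate and antisymmetry is routine; transitivity is the only part using the hypotheses, and I would organize it by recording in which of the two components ($Q$ or $R$) each of the three elements lies. The mixed cases are exactly where the sieve conditions enter: for instance, to pass from $\sigma_1(x)\le\sigma_1(y)$ and $\sigma_1(y)\le\sigma_2(w)$ to $\sigma_1(x)\le\sigma_2(w)$, one has $y=f(z)$ with $g(z)\le w$, and since $x\le y=f(z)$ with $\image(f)$ a sieve in $Q$, also $x\in\image(f)$, say $x=f(z')$ with $z'\le z$; then $g(z')\le g(z)\le w$ yields the conclusion. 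The symmetric clauses use that $\image(g)$ is a sieve in $R$, and the clause $\sigma_1(x)\le\sigma_2(y)\le\sigma_1(w)$ uses order-reflection of $g$. The same bookkeeping shows that $\sigma_1,\sigma_2$ are monotone and order-reflecting with downward-closed images, i.e.\ sieve inclusions.

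Next I would verify the universal property. Given a category $\mfx$ and functors $u\colon Q\to\mfx$, $v\colon R\to\mfx$ with $uf=vg$, the object map of any mediating $w$ is forced by the set-level pushout, and is well defined on the overlap precisely because $uf=vg$. Since $Q+_PR$ is thin, $w$ is determined on morphisms, so it remains to check that the assignment is functorial, i.e.\ that each instance $a\le b$ yields a morphism $w(a)\to w(b)$ compatibly with composition. The key simplification is that in the mixed clause the witness $z$ with $x=f(z)$ is \emph{unique} by injectivity of $f$; hence $\sigma_1(x)=\sigma_2(g(z))$ and the relation $\sigma_1(x)\le\sigma_2(y)$ is sent simply to $v(g(z)\le y)$. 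Thus every order instance maps to the image under $u$ or $v$ of a single relation in $Q$ or $R$, with no choices involved, and functoriality reduces to exactly the transitivity case analysis of the previous step. Uniqueness is immediate from thinness together with $w\sigma_1=u$ and $w\sigma_2=v$.

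Finally, the second statement follows by applying the first to the inclusions $U\cap V\incl U$ and $U\cap V\incl V$, which are sieve inclusions since an intersection of sieves is a sieve and the inclusion of one sieve into a larger one is order-reflecting with downward-closed image. The set-theoretic pushout is then $U\cup V$, and it only remains to see that the order produced by the first statement agrees with the order induced from $P$. The sole non-immediate clause concerns $x\in U$ and $y\in V$: the first statement makes $x\le y$ hold iff $x\in U\cap V$ and $x\le_P y$, whereas the induced order asks only $x\le_P y$; these coincide because $V$ is a sieve, so $x\le_P y\in V$ already forces $x\in V$, hence $x\in U\cap V$. I expect the transitivity and functoriality case analysis to be the only real work, and it is driven entirely by the two sieve hypotheses together with order-reflection.
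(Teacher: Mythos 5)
Your proposal is correct: the paper states this lemma without proof (marking it \qed\ as ``straightforward''), and your direct verification of the universal property in $\Cat$ is precisely the intended routine argument, with the key point handled correctly --- namely that every comparability $\sigma_1(x)\leq\sigma_2(y)$ is itself the $\sigma_2$-image of the single relation $g(z)\leq y$ (the witness $z$ being unique by injectivity of $f$), so the mediating functor's action on morphisms is forced and functoriality reduces to the transitivity case analysis. The only checks you wave through that deserve a line in a written-up version are antisymmetry and the consistency of the four order clauses on the identified elements $\sigma_1(f(z))=\sigma_2(g(z))$, both of which follow from order-reflection and injectivity of $f$ and $g$.
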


\subsection{The fat small object argument}

Throughout this subsection let $\coclanc$ be a coclan. 

\medskip

We start by establishing some notation. Given a poset $P$ and an element $x\in
P$, we write $\idl{P}{x} = \setof{y\in P}{y\leq x}$ for the principal sieve
generated by $x$, and $\sidl{P}{x} = \setof{y\in P}{y< x}$ for its subset on
elements that are strictly smaller than $x$. If $x$ is a maximal element of $P$,
we write $P\setmin x$ for the sub-poset obtained by removing $x$. Given a
diagram $D:P\to\coclanc$, we write $\idl D x$, $\sidl D x$, and $D\setmin x$ for
the restrictions of $D$ to $\idl P x$, $\sidl P x$, and $P\setmin x$,
respectively. More generally we write $D_U$ for the restriction of $D$ to
arbitrary sieves $U\subs P$.

Note that we have $\idl P x = \sidl P x \star 1$, where $\star$ is the
\emph{join} or \emph{ordinal sum}, thus diagrams $D : \idl P x\to\coclanc$ are
in correspondence with cocones on $\sidl D x$ with vertex $D_x$, and with arrows
$\colim(\sidl D x)\to D_x$ whenever the colimit exists.

\begin{definition}\label{def:fcompl} A \emph{finite $\coclanc$-complex} is a
pair $\PD$ of a finite poset $P$ and a diagram $D:P\to\coclanc$, such that:
\begin{enumerate}
\item\label{def:fcompl:codisp}$\colim(\sidl D x)$ exists for all $x\in P$, and the induced $\alpha_x :
\colim(D_{<x}) \to D_x$ is \codisplay.
\item\label{def:fcompl:eq}For $x,y\in P$ we have $x=y$ whenever $P_{<x}=P_{<y}$, $D_x=D_y$, and $\alpha_x=\alpha_y$. 
\end{enumerate}
An \emph{inclusion of finite $\coclanc$-complexes} $f:\PD\to \QE$ is a sieve
inclusion $f:P\to Q$ such that $D=E\circ f$. We write $\FC(\coclanc)$ for the
category of finite $\coclanc$-complexes and inclusions.
\end{definition}
\begin{remark}
We view a \fcc as a construction of an object by a finite (though not
necessarily linearly ordered) number of `cell attachments', represented by the
\codisplay maps $\alpha_x:\colim(D_{<x})\codito D_x$. Condition
\ref{def:fcompl:eq} should be read as saying that `every cell can only be
attached once at the same stage'. This is needed in Lemma~\ref{lem:slat} to show
that $\FCC$ is a preorder.
\end{remark}
\begin{lemma}\label{lem:fcomp-col-ex}
\begin{enumerate}
\item The colimit $\colim(D)$ exists for every \fcc $\PD$.
\item The induced functor 
\begin{equation}\label{eq:colim-fun}
\Colim : \FCC\to\coclanc
\end{equation}
sends inclusions of \fccs
to \codisplay maps.
\end{enumerate}
\end{lemma}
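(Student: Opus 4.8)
The plan is to prove both statements simultaneously by induction on the finite poset $P$, peeling off one maximal element at a time and using the colimit decomposition formula (Theorem~\ref{thm:cdm}) together with the sieve pushout lemma (Lemma~\ref{lem:sieve-pushout}\ref{lem:sieve-pushout:uv}) to exhibit $\colim(D)$ as a single pushout of a \codisplay map. In fact I would prove the slightly stronger statement that for any maximal element $x\in P$ the canonical comparison map $\colim(D_{P\setmin x})\to\colim(D)$ exists and is a \codisplay map; part~(i) is the existence half of this, and part~(ii) will follow by composing such maps.

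For the inductive step of part~(i), fix a maximal element $x\in P$. Then $P\setmin x$, $\idl P x$, and $\sidl P x$ are all sieves in $P$ with $(P\setmin x)\cap\idl P x=\sidl P x$ and $(P\setmin x)\cup\idl P x=P$, so by Lemma~\ref{lem:sieve-pushout}\ref{lem:sieve-pushout:uv} the poset $P$ is the pushout of $P\setmin x\hookleftarrow\sidl P x\hookrightarrow\idl P x$ in $\Cat$. I would apply the CDF to this span-shaped diagram of categories. Its three inner colimits are $\colim(D_{P\setmin x})$, $\colim(D_{<x})$, and $\colim(D_{\idl P x})$: the first exists by the inductive hypothesis, since $(P\setmin x,D_{P\setmin x})$ is again a \fcc (maximality of $x$ gives $\sidl{(P\setmin x)}{y}=\sidl P y$ for $y\neq x$, so all data of Definition~\ref{def:fcompl} are inherited); the second exists by condition~\ref{def:fcompl:codisp}; and the third equals $D_x$ because $x$ is a terminal object of the category $\idl P x$. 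The CDF then identifies $\colim(D)$, should it exist, with the pushout
\[
\colim(D_{P\setmin x})\;+_{\colim(D_{<x})}\;D_x,
\]
formed along the comparison map $\colim(D_{<x})\to\colim(D_{P\setmin x})$ induced by $\sidl P x\subs P\setmin x$ and along $\alpha_x:\colim(D_{<x})\codito D_x$. Since $\alpha_x$ is a \codisplay map, this pushout exists and the map $\colim(D_{P\setmin x})\to\colim(D)$ is a \codisplay map by the defining axioms of a coclan; the CDF then guarantees that this pushout really is $\colim(D)$. The base case $P=\varnothing$ gives the initial object $0$, completing the induction.

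For part~(ii), let $f:\PD\to\QE$ be an inclusion, so $f$ is a sieve inclusion with $E\circ f=D$ and $\image(f)$ a sieve in $Q$. I would induct on $\abs{Q\setmin \image(f)}$. If this is zero, then $f$ is an isomorphism of posets and $\Colim(f)$ is an isomorphism, hence a \codisplay map. Otherwise $Q\setmin\image(f)$ is a nonempty finite upset and so has a maximal element $x$, which is then maximal in $Q$; thus $f$ factors as $P\to Q\setmin x\hookrightarrow Q$ through the subcomplex carried by $Q\setmin x$, with strictly fewer cells left to attach. The first map is sent by $\Colim$ to a \codisplay map by the inductive hypothesis, and the inclusion $Q\setmin x\hookrightarrow Q$ is sent to a \codisplay map by the strengthened form of part~(i) established above. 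As \codisplay maps are closed under composition and $\Colim$ is a functor, $\Colim(f)$ is a \codisplay map.

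The routine obstacles are the bookkeeping verifications that $(P\setmin x,D_{P\setmin x})$ is again a \fcc and that the two hypotheses of the CDF---existence of the inner colimits and of the iterated colimit---are met. The only genuinely structural point, which I expect to be the crux, is recognizing via the sieve pushout lemma and the CDF that attaching the single top cell $\alpha_x$ turns the colimit of the subcomplex into a pushout of a \codisplay map; this is precisely the situation the coclan axioms are designed to control.
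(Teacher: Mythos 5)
Your proposal is correct and takes essentially the same route as the paper's proof: peel off a maximal element $x$, use Lemma~\ref{lem:sieve-pushout} and the colimit decomposition formula to exhibit $\colim(D)$ as the pushout of $\alpha_x$ along the comparison map $\colim(\sidl D x)\to\colim(D\setmin x)$ (which exists and makes $\colim(D\setmin x)\to\colim(D)$ a \codisplay map by the coclan axioms), and then obtain part (ii) by factoring a general inclusion into atomic ones and composing. The only difference is that you make explicit some bookkeeping the paper leaves implicit, namely that $(P\setmin x,D_{P\setmin x})$ is again a \fcc and that the factorization into atomic inclusions proceeds by removing maximal elements of the complement of the image.
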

\begin{proof}
The first claim is shown by induction on $\abs{P}$. For empty $P$ the statement
is true since coclans have initial objects. For $\abs{P}=n+1$ assume that $x\in
P$ is a maximal element. Then the square 
\tikzsetnextfilename{YRQlDhMK}
\begin{equation*}
\begin{mytikzcd}[row sep = small]
    \sidl P x
        \ar[d]
        \ar[r]
\pnc	
    P\setmin x
        \ar[d]
\\
    \idl P x
        \ar[r]
\pnc
    |[label={[overlay, label distance=-3mm]135:\ulcorner}]|P
\end{mytikzcd}
\end{equation*}
is a pushout in $\Cat$ by Lemma~\ref{lem:sieve-pushout}, which by the {colimit
decomposition formula}~\ref{thm:cdm} means that the pushout of the span
\begin{equation}\label{eq:cdf-square}
\tikzsetnextfilename{AqJPLRGI}
\begin{mytikzcd}
    \colim(\sidl{D}{x})
        \ar[d,{Triangle[open,reversed]->}]
        \ar[r]
\pnc	\colim(D\setmin x)
        \ar[d,dashed,{Triangle[open,reversed]->}]
\\	D_x
        \ar[r,dashed]
\pnc   |[label={[overlay, label distance=-3mm]135:\ulcorner}]|\colim(D)
\end{mytikzcd}\end{equation}
---which exists since the left arrow is a \codisplay map by
\ref{def:fcompl}\ref{def:fcompl:codisp}---is a colimit of $D$ in $\coclanc$.

\medskip

For the second claim let $f:(E,Q)\to(D,P)$ be an inclusion of \fccs. Since every
inclusion of \fccs can be decomposed into `atomic' inclusions with
$\abs{P\setmin f[Q]}=1$, we may assume without loss of generality that
$Q=P\setmin x$ for some maximal $x\in P$. Then the image of $f$ under $\Colim$
is the right dashed arrow in \eqref{eq:cdf-square}, which is \codisplay since
\codisplay maps are stable under pushout.
\end{proof}
\begin{remark}
Lemma~\ref{lem:fcomp-col-ex} implies that the assumption `$\colim(\sidl D x)$
exists' in Definition~\ref{def:fcompl}\ref{def:fcompl:codisp} is redundant,
since the colimits in question are colimits of finite subcomplexes.
\end{remark}

\begin{lemma}\label{lem:slat}The category $\FCC$ is an essentially small preorder with finite joins.
\end{lemma}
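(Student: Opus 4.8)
The plan is to handle the three assertions --- essential smallness, the preorder property, and the existence of finite joins --- separately, the construction of binary joins being the substantial part.

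Essential smallness is a counting argument: an object of $\FCC$ is a finite poset $P$ equipped with a functor $D:P\to\coclanc$, and since $\coclanc$ is small and there is only a set of isomorphism types of finite posets, there is only a set of such pairs up to isomorphism. For the preorder property I would show that any two inclusions $f,g:\PD\to\QE$ coincide, by well-founded induction on $P$. First I would record that a sieve inclusion is injective and order\-/reflecting, so that for each $x\in P$ one has $f[\sidl{P}{x}]=\sidl{Q}{f(x)}$ and likewise for $g$: indeed $\image(f)$ is a sieve containing $f(x)$, so every $z<f(x)$ is of the form $f(y)$ with $y<x$. Assuming inductively that $f$ and $g$ agree on $\sidl{P}{x}$, the two sieves $\sidl{Q}{f(x)}$ and $\sidl{Q}{g(x)}$ are then equal, the objects $E_{f(x)}=D_x=E_{g(x)}$ agree (since $D=E\circ f=E\circ g$), and the attaching maps $\alpha_{f(x)}$ and $\alpha_{g(x)}$ coincide, both being induced by the cocone $(D_y\to D_x)_{y<x}$ transported along $f=g$. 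Condition \ref{def:fcompl}-\ref{def:fcompl:eq} applied in $\QE$ then forces $f(x)=g(x)$, completing the induction.

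For finite joins I need an initial object and binary joins. The empty complex is initial, since the empty sieve inclusion into any complex exists and is unique by the preorder property. For binary joins I would construct $\PD\vee\QE$ by induction on $\abs{Q}$, attaching the cells of $Q$ to $\PD$ one at a time. If $Q=\varnothing$ the join is $\PD$. Otherwise pick a maximal $y\in Q$, and let $Z'=(R,F)$ be the join of $\PD$ with the subcomplex $(Q\setmin y, E\setmin y)$, which exists by induction. Under the inclusion of this subcomplex into $Z'$ the sieve $\sidl{Q}{y}$ maps to a sieve $U\subseteq R$, and the attaching map $\alpha_y$ is transported to a \codisplay map out of $\colim(F_U)$ (the colimits agree since inclusions preserve the restricted diagram). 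I then distinguish two cases: if $Z'$ already contains a cell with boundary $U$, object $E_y$, and this attaching map, then the cell is already present and the join is $Z'$; otherwise I adjoin a new top element over $U$ carrying the data of $y$, which is again a valid complex by \ref{def:fcompl}-\ref{def:fcompl:codisp} (the transported attaching map is still \codisplay) and \ref{def:fcompl}-\ref{def:fcompl:eq} (no duplicate cell is created, precisely because we are in the second case). Verifying the universal property then reduces, via the preorder property, to checking that the result dominates both $\PD$ and $\QE$ and is dominated by any common upper bound $W$: the latter holds because $W\ge\PD$ and $W\ge(Q\setmin y,E\setmin y)$ force $W\ge Z'$ by induction, while $W\ge\QE$ provides the missing cell $y$; the uniqueness of inclusions makes these two factorizations automatically compatible.

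The main obstacle I anticipate is the binary join: keeping the case analysis honest and confirming that the adjoined complex genuinely satisfies Definition~\ref{def:fcompl}, in particular that the \emph{detection} of an already\-/present cell is governed exactly by condition \ref{def:fcompl}-\ref{def:fcompl:eq}. Everything else --- that sieve inclusions compose, that the transported attaching map is \codisplay, and that the relevant colimits are preserved under restriction to $U$ --- is routine given Lemmas~\ref{lem:sieve-pushout} and \ref{lem:fcomp-col-ex}.
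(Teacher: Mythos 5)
Your proposal is correct and follows essentially the same route as the paper: the same counting argument for essential smallness, the same well-founded induction using condition \ref{def:fcompl}-\ref{def:fcompl:eq} for the preorder property, and the same inductive construction of binary joins with the identical two-case analysis (cell already present versus attaching a fresh cell). The only differences are cosmetic: you induct on $\abs{Q}$ rather than $\abs{P}$, describe the one-cell attachment directly instead of as the pushout $P+_{P\setmin x}R$ of sieve inclusions from Lemma~\ref{lem:sieve-pushout}, and spell out the least-upper-bound verification that the paper leaves implicit.
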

\begin{proof} $\FC(\coclanc)$ is essentially small as a collection of finite
diagrams in a small category. To see that it is a preorder let
$f,g:\PD\to\QE$ be inclusions of \fccs. We show that $f(x) = g(x)$ by
well-founded induction on $x\in P$. Let $x\in P$ and assume that
$f(y)=g(y)$ for all $y<x$.
Then since $f$ and $g$ are sieve
inclusions we have $\sidl Q {f(x)} = \sidl Q {g(x)}$ and since $E\,f=D=E\,g$ we
have the equalities 
\begin{equation*}
\lrfami{E_y\to E_{f(x)}}{y<f(x)}
=\lrfami{D_y\to D_{x}}{y<x}
=\lrfami{E_y\to E_{g(x)}}{y<g(x)}
\end{equation*}
of cocones, whence $f(x)=g(x)$ by
Definition~\ref{def:fcompl}\ref{def:fcompl:eq}.

\medskip

It remains to show that $\FC(\coclanc)$ has finite suprema. The empty complex is
clearly initial. We show that a supremum of $\PD$ and $\QE$ exists by induction
on $\abs{P}$. The empty case is trivial, so assume that $P$ is inhabited and let
$x$ be a maximal element. Let $(R,F)$ be a supremum of $(P\setmin x, D\setmin
x)$ and $\QE$, with inclusion maps $f:(P\setmin x, D\setmin x)\to (R,F)$ and
$g:\QE\too(R,F)$. If there exists a $y\in R$ such that $\sidl R y = f[\sidl P
x]$ and $\lrfami{D_z\to D_x}{z<x} = \lrfami{R_{f(z)}\to R_y}{z<x}$ then `the
cell-attachment corresponding to $x$ is already contained in $(R,F)$', \ie $f$
extends to an inclusion $f':\PD\to(R,F)$ of finite complexes with $f'(x)=y$,
whence $(R,F)$ is a supremum of $\PD$ and $\QE$.

If no such $y$ exists then a supremum of $\PD$ and $(R,F)$ is given by 
$(P+_{P\setmin x}R,[D,F])$, as in the pushout diagram
\tikzsetnextfilename{jFmWHSSM}
\begin{equation*}\begin{mytikzcd}
    P\setmin x
        \ar[r,"f"']
        \ar[d,"", hook]
\pnc	R
        \ar[d,""]
        \ar[rdd, bend left=20,"F" description]
\\	P
        \ar[r,""]
        \ar[rrd, bend right=15,"D" description]
\pnc	|[label={[overlay, label distance=-3mm]135:\ulcorner}]| P+_{P\setmin x}R 
\ar[rd,"{[D,F]}" description, dashed]
\\[-20pt]\pnc\pnc[+20pt]
    \coclanc
\end{mytikzcd}\end{equation*}
constructed as in Lemma~\ref{lem:sieve-pushout}.
\end{proof}

\begin{theorem}\label{thm:ext-full-fact}
The object
$\textstyle C = \colim_{\PD\in\FCC}H(\colim(D))$
is a \z-extension in $\copalg$ and $C\to 1$ is full.
\end{theorem}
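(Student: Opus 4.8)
The plan is to verify the two assertions separately in the category $\copalg=\Mod(\coclanc\op)$ with its extension--full weak factorization system, exploiting throughout that by Lemma~\ref{lem:slat} the index category $\FCC$ is a directed poset: it has all finite joins, in particular a least element $\bot$ given by the empty complex. I would begin by recording the shape of the diagram $\PD\mapsto H(\colim D)=(H\circ\Colim)(\PD)$. Its value at $\bot$ is $H(0_{\coclanc})$, where $0_{\coclanc}=\colim(\varnothing)$ is the initial object of $\coclanc$; since $0_{\coclanc}$ is the terminal object $1_{\coclanc\op}$ of $\coclanc\op$ and $\copalg(H(1_{\coclanc\op}),A)\cong A(1_{\coclanc\op})=1$ for every model $A$, this value is the \emph{initial} model $0$ of $\copalg$. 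Moreover every inclusion of complexes is sent by $\Colim$ to a \codisplay map (Lemma~\ref{lem:fcomp-col-ex}), hence by $H$ to an extension (Remark~\ref{rem:wfsef}\ref{rem:wfsef:display-extension}). Thus $C$ is a directed colimit of \zexts whose transition maps are all extensions, and the colimit leg out of $\bot$ is a map $0\to C$.

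To show $C$ is a \zext it suffices to see that this leg $0\to C$ is an extension, i.e.\ that $0\to C$ has the left lifting property against an arbitrary full map $g\colon X\fullmap Y$. I would build the lift cell by cell. A map $C\to Y$ is a cocone $(y_{\PD})_{\PD}$ with $y_{\PD}\in Y(\colim D)$, and I construct a compatible family $(x_{\PD})$ with $x_{\PD}\in X(\colim D)$ and $g(x_{\PD})=y_{\PD}$ by well-founded recursion on $\abs{P}$. For the inductive step, pick a maximal element $x$ of $P$; by the colimit computation in Lemma~\ref{lem:fcomp-col-ex}, $\colim D$ is the pushout of $\alpha_x\colon\colim(D_{<x})\codito D_x$ and $\colim(D_{<x})\to\colim(D\setminus x)$. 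Since this is a pullback of a display map in $\coclanc\op$ and $X$ is a model, we get $X(\colim D)=X(D_x)\times_{X(\colim D_{<x})}X(\colim(D\setminus x))$; the factor in $X(\colim(D\setminus x))$ is supplied inductively, and the missing factor in $X(D_x)$ is then produced by the weak-pullback property of the $g$-naturality square at $\alpha_x$ (read as a display map, Remark~\ref{rem:wfsef}\ref{rem:wfsef:wpb-regepi}), which lifts the given element of $Y(D_x)$ over the chosen element of $X(\colim D_{<x})$. I expect the genuine obstacle here to be checking that the family so defined is a bona fide cocone---that $x_{\PD}$ restricts to $x_{\QE}$ along \emph{every} inclusion $\QE\hookrightarrow\PD$, not merely the one-cell removals used in the recursion; this coherence is where the preorder property of $\FCC$ (at most one inclusion between any two complexes) and a secondary induction are needed, and it is morally the content of the fat small object argument. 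Equivalently one may phrase this paragraph as realizing $0\to C$ as a transfinite composite of pushouts of generating extensions and invoking closure of the left class under pushout and transfinite composition.

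For fullness of $C\to 1$ I would use the weak-pullback characterization of full maps (Remark~\ref{rem:wfsef}\ref{rem:wfsef:wpb-regepi}): as the codomain is terminal, it reduces to surjectivity of $C(p)\colon C(\Delta)\to C(\Gamma)$ for every display map $p\colon\Delta\dito\Gamma$ of $\coclanc\op$, equivalently every \codisplay map $\bar p\colon\Gamma\to\Delta$ of $\coclanc$. Since $\FCC$ is directed, filtered colimits in $\copalg$ are pointwise (Remark~\ref{rem:mods}\ref{rem:mods:refl-lim-fcolim}), so $C(\Gamma)=\colim_{\PD\in\FCC}\coclanc(\Gamma,\colim D)$ and a class in $C(\Gamma)$ is represented by some $\chi\colon\Gamma\to\colim E$ with $\QE\in\FCC$. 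Given $\chi$, I enlarge $\QE$ by attaching a single maximal cell whose body is the pushout $\colim E+_{\Gamma}\Delta$ of the \codisplay map $\bar p$ along $\chi$ (which exists and is \codisplay by the coclan axioms), with attaching map the pushout inclusion $\colim E\codito\colim E+_{\Gamma}\Delta$. The resulting complex $\PD$ has $\colim D=\colim E+_{\Gamma}\Delta$, the inclusion $\QE\hookrightarrow\PD$ is sent by $\Colim$ to that pushout inclusion, and the second pushout inclusion $\psi\colon\Delta\to\colim D$ gives $C(p)[\PD,\psi]=[\PD,\psi\circ\bar p]=[\QE,\chi]$ (using $\psi\circ\bar p=\alpha_x\circ\chi$ and that $\alpha_x$ is the transition map), proving surjectivity. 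The only subtlety is condition~\ref{def:fcompl}\ref{def:fcompl:eq} (each cell is attached at most once): should the required cell already be present in $\QE$ one uses the existing one, exactly as in the case distinction in the proof of Lemma~\ref{lem:slat}.
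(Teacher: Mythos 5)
Your argument for fullness of $C\to 1$ is correct and is, modulo phrasing, the paper's own proof: where you represent a class in $C(\Gamma)$ at a finite stage using pointwiseness of filtered colimits, the paper factors a map $H(I)\to C$ through a colimit inclusion using compactness of $H(I)$ and filteredness of $\FCC$; both arguments then perform the identical cell attachment, extending $\QE$ to the join $(Q\star 1, E\star k)$ whose new cell has body the pushout $\colim E+_\Gamma\Delta$ and attaching map the pushout inclusion $k$. Your closing worry about condition \ref{def:fcompl:eq} of Definition~\ref{def:fcompl} is moot here: the new top cell has boundary sieve all of $Q$, whereas every $y\in Q$ has $\sidl{P}{y}\subseteq Q\setminus\braces{y}$, so the condition holds automatically and no case distinction is needed (the paper makes none; the case distinction you recall from Lemma~\ref{lem:slat} is needed there because the attached cell's boundary is a proper sieve $f[\sidl{P}{x}]$, not the whole poset).

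The \zext half, however, has a genuine gap, and the recursion as you organize it would in fact fail. If $x$ is maximal but not greatest in $P$, then the principal subcomplex $(\idl{P}{x},\idl{D}{x})$ has strictly fewer cells, so by the time your recursion treats $\PD$ it has already fixed $x_{(\idl{P}{x},\idl{D}{x})}\in X(D_x)$ (note $\colim(\idl{D}{x})=D_x$). A \emph{fresh} element of $X(D_x)$ produced by the weak-pullback property of $g$ at $\alpha_x$ need not agree with this previously fixed value, and then $(x_\PD)_\PD$ is not a cocone and assembles into no map $C\to X$ at all. The paper's proof is structured precisely to avoid this: a genuine lifting against the full map is performed \emph{only} when $P$ has a greatest element $x$, in which case every strict subcomplex factors through $(\sidl{P}{x},\sidl{D}{x})$ and a single lifting square secures all compatibilities; when $P$ has no greatest element the value of $\kappa_\PD$ is not chosen but \emph{forced}, by writing $P=U\cup V$ as a union of two strict sub-sieves and gluing $\kappa_{(U,D_U)}$ and $\kappa_{(V,D_V)}$ along the pushout supplied by Lemma~\ref{lem:sieve-pushout} and the colimit decomposition formula (Theorem~\ref{thm:cdm}); and even then compatibility with an \emph{arbitrary} sub-sieve $W\subseteq P$ must still be verified, via a second pushout decomposition over $U\cap W$, $V\cap W$, $U\cap V\cap W$. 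That verification, which you explicitly defer as a ``secondary induction,'' is the technical heart of the theorem rather than a routine check. Finally, your parenthetical alternative---realizing $0\to C$ as a transfinite composite of pushouts of generating extensions---is not an equivalent rephrasing: $\FCC$ is a genuinely non-linear directed poset, left classes of weak factorization systems are not closed under colimits of this shape, and you exhibit no linearization; producing the factorization \emph{without} linearizing, so that the index category stays filtered (which is exactly what Corollary~\ref{cor:zext-flat} needs), is the point of the \emph{fat} small object argument.
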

\begin{proof}
To see that $C\to 1$ is full, let $e:I\codito J$ be \codisplay in $\coclanc$ and
let $f:H(I)\to C$. Since $\FC(\coclanc)$ is filtered and $H(I)$ is compact, $f$
factors through a colimit inclusion as \begin{equation*}
f\;=\;\bigl(H(I)\xrightarrow{H(g)}H(\colim(D))\xrightarrow{\sigma_\PD}C\bigr)
\end{equation*}
for some finite complex $\PD$. We form the pushout
\tikzsetnextfilename{MLoZOdts}
\begin{equation*}\begin{mytikzcd}
    I
        \ar[r,"g"]
        \ar[d,"e"',{Triangle[open,reversed]->}]
\pnc	\colim(D)
        \ar[d,"k",{Triangle[open,reversed]->}]
\\	J
        \ar[r,""]
\pnc	|[label={[overlay, label distance=-3mm]135:\ulcorner}]| K
\end{mytikzcd}\end{equation*}
and extend the finite complex $\PD$ to $(P\star 1,D\star k)$ where $P\star 1$ is
the {join} of $P$ and $1$, and $D\star k:P\star 1 \to \coclanc$ is the 
diagram extending $D$ with the cell-attachment
$k:\colim(D)\codito K$. Then $K=\colim(D\star k)$ and $k$ is the image of the
inclusion $\PD\incl(P\star 1,D\star k)$ of finite complexes under the colimit
functor~\eqref{eq:colim-fun}, thus we obtain an extension of $f$ along $H(e)$ as
in the following diagram. 
\tikzsetnextfilename{phkHjiTY}
\begin{equation*}\begin{mytikzcd}
    H(I)
        \ar[r,"H(g)"']
        \ar[d,"H(e)"',{Triangle[open,reversed]->}]
        \ar[rr, bend left = 17, "f" description]
\pnc	H(\colim(D))
        \ar[d,{Triangle[open,reversed]->},"H(k)"]
        \ar[r,"\sigma_\PD"']
\pnc   C
\\  H(J)
        \ar[r,""]
\pnc   |[label={[overlay, label distance=-3mm]135:\ulcorner}]| H(K)
        \ar[ur,"\sigma_{(P\star 1,D\star k)}"', bend right = 17]
\end{mytikzcd}\end{equation*}

To see that $C$ is a \z-extension, consider a full map $f:Y\fullmap X$ in
$\copalg$ and an arrow $h:C\to X$. To show that $h$ lifts along $f$ we construct
a lift of the cocone
\begin{equation*}
\left(H(\colim(D))\xrightarrow{\sigma_\PD}C\xrightarrow{h}X\right)_{\PD\in\FCC}
\end{equation*}
by induction over the preorder $\FC(\coclanc)$ which is well-founded since every
\fcc has only finitely many subcomplexes. Given a finite complex $(D,P)$ it is
sufficient to exhibit a lift $\kappa_\PD:H(\colim (D))\to Y$ satisfying 
\begin{align}
\quad&f\circ\kappa_\PD=h\circ\sigma_\PD&&\text{and}\label{eq:fkappahsigma}\\ 
\quad&\kappa_\PD\circ H(\colim j)= \kappa_\QE 
    &&\text{for all subcomplexes }j:\QE\to\PD\label{eq:kappahcolimkappa},
\end{align}
where we may assume that the $\kappa_\QE$ satisfy the analogous equations by
induction hypothesis. 
We distinguish two cases: 

1. If $P$ has a greatest element $x$ then we can take
$\kappa_\PD$ to be a lift in the square 
\tikzsetnextfilename{xlgeLlVc}
\begin{equation*}\begin{mytikzcd}
    H(\colim (\sidl D x))
        \ar[rr,"\kappa_{(\sidl P x, \sidl D x)}"]
        \ar[d,""]
\pnc\pnc	Y
        \ar[d,"f"]
\\	H(D_x)
        \ar[r,"\sigma_\PD"']
        \ar[rru,dashed,"\kappa_\PD" description]
\pnc   C   \ar[r,"h"']
\pnc	X
\end{mytikzcd}\end{equation*}
whose left side is an extension by Lemma~\ref{lem:fcomp-col-ex} and whose right
side is full by assumption. Then \eqref{eq:fkappahsigma} holds by construction,
and \eqref{eq:kappahcolimkappa} holds for all subcomplexes since it holds for
the largest strict subcomplex $(\sidl P x,\sidl D x)\to\PD$.

2. If $P$ \emph{doesn't} have a greatest element we can write $P=U\cup V$ as
union of two strict sub-sieves, whence we have pushouts 
\tikzsetnextfilename{sdOMglbF}
\begin{equation*}
\begin{mytikzcd}[sep = small]
U\cap V
        \ar[r]
        \ar[d]
\pnc   V
        \ar[d]
\\  U
        \ar[r]
\pnc   |[label={[overlay, label distance=-3mm]135:\ulcorner}]| P
\end{mytikzcd}
\qquad\qtext{and}\qquad
\tikzsetnextfilename{abHzgDyV}
\begin{mytikzcd}[sep = small]
\colim(D_{U\cap V})
        \ar[r]
        \ar[d]
\pnc   \colim(D_V)
        \ar[d]
\\  \colim(D_U)
        \ar[r]
\pnc   |[label={[overlay, label distance=-3mm]135:\ulcorner}]| \colim(D)
\end{mytikzcd}
\end{equation*}
by Lemma~\ref{lem:sieve-pushout} and the CDF. This means that condition
\eqref{eq:kappahcolimkappa} forces us to define $\kappa_\PD$ to be the unique
arrow fitting into
\begin{equation}\label{eq:kpd}
\tikzsetnextfilename{CAjSuDlV}
\begin{mytikzcd}
    H(\colim(D_{U\cap V}))
        \ar[r,"\phi^{U\cap V}_V\,"]
        \ar[d,"\phi^{U\cap V}_U"']
\pnc   H(\colim(D_{V}))
        \ar[d,"\phi^V_P"]
        \ar[rdd, bend left=20,"\kappa_{(V,D_V)}" description]
\\  H(\colim(D_{U}))
        \ar[r,"\phi^U_P"]
        \ar[rrd, bend right=10,"\kappa_{(U,D_U)}" description]
\pnc   |[label={[overlay, label distance=-3mm]135:\ulcorner}]|H(\colim(D))
        \ar[rd,"{\kappa_\PD}" description,dashed]
\\[-20pt]\pnc\pnc[+20pt]
    Y
\end{mytikzcd},\end{equation}
where for the remainder of the proof we write $\phi^X_W:H(\colim(D_X))\to
H(\colim(D_W))$ for the canonical arrows induced by successive sieve inclusions
$X\subs W\subs P$. Using the fact that the $\phi^U_P$ and $\phi^V_P$ are jointly
epic it is easy to see that the $\kappa_\PD$ defined in this way satisfies
condition \eqref{eq:fkappahsigma}, and it remains to show that
\eqref{eq:kappahcolimkappa} is satisfied for arbitrary sieves $W\subs P$, \ie
$\kappa_\PD\circ \phi^W_P = \kappa_{(W,D_W)}:H(\colim(D_W))\to Y$. Since 
\tikzsetnextfilename{YuLeQuei}
\begin{equation*}\begin{mytikzcd}[column sep = large]
    H(\colim(D_{U\cap V\cap W}))
        \ar[r,"\phi^{U\cap V\cap W}_{V\cap W}\,"]
        \ar[d,"\phi^{U\cap V\cap W}_{U\cap W}"']
\pnc	H(\colim(D_{V\cap W}))
        \ar[d,"\phi^{V\cap W}_W"]
\\	H(\colim(D_{U\cap W}))
        \ar[r,"\phi^{U\cap W}_W"]
\pnc	|[label={[overlay, label distance=-3mm]135:\ulcorner}]| H(\colim(D_W))
\end{mytikzcd}\end{equation*}
is a pushout it is enough to verify this equation after precomposing with
$\phi^{U\cap W}_W$ and $\phi^{V\cap W}_W$. We have
\begin{align*}
\kappa_\PD\circ\phi^W_P\circ\phi^{U\cap W}_W
&=\kappa_\PD\circ\phi^U_P\circ\phi^{U\cap W}_U &&\text{by functoriality}\\
&=\kappa_{(U,D_U)}\circ\phi^{U\cap W}_U &&\text{by \eqref{eq:kpd}}\\
&= \kappa_{(U\cap W,D_{U\cap W})} && \text{by \eqref{eq:kappahcolimkappa}}\\
&=\kappa_{(W,D_W)}\circ\phi^{U\cap W}_W&& \text{by \eqref{eq:kappahcolimkappa}}
\end{align*}
and the case with $\phi^{V\cap W}_W$ is analogous.
\end{proof}

\begin{corollary}\label{cor:zext-flat} For any clan $\clant$, the \z-extensions
in $\tmod$ are flat.
\end{corollary}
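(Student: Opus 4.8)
The plan is to deduce the corollary from Theorem~\ref{thm:ext-full-fact} by a retract argument, using the slice presentation of Proposition~\ref{prop:slice-tmod} to relativize the construction to an arbitrary model. First I would observe that the object $C=\colim_{\PD\in\FCC}H(\colim(D))$ produced by Theorem~\ref{thm:ext-full-fact} is not merely a \zext with $C\to 1$ full, but is automatically \emph{flat}: by Lemma~\ref{lem:slat} the index category $\FCC$ is an essentially small preorder with finite joins, hence filtered, so $C$ is a filtered colimit of $\hom$-models and thus flat by Lemma~\ref{lem:flat-filtered}.

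Next, given an arbitrary model $A$ of $\clant$, I would apply this to the coclan $\coclanc=\El(A)$, which is a coclan by Proposition~\ref{prop:slice-tmod}. Transporting the resulting flat \zext along the equivalence $\El(A)\op\hmod\simeq\tmod/A$ of Proposition~\ref{prop:slice-tmod}---which matches extensions with extensions and full maps with full maps, and sends the terminal object to $\id_A$ and the initial object to $0\to A$---yields an object $c:\overline A\to A$ of $\tmod/A$ whose domain $\overline A$ is a \zext in $\tmod$ and whose underlying map $c$ is full. Since the forgetful functor $\tmod/A\to\tmod$ creates connected, in particular filtered, colimits and sends the representables of $\El(A)\op\hmod$ to $\hom$-models, the image $\overline A$ of $C$ is again a filtered colimit of $\hom$-models \emph{in} $\tmod$, hence flat by Lemma~\ref{lem:flat-filtered}. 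In other words, every model $A$ admits a full map $c:\overline A\fullmap A$ from a flat \zext.

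Finally I would specialize to the case where $A$ is itself a \zext, so that $0\to A$ is an extension and therefore has the left lifting property against the full map $c$. Solving the lifting problem
\begin{equation*}\begin{tikzcd}
    0 \ar[r] \ar[d]
\pnc   \overline A \ar[d,"c",two heads]
\\  A \ar[r,"\id_A"'] \ar[ru,dashed,"s"]
\pnc   A
\end{tikzcd}\end{equation*}
produces a section $s:A\to\overline A$ with $c\circ s=\id_A$, exhibiting $A$ as a retract of the flat object $\overline A$. The proof concludes by noting that flatness is closed under retracts: using the characterization of flatness as preservation of finite limits by the realization functor (the discussion preceding Definition~\ref{def:flat-model}), the functor $A\reali$ is a retract, in the relevant functor category, of the finite-limit-preserving functor $\overline A\reali$, so each of its finite-limit comparison maps is a retract of an isomorphism and hence an isomorphism; thus $A$ is flat.

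The main obstacle I anticipate is the bookkeeping in the second step: verifying that the equivalence of Proposition~\ref{prop:slice-tmod} really does carry the specific colimit object of Theorem~\ref{thm:ext-full-fact} to a filtered colimit of $\hom$-models in $\tmod$ rather than merely in the slice. This is precisely where the preservation of filtered colimits by the forgetful functor $\tmod/A\to\tmod$, together with the identification of the representables of the clan $\El(A)\op$ with $\hom$-models, has to be invoked carefully; everything else is a formal lifting-and-retract argument.
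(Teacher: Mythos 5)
Your proposal is correct and takes essentially the same route as the paper's own proof: apply Theorem~\ref{thm:ext-full-fact} in the slice over the given \zext via the presentation of Proposition~\ref{prop:slice-tmod}, transport the filtered colimit of $\hom$-models along the colimit-creating forgetful functor $\tmod/A\to\tmod$ to get a full map from a flat \zext, extract a section by lifting $\id_A$ against it, and conclude by retract-closure of flatness. The only differences are cosmetic: you relativize to an arbitrary model $A$ before specializing to a \zext, and you spell out the retract-closure of flatness (via realization functors preserving finite limits) which the paper leaves implicit.
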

\begin{proof}
Let $E\in\tmod$ be a \z-extension. By applying Theorem~\ref{thm:ext-full-fact}
in $\tmod/E$ (using Proposition~\ref{prop:slice-tmod}), we obtain a full map
$f:F\epi E$ where $F$ is a \z-extension and $f$ is a filtered colimit of arrows
$H(\Gamma)\to E$ in $\tmod/A$. Since $\tmod/A\to\tmod$ creates colimits this
means that $F$ is a filtered colimit of representable models in $\tmod$, and therefore
flat (Lemma~\ref{lem:flat-filtered}). Since $f$ is a full map into a
\z-extension it has a section, thus $E$ is a retract of $F$ and therefore flat
as well.
\end{proof}

\nocite{leena2021dependent}

\ifpublish
\bibliographystyle{asl}
\else
\bibliographystyle{alpha}
\fi

\end{document}